\newtheorem{theorem}{Theorem}[section]
\newtheorem{corollary}[theorem]{Corollary}
\newtheorem{lemma}[theorem]{Lemma}
\newtheorem{proposition}[theorem]{Proposition}
\theoremstyle{definition}
\newtheorem{remark}[theorem]{Remark}
\newtheorem{question}[theorem]{Question}
\def\Z{\mathbb{Z}}
\def\Q{\mathbb{Q}}
\def\R{\mathbb{R}}
\def\P{\mathbb{P}}
\def\N{\mathbb{N}}
\def\Z{\mathbb{Z}}
\def\calA{\mathcal{A}}
\def\calB{\mathcal{B}}
\def\calF{\mathcal{F}}
\def\calK{\mathcal{K}}
\def\calL{\mathcal{L}}
\def\calM{\mathcal{M}}
\def\calO{\mathcal{O}}
\newcommand{\IP}{I\! P}
\newcommand{\excise}[1]{}
\newcommand{\abs}[1]{\left\lvert#1\right\rvert}
\renewcommand{\and}{\qquad\text{and}\qquad}
\newcommand{\ch}{\textup{ch}}
\newcommand{\gr}{\operatorname{gr}}
\newcommand{\Hom}{\operatorname{Hom}}
\newcommand{\la}{\lambda}
\newcommand{\rk}{\operatorname{rk}}
\newcommand{\Snap}{\textup{Snap}}
\newcommand{\cA}{\mathcal{A}}
\newcommand{\WA}{W_{\!\cA}}
\newcommand{\Waug}{W^{\aug}_{\!\cA}}
\newcommand{\aug}{{\operatorname{aug}}}
\renewcommand{\and}{\qquad\text{and}\qquad}
\newcommand{\cJ}{\mathcal{J}}
\newcommand{\cI}{\mathcal{I}}
\newcommand{\cO}{\mathcal{O}}
\newcommand{\cL}{\mathcal{L}}
\newcommand{\cF}{\mathcal{F}}
\newcommand{\PhFY}{\Phi_{\operatorname{FY}}}
\newcommand{\Phsim}{\Phi_\nabla}
\newcommand{\PsFY}{\Psi_{\operatorname{FY}}}
\newcommand{\Pssim}{\Psi_\nabla}
\newcommand{\cU}{\mathcal{U}}
\newcommand{\FY}{\operatorname{FY}}
\newcommand{\Flag}{\operatorname{Flag}}
\title{$K$-rings of wonderful varieties and matroids}
\author[M. Larson]{Matt Larson}\address{Department of Mathematics, Stanford University, Stanford, CA 94305}
\email{\url{mwlarson@stanford.edu}}
\author[S. Li]{Shiyue Li}\address{Department of Mathematics, Brown University, Providence, RI 02906}
\email{\url{shiyue_li@brown.edu}}
\author[S. Payne]{Sam Payne}\address{Department of Mathematics, University of Texas, Austin, TX 78712}\email{\url{sampayne@utexas.edu}}
\author[N. Proudfoot]{Nicholas Proudfoot}\address{Department of Mathematics, University of Oregon, Eugene, OR 97403}
\email{\url{njp@uoregon.edu}}
\date{\today}
\begin{document}
\spacing{1.2}

\begin{abstract}
We study the $K$-ring of the wonderful variety of a hyperplane arrangement and give a combinatorial presentation that depends only on the underlying matroid. We use this combinatorial presentation to define the $K$-ring of an arbitrary loopless matroid. We construct an exceptional isomorphism, with integer coefficients, to the Chow ring of the matroid that satisfies a Hirzebruch--Riemann--Roch-type formula, generalizing a recent construction of Berget, Eur, Spink, and Tseng for the permutohedral variety (the wonderful variety of a Boolean arrangement).  As an application, we give combinatorial formulas for Euler characteristics of arbitrary line bundles on wonderful varieties. We give analogous constructions and results for augmented wonderful varieties, and for Deligne--Mumford--Knudsen moduli spaces of stable rational curves with marked points.
\end{abstract}

\renewcommand{\baselinestretch}{.95}\normalsize
\maketitle
\setcounter{tocdepth}{1}
\tableofcontents
\renewcommand{\baselinestretch}{1.2}\normalsize

\section{Introduction}
Let $L$ be a finite dimensional vector space over a field $\mathbb{F}$, and let $\cA = \{H_e\mid e\in E\}$ be a finite 
multiset of hyperplanes in $L$ intersecting only at the origin.  
The wonderful variety $\WA$ is a smooth compactification of $\mathbb{P}(L)\setminus\bigcup_{H\in\cA}\mathbb{P}(H)$,
originally studied by De Concini and Procesi \cite{dCP95}.  The augmented wonderful variety $\WA^{\aug}$ is a smooth 
compactification of $L$, introduced in \cite{BHMPW20a}, that contains $\WA$ as a divisor.
The Chow rings of these spaces have been extensively studied and have combinatorial presentations that depend only on the underlying matroid. As a result, such rings are naturally defined for arbitrary, not necessarily realizable, matroids.
Presentations for the Chow ring of $\WA$
appear in \cite{dCP95,FY,BES}, and this ring is used to prove log concavity
of the characteristic polynomial of $\cA$ (and more generally of any matroid) in \cite{AHK}.
A presentation of the Chow ring of $\Waug$ appears in \cite{BHMPW20a}, and this ring is used to prove the top-heavy conjecture and the nonnegativity of Kazhdan--Lusztig polynomials of matroids  \cite{BHMPW20b}.

Our goal is to study the Grothendieck rings of vector bundles $K(\WA)$ and $K(\Waug)$.  We first give presentations for the $K$-rings using generators analogous to the generators given by Feichtner--Yuzvinsky for the
Chow ring of $W_{\mathcal{A}}$.  
Despite the fact that the (non-homogeneous) relations among the Feichtner--Yuzvinsky generators in the $K$-ring are different from the (homogeneous) relations
among the Feichtner--Yuzvinsky generators in the Chow ring, we
construct 
an integral isomorphism from the $K$-ring to the Chow ring which satisfies a Hirzebruch--Riemann--Roch-type formula, generalizing recent results for Boolean
arrangements \cite{BEST,EurHuhLarson}.  The $K$-rings admit additional structures,
such as Adams operations, an Euler characteristic map, and Serre duality, which leads us to new results and new questions.
Most of our results can be extended to the moduli space $\overline{\mathcal{M}}_{0,n}$ of stable rational curves with $n$ 
marked points, which is closely related to the wonderful variety for the braid arrangement $\mathcal{B}_{n-1}$. 

\subsection{Definitions of the varieties}
For any $S\subset E$, let 
$$L_S \coloneqq \bigcap_{e\in S} H_e \and L^S \coloneqq L/L_S.$$
The dimension of $L^S$ is called the {\bf rank} of $S$, and the dimension of $L_S$ is called the {\bf corank}.
A subset $F\subset E$ is called a {\bf flat} if it is maximal within its rank, or, equivalently, if $L_F \subset H_e$ implies that $e\in F$.

The {\bf wonderful variety} $\WA$ is defined as the closure of the image of the rational map
$$\mathbb{P}(L) \dasharrow \prod_{F \not= \emptyset} \mathbb{P}(L^F),$$
and the {\bf augmented wonderful variety} $\Waug$ is defined as the closure of the image of the rational map
$$\mathbb{P}(L\oplus \mathbb{F}) \dasharrow \prod_{F \not= \emptyset} \mathbb{P}(L^F\oplus\mathbb{F}),$$
where both products are over the set of nonempty flats.
For any nonempty flat $F$, let $$\pi_F \colon \WA\to \mathbb{P}(L^F)\and\pi^\aug_F \colon \Waug\to \mathbb{P}(L^F\oplus\mathbb{F})$$
be the natural projections.  We will write $$\cL_F \coloneqq \pi_F^*\cO_{\mathbb{P}(L^F)}(1)\and \cL_F^\aug \coloneqq (\pi_F^\aug)^*\cO_{\mathbb{P}(L^F\oplus\,\mathbb{F})}(1).$$

The map $\pi_{E} \colon \WA\to \mathbb{P}(L)$ is an iterated blow-up. It is obtained by blowing up first the points $\mathbb{P}(L_F)$ for all corank 1 flats $F$, then the strict transforms of the lines $\mathbb{P}(L_F)$ for all corank 2 flats $F$, and so on. For any nonempty proper flat $F$, let $D_F\subset\WA$ be the closure of the preimage under the map $\pi_E$ of the locus 
$$\mathbb{P}(L_F) \setminus \bigcup_{F\subsetneq G \not= E} \mathbb{P}(L_G).$$
Similarly, the map $\pi^\aug_{E} \colon \Waug\to \mathbb{P}(L\oplus\mathbb{F})$
is an iterated blow-up, obtained by blowing up first the points $\mathbb{P}(L_F\oplus\{0\})$ for all corank 1 flats $F$, then the strict transforms of the lines $\mathbb{P}(L_F\oplus\{0\})$ for all corank 2
flats $F$, and so on.  For any proper flat $F$, let $D_F^\aug\subset\Waug$ be the 
closure of the preimage under the map $\pi_E^\aug$ of the locus 
$$\mathbb{P}(L_F\oplus \{0\}) \setminus \bigcup_{F\subsetneq G \not= E} \mathbb{P}(L_G\oplus\{0\}).$$
In addition, for any $e\in E$, let $D_e^\aug\subset \Waug$ be the strict transform of $\mathbb{P}(H_e\oplus\mathbb{F})$.
We have a canonical isomorphism $\WA\cong D_{\varnothing}^\aug\subset \Waug$,
which induces identifications $\cL_F = \cL_F^\aug|_{\WA}$ and $D_F = D_F^\aug\cap \WA$ for all nonempty proper $F$.

\subsection{Feichtner--Yuzvinsky presentations}\label{sec:intro-toric}
Let $A(\WA)$ denote the Chow ring of cycles modulo rational equivalence on $\WA$, see \cite{Fulton}. Similarly, let $A(\Waug)$ denote the Chow ring of $\Waug$. Let
$T_\cA \coloneqq \Z[x_F\mid \text{$F$ a flat}] \otimes \Z[y_e\mid e\in E].$
Consider the map $$\PhFY^\aug\colon T_\cA \to A(\Waug)$$ that sends $x_F$ to $[D_F^\aug]$ for all proper $F$, $x_E$ to $-c_1(\cL_E^\aug)$, and 
$y_e$ to $[D_e^\aug]$ for all $e\in E$.  Composing with the pullback along the inclusion $\WA\subset\Waug$, we obtain a map 
$$\PhFY\colon T_\cA\to A(\WA)$$ that
sends $x_F$ to $[D_F]$ for all nonempty proper $F$ and $y_e$ to zero. The maps $\PhFY$ and $\PhFY^\aug$ are surjective, and their kernels were explicitly described in \cite{FY,BHMPW20a} in terms of the flats of $\mathcal{A}$ (Theorem~\ref{FYChow}).

Next, 
consider the map $$\PsFY^\aug\colon T_\cA \to K(\Waug)$$ that sends $x_F$ to $[\cO_{D_F^\aug}]$ for all proper $F$, $x_E$ to 
$1 - [\cL_E^\aug]$, 
and $y_e$ to $[\cO_{D_e^\aug}]$
for all $e\in E$.  Composing with the pullback along the inclusion $\WA\subset\Waug$, we obtain a map $$\PsFY\colon T_\cA\to K(\WA)$$ that
sends $x_F$ to $[\cO_{D_F}]$ for all nonempty proper $F$ and $y_e$ to zero. We show that the maps $\PsFY$ and $\PsFY^{\aug}$ are surjective and give an explicit description of the kernels in terms of the flats of $\mathcal{A}$ (Theorem~\ref{FYK}). The relations generating the kernels of $\PsFY$ and $\PsFY^{\aug}$ are ``inhomogeneous versions'' of the relations generating the kernels of $\PhFY$ and $\PhFY^{\aug}$.

\subsection{Exceptional isomorphisms and simplicial presentations}\label{sec:intro-simplicial}
For any nonempty flat $F$, we define the following Chow and $K$-classes:
\begin{equation*}
\begin{aligned}
h_F &\coloneqq c_1(\cL_F)\in A(\WA) & \quad \quad 
h_F^\aug &\coloneqq c_1(\cL_F^\aug)\in A(\Waug)\\
\eta_F &\coloneqq 1 - [\cL_F^{-1}] \in K(\WA) & 
\eta_F^\aug &\coloneqq 1 - [(\cL_F^\aug)^{-1}]\in K(\Waug).
\end{aligned}
\end{equation*}

\begin{remark}
The Chow classes $\{h_F\}$ and $\{h_F^\aug\}$ may be interpreted in terms of divisors on the permutohedral or stellahedral  variety coming from simplices and are therefore known as {\bf simplicial generators}.  In the non-augmented setting, these classes were studied in \cite{YuzvinskySimplicial,BES}, and the definitions and basic properties immediately generalize to the augmented setting. See \cite[Section 3.2]{BES} for more details.  We will similarly refer to $\{\eta_F\}$ and $\{\eta_F^\aug\}$
as the simplicial generators of the $K$-ring.  The statement that they generate their respective $K$-rings is true but not obvious.
\end{remark}

Our next result
says that there are isomorphisms from the $K$-ring to the Chow ring of the varieties $\WA$ and $\Waug$ that satisfy  analogues of the Hirzebruch--Riemann--Roch formula, 
in which $\frac{1}{1-h_E}$ and $\frac{1}{1-h_E^\aug}$ play the roles of the respective Todd classes.
The special case in which $\cA$ is the Boolean arrangement (i.e., the number of hyperplanes is equal to the dimension of $L$)
appeared in \cite[Theorem D]{BEST} in the non-augmented setting and in \cite[Theorem 1.8]{EurHuhLarson} in the augmented setting.  Our techniques give new proofs of these results.

\begin{theorem}\label{exceptional}
There are isomorphisms
$\zeta_\cA\colon K(\WA)\to A(\WA)$ and $\zeta_\cA^\aug\colon K(\Waug)\to A(\Waug)$
characterized by the property that $\zeta_\cA(\eta_F) = h_F$ and 
$\zeta_\cA^\aug(\eta^\aug_F) = h^\aug_F.$
For any classes $\xi\in K(\WA)$ and $\xi^\aug\in K(\Waug)$, we have
$$\chi(\WA, \xi) = \deg_{\WA}\left(\frac{\zeta_\cA(\xi)}{1-h_E}\right) \and \chi(\Waug, \xi^\aug) = \deg_{\Waug}\left(\frac{\zeta_\cA^\aug(\xi^\aug)}{1-h_E^\aug}\right).$$
\end{theorem}

\begin{remark}
The isomorphisms of Theorem \ref{exceptional} are not related to the Chern character homomorphisms; 
in particular, they do not coincide with the respective Chern characters after tensoring with the rational numbers.
If we replaced $\zeta_\cA$ (respectively $\zeta_\cA^\aug$) with the Chern character, 
a similar formula would hold with $\frac{1}{1-h_E}$ (respectively $\frac{1}{1-h_E^\aug}$)
replaced by the Todd class.
\end{remark}

Let $S_\cA \coloneqq \Z[u_F \mid \text{$F$ a nonempty flat}\},$ and
consider the map $$\Phsim^\aug\colon S_\cA \to A(\Waug)$$ that sends $u_F$ to $h^\aug_F$ for all $F$.  Composing with the pullback along the inclusion $\WA\subset\Waug$, we obtain a map $$\Phsim\colon S_\cA\to A(\WA)$$ that
sends $u_F$ to $h_F$ for all $F$. The maps $\Phsim$ and $\Phsim^{\aug}$ are surjective, and the ideal of relations can be computed explicitly in terms of the arrangement (Theorem~\ref{Amatroid}). This is called the \textbf{simplicial presentation} of $A(M)$ and $A^{\aug}(M)$.

For the $K$-theoretic analogue of the simplicial generators, consider the map 
$$\Pssim^\aug\colon S_\cA \to K(\Waug)$$ that sends $u_F$ to $\eta^\aug_F$ for all $F$.  
Composing with the pullback along the inclusion $\WA\subset\Waug$, we obtain a map $$\Pssim\colon S_\cA\to K(\WA)$$ that
sends $u_F$ to $\eta_F$ for all $F$.
Theorems \ref{exceptional} immediately implies that the kernel of $\Pssim$ is the same as the kernel of $\Phsim$, and that the kernel of $\Pssim^{\aug}$ is the same as the kernel of $\Phsim^{\aug}$. This kernel is computed explicitly in Theorem~\ref{Amatroid}, giving what we refer to as the \textbf{simplicial presentation} of $K(M)$ and $K^{\aug}(M)$.

\subsection{The $K$-ring of $\overline{\mathcal{M}}_{0,n}$}\label{ssec:m0n}
\label{sec:intro-m0n}
Our results generalize to $\overline{\calM}_{0, n}$, the Deligne--Mumford--Knudsen compactification of the moduli space of stable rational curves with $n$ marked points, for $n \ge 3$. For $i \in \{1, \dotsc, n\}$, there is a line bundle $\mathbb{L}_i$ on $\overline{\mathcal{M}}_{0,n}$ whose fiber over a point is the cotangent space of the $i$th marked point in the corresponding curve. The first Chern class of $\mathbb{L}_i$ is denoted $\psi_i$. 

Kapranov \cite{Kapranov} showed that each $\mathbb{L}_i$ is a base-point-free line bundle whose complete linear system induces a birational map $\overline{\mathcal{M}}_{0,n} \to \mathbb{P}^{n-3}$. For every subset $S$ of $[n] \coloneqq \{1, \dotsc, n\}$ of size at least $3$, we have a forgetful map $f_S \colon \overline{\mathcal{M}}_{0,n} \to \overline{\mathcal{M}}_{0,S} = \overline{\mathcal{M}}_{0,|S|}$. We therefore obtain a map
$$\overline{\mathcal{M}}_{0,n} \to \prod_{S \subset [n-1], \, |S| \ge 2} \mathbb{P}^{|S| - 2}$$
by composing $f_{S \cup n}$ with the map induced by the complete linear system of $\mathbb{L}_n$ on $\overline{\mathcal{M}}_{0,S \cup n}$. This map is a closed embedding.

Consider the braid arrangement $\mathcal{B}_{n-1}$ in $\mathbb{F}^{n -1}/\mathbb{F} \cdot (1, \dotsc, 1)$, whose hyperplanes are normal to $e_i - e_j$ for $i<j \in [n-1]$. The lattice of flats of $\mathcal{B}_{n-1}$ may be identified with the collection of partitions of the set $[n-1]$.  For any subset $S\subset[n-1]$ of cardinality at least 2,
let $F_S$ denote the flat corresponding to the partition of $[n-1]$ into $S$ along with a bunch of singletons.  
Projecting onto the factors indexed by flats of $\mathcal{B}_{n-1}$ of the form $F_S$ for some $S$, we have a map
$$W_{\mathcal{B}_{n-1}} \to \prod_{S \subset [n-1], \,  |S| \ge 2} \mathbb{P}(L^{F_S})$$
whose image is $\overline{\mathcal{M}}_{0,n}$ under the embedding described previously \cite[Section 4.3]{dCP95}. The relation between $\overline{\mathcal{M}}_{0,n}$ and $W_{\mathcal{B}_{n-1}}$ allows us to deduce results about $\overline{\mathcal{M}}_{0,n}$ from our study of wonderful varieties.

Let $\cL_S = f_{S \cup n}^* \mathbb{L}_n$.  This bundle is trivial when $|S|=2$, and the bundles corresponding to sets of cardinality at least 3 form a basis for the Picard group of $\overline{\calM}_{0, n}$. Let $$c_S \coloneqq c_1(\cL_S) = f_{S \cup n}^* \psi_n \in A^1(\overline{\calM}_{0, n}).$$

\begin{theorem} \label{thm:mzeron}
There is an isomorphism $\zeta_n\colon K(\overline{\mathcal{M}}_{0,n}) \to A(\overline{\mathcal{M}}_{0,n})$
that sends 
$1- [\cL_S^{-1}]$ to $c_S$.
For any $\xi\in K(\overline{\mathcal{M}}_{0,n})$, the Euler characteristic of $\xi$ is equal to the degree of 
$\frac{\zeta_n(\xi)}{1-c_{[n-1]}}$.
\end{theorem}

\subsection{Matroids}\label{sec:intro-matroids}
The presentations of $A(\WA)$ and $A(\Waug)$ depend only on the matroid associated with $\cA$, which
led to the definitions of Chow rings and augmented Chow rings of arbitrary 
loopless matroids \cite{FY,BHMPW20a}.
Similarly, our descriptions of $K(\WA)$ and $K(\Waug)$, along with their 
Euler characteristic functionals,
only depend on the matroid associated with $\cA$. With the exception of the latter part of Section \ref{sec:FY Snap}, we will always assume that all matroids are loopless.

Let $M$ be a matroid on the ground set $E$.  Let $\Sigma_M$ be the {\bf Bergman fan} of $M$, which has rays $\{\rho_F\mid\text{$F$ a nonempty proper flat}\}$ \cite{ardila2006bergman}.
Let $\Sigma^\aug_M$ be the
{\bf augmented Bergman fan} \cite{BHMPW20a} of $M$, which has rays
$\{\rho_F^\aug\mid \text{$F$ a proper flat}\}\cup\{\rho_e^\aug\mid e\in E\}$.  These two fans are related by the fact
that $\Sigma_M$ is isomorphic to the star of the ray $\rho_\varnothing$ in $\Sigma_M^\aug$ 
\cite[Proposition 2.7(2)]{BHMPW20a}.
The rings $A(M)$ and $A^\aug(M)$ are defined to be the Chow rings of the toric varieties
$X_{\Sigma_M}$ and $X_{\Sigma^\aug_M}$, respectively.  

If $M$ is the matroid associated with a hyperplane arrangement $\cA$, then, after choosing a linear functional defining each hyperplane in $\mathcal{A}$, we obtain canonical inclusions
$\WA\subset X_{\Sigma_M}$ and  $\Waug\subset X_{\Sigma^\aug_M}$
with the properties that $D_F = \WA \cap D_{\rho_F}$ for any nonempty proper flat $F$,
$D_F^\aug = \Waug\cap D_{\rho_F^\aug}$ for any proper flat $F$, and $D_e^\aug = \Waug\cap D_{\rho_e^\aug}$ for any $e\in E$.
The restriction maps $A(M) \to  A(\WA)$ and $A^\aug(M) \to A(\Waug)$ are both isomorphisms \cite{FY,BHMPW20a}.

Consider the polynomial ring
$T_M \coloneqq \Z[x_F\mid \text{$F$ a flat}] \otimes \Z[y_e\mid e\in E]$
along with the homomorphism $\PhFY^\aug\colon T_M\to A^\aug(M)$ taking $x_F$ to $[D_{\rho_F^\aug}]$ for all proper flats $F$,
$x_E$ to $-\sum_{F\neq E}[D_{\rho_F^\aug}]$, and
$y_e$ to $[D_{\rho_e^\aug}]$.
Composing with the pullback along the inclusion
$X_{\Sigma_M} \cong D_{\rho_\varnothing^\aug}\subset X_{\Sigma_M^\aug}$,
we obtain a homomorphism $\PhFY\colon T_M\to A(M)$ that sends $x_F$ to $D_{\rho_F}$ for all nonempty proper $F$ and $y_e$ to zero.
Consider also the polynomial ring $S_M \coloneqq \Z[u_F\mid \text{$F$ a nonempty flat}].$
We define $\Phsim\colon S_M\to A(M)$ and  $\Phsim^\aug\colon S_M\to A^\aug(M)$
via the formulas
\begin{equation}\label{add}\Phsim(u_F) \coloneqq -\sum_{F\subset G}\PhFY(x_G)\and \Phsim^\aug(u_F) \coloneqq -\sum_{F\subset G}\PhFY^\aug(x_G).\end{equation}

\begin{remark}\label{Asame}
When $M$ is the matroid associated with 
the arrangement $\cA$, the following diagrams (along with their augmented analogues) commute, thus justifying our repeated use
of the notation $\PhFY$ and $\Phsim$:
\[
\begin{tikzcd}
    T_M \ar[r, "="] \ar[d, "\PhFY"] & T_\cA \ar[d, "\PhFY"] && S_M \ar[r, "="] \ar[d, "\Phsim"] & S_\cA \ar[d, "\Phsim"] \\
    A(M) \ar[r, "\cong"] & A(\WA) && A(M) \ar[r, "\cong"] & A(\WA).
\end{tikzcd}
\]
Commutativity of the first diagram is immediate from the definitions, while commutativity of the second diagram 
is proved by reducing to the case of the Boolean matroid \cite[Section 3.2]{BES}.
\end{remark}

We define $K(M)$ and $K^\aug(M)$ to be the Grothendieck rings of vector bundles on $X_{\Sigma_M}$ and $X_{\Sigma_{M}^\aug}$, respectively. Because $X_{\Sigma_M}$ and $X_{\Sigma_M^{\aug}}$ are smooth, these coincide with the Grothendieck groups of coherent sheaves. 

\begin{proposition}\label{K is combinatorial}
If $M$ is the matroid associated with $\cA$, then the inclusions $\WA\subset X_{\Sigma_M}$ and $\Waug\subset X_{\Sigma_{M}^\aug}$ induce isomorphisms $K(M)\cong K(\WA)$ and $K^\aug(M)\cong K(\Waug)$.
\end{proposition}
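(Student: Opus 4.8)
The plan is to prove that the pullback $\iota^{*}\colon K(M)=K(X_{\Sigma_M})\to K(\WA)$ along the inclusion $\iota\colon \WA\hookrightarrow X_{\Sigma_M}$ is an isomorphism, the augmented inclusion $\Waug\hookrightarrow X_{\Sigma_M^{\aug}}$ being treated the same way. Since $\WA$ and $X_{\Sigma_M}$ are smooth, $\iota^{*}$ is a ring homomorphism, and I will prove it is an isomorphism by exhibiting it as a \emph{surjection of free abelian groups of the same finite rank}. So there are three things to verify: (i) $\iota^{*}$ is surjective; (ii) $K(\WA)$ and $K(M)$ are finitely generated free $\Z$-modules; and (iii) $\rk K(\WA)=\rk K(M)$.

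For (i), I use Theorem~\ref{FYK}(1), which says that $K(\WA)$ is generated as a ring by the classes $\PsFY(x_F)=[\cO_{D_F}]$ for nonempty proper flats $F$, together with the two line-bundle classes $\PsFY(x_\varnothing)$ and $\PsFY(x_E)$ (the variables $y_e$ map to $0$). Each of these lies in the image of the ring homomorphism $\iota^{*}$: since $D_{\rho_F}$ is a Cartier divisor not containing $\WA$ and $\WA\cap D_{\rho_F}=D_F$, we get $\iota^{*}[\cO_{D_{\rho_F}}]=[\cO_{D_F}]$; and the line bundles underlying $\PsFY(x_\varnothing)$ and $\PsFY(x_E)$ are restrictions to $\WA$ of torus-invariant line bundles on $X_{\Sigma_M}$, pulled back from $X_{\Sigma_M^{\aug}}$ under the identification $X_{\Sigma_M}\cong D_{\rho_\varnothing^{\aug}}$. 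As the image of a ring homomorphism is a subring containing a generating set, $\iota^{*}$ is surjective. (Alternatively, surjectivity follows without Theorem~\ref{FYK} from the De Concini--Procesi realization of $\WA$ as an iterated blow-up of $\P(L)$: the image of $K(\P(L))$ is generated by $1-[\cL_E]$, and the only new ring generators produced by the blow-ups are the structure sheaves of the exceptional divisors $D_F$, all of which extend over $X_{\Sigma_M}$.)

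For (ii), $\WA$ is obtained from $\P(L)$ by a sequence of blow-ups along smooth centers (the iterated strict transforms of the subspaces $\P(L_F)$), so repeated use of the blow-up formula $K_0(\Bl_Z X)\cong K_0(X)\oplus K_0(Z)^{\oplus(\codim Z-1)}$---starting from the free group $K_0(\P(L))$ and noting that every center blown up is again of this form---shows that $K(\WA)$ is free of finite rank. On the other side, $\Sigma_M$ is a subfan of the permutohedral fan on $|E|$ elements, so $X_{\Sigma_M}$ is an open subvariety of the smooth projective permutohedral variety, hence a smooth quasi-projective toric variety; and $K_0$ of a smooth toric variety is a finitely generated free abelian group---for instance because smooth toric varieties are linear varieties, or by applying the localization sequence to the closed complement of $X_{\Sigma_M}$ in the permutohedral variety, whose strata are products of smaller permutohedral varieties. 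For (iii), the Chern character gives an isomorphism $K(X)\otimes\Q\cong A(X)\otimes\Q$ for any smooth variety $X$ with finitely generated Chow groups, so $\rk K(M)=\rk A(M)$ and $\rk K(\WA)=\rk A(\WA)$; since the restriction map $A(M)\to A(\WA)$ is an isomorphism (recalled in the introduction), these ranks coincide. A surjection of free $\Z$-modules of the same finite rank is an isomorphism, so $\iota^{*}$ is an isomorphism. The augmented statement is proved identically, using that $\Sigma_M^{\aug}$ is a subfan of the stellahedral fan, that $\Waug$ is an iterated blow-up of $\P(L\oplus\mathbb{F})$ along smooth centers, and that the restriction map $A^{\aug}(M)\to A(\Waug)$ is an isomorphism.

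The step I expect to require the most care is the freeness in (ii) for the non-complete toric variety $X_{\Sigma_M}$, namely producing a self-contained argument that $K_0(X_{\Sigma_M})$ is free of the expected rank rather than invoking a black box about $K$-theory of toric varieties; the remaining steps follow formally from facts recalled in the introduction. An alternative organization would instead compute a combinatorial presentation of $K(M)$ directly from the fan $\Sigma_M$ and match it with the presentation of Theorem~\ref{FYK} under $\iota^{*}$, but this still ultimately rests on the same finiteness and freeness input.
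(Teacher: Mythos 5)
Your overall strategy (surjection of free abelian groups of equal finite rank) is genuinely different from the paper's, which instead observes that $A(M)$, being torsion-free, is the associated graded of $K(M)$ for the coniveau filtration, that the same holds for $A(\WA)$ and $K(\WA)$, and that a filtered map inducing an isomorphism on associated gradeds is an isomorphism; since $A(M)\to A(\WA)$ is already known to be an isomorphism, the $K$-theory statement follows with no separate surjectivity or rank argument. Your steps (ii) and (iii) are fine and essentially reproduce the content of the paper's Lemma~\ref{tf} together with the known isomorphism on Chow rings.

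The problem is step (i). Invoking Theorem~\ref{FYK}(1) here is circular in the paper's logical architecture: Theorem~\ref{FYK} is proved using Lemma~\ref{key} (generation of $K(\WA)$ by line bundles), and Lemma~\ref{key} is itself deduced \emph{from} Proposition~\ref{K is combinatorial} by transporting the corresponding statement for the smooth toric variety $X_{\Sigma_M}$. More intrinsically, any proof of surjectivity of $\iota^{*}$ must show that $K(\WA)$ is generated by classes extending over $X_{\Sigma_M}$, which is essentially the content of Lemma~\ref{key}; you cannot get this for free from results stated earlier in the paper. Your parenthetical alternative --- an induction on the De Concini--Procesi blow-up description, showing at each stage that the new $K$-classes are generated by structure sheaves of exceptional divisors together with pullbacks --- is the right way to break the circle, but as written it is only a sketch: you need $K$ of each blow-up center (which is itself a product of smaller wonderful varieties) to be generated by restrictions from the ambient variety, and this requires its own induction. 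Either flesh out that argument or replace step (i) by the filtered-module argument; as it stands the main line of your proof does not close.
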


Let $\cL_E^\aug$ be the line bundle on $X_{\Sigma_{M}^\aug}$ whose first Chern class is equal to $\sum_{F\neq E}[D_{\rho_F^\aug}]$.
We define the map
$\PsFY^\aug\colon T_M\to K^\aug(M)$
by sending $x_F$ to $[\cO_{D_{\rho_F^\aug}}]$ for all proper $F$, $x_E$ to $1-[\cL_E^\aug]$,
and $y_e$ to $[\cO_{D_{\rho_e^\aug}}]$ for all $e\in E$.
Composing with the pullback along the inclusion
$X_{\Sigma_M} \cong D_{\rho_\varnothing^\aug}\subset X_{\Sigma_M^\aug}$,
we obtain a homomorphism $\PsFY\colon T_M\to K(M)$ that sends $x_F$ to $[\cO_{D_{\rho_F}}]$ for all nonempty proper $F$ and $y_e$ to zero. We compute the kernel of $\PsFY$ and $\PsFY^{\aug}$ (Theorem~\ref{Kmatroid}), generalizing Theorem~\ref{FYK} and giving a presentation for $K(M)$ and $K^{\aug}(M)$. 

\begin{remark}\label{Ksame}
When $M$ is the matroid associated with 
the arrangement $\cA$, the following diagram commutes (as does its augmented analogue), thus justifying the repeated use
of the notation $\PsFY$:
\[
\begin{tikzcd}
    T_M \ar[r, "="] \ar[d, "\PsFY"] & T_\cA \ar[d, "\PsFY"] \\
    K(M) \ar[r, "\cong"] & K(\WA).
\end{tikzcd}
\]
\end{remark}

We also have the following extension of Theorem~\ref{exceptional} to matroids. 
\begin{theorem}\label{thm:exceptionalmatroid}
There exist isomorphisms $\zeta_M \colon K(M) \to A(M)$ and $\zeta^{\aug}_M \colon K^{\aug}(M) \to A^{\aug}(M)$, characterized by the property that
$$\zeta_M^{-1}(h_F) = 1 - \prod_{F\subset G} \left(1-\PsFY(x_F)\right)^{-1} \and (\zeta_M^{\aug})^{-1}(h_F^{\aug}) = 1 - \prod_{F\subset G} \left(1-\PsFY^\aug(x_F)\right)^{-1}.$$
\end{theorem}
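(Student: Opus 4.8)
The plan is to deduce the existence of $\zeta_M$ and $\zeta_M^\aug$ from the Feichtner--Yuzvinsky presentations of Theorems~\ref{Amatroid} and~\ref{Kmatroid} by passing to the simplicial generators, where the defining ideals of the $K$-ring and the Chow ring \emph{literally coincide}. For each nonempty flat $F$ set $\eta_F := 1 - \prod_{F\subseteq G}\bigl(1-\PsFY(x_G)\bigr)^{-1}\in K(M)$ and $\eta_F^\aug := 1 - \prod_{F\subseteq G}\bigl(1-\PsFY^\aug(x_G)\bigr)^{-1}\in K^\aug(M)$; these are well defined because each factor $1-\PsFY(x_G)$, being the class of a line bundle, is a unit. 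Define $\Pssim\colon S_M\to K(M)$ by $u_F\mapsto\eta_F$ and $\Pssim^\aug\colon S_M\to K^\aug(M)$ by $u_F\mapsto\eta_F^\aug$. I will show that $\Pssim$ is surjective with kernel exactly $\cJ_1+\cJ_2$ and that $\Pssim^\aug$ is surjective with kernel exactly $\cJ_1+\cJ_2^\aug$. Granting this, Theorem~\ref{Amatroid}(3),(4) identifies $S_M/(\cJ_1+\cJ_2)\cong A(M)$ and $S_M/(\cJ_1+\cJ_2^\aug)\cong A^\aug(M)$, so we may let $\zeta_M$ (resp.\ $\zeta_M^\aug$) be the resulting isomorphism $K(M)\xrightarrow{\ \sim\ }A(M)$ (resp.\ $K^\aug(M)\xrightarrow{\ \sim\ }A^\aug(M)$). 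By construction $\zeta_M(\eta_F)=\Phsim(u_F)=h_F$ and $\zeta_M^\aug(\eta_F^\aug)=h_F^\aug$, and since the $\eta_F$ generate $K(M)$ this determines $\zeta_M$ uniquely, which is the asserted characterization.

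The first task is to check $\cJ_1+\cJ_2\subseteq\ker\Pssim$ and $\cJ_1+\cJ_2^\aug\subseteq\ker\Pssim^\aug$, a direct computation inside the presentation of Theorem~\ref{Kmatroid}. For a flat $F$ with $\rk F=1$ and any $e\in F$, a flat $G$ contains $e$ if and only if $G\supseteq F$, so $\cI_2'$ together with $\cI_4$ forces $\prod_{F\not\subseteq G}(1-\PsFY(x_G))=1$; combined with $\cI_1'$ this gives $\prod_{F\subseteq G}(1-\PsFY(x_G))=1$, i.e.\ $\Pssim(u_F)=\eta_F=0$, handling $\cJ_2$. For $\cJ_1$, put $H=F\vee G$; since $\{G'\colon F\subseteq G'\}\setminus\{G'\colon H\subseteq G'\}=\{G'\colon F\subseteq G',\ G\not\subseteq G'\}=:P$, one factors $\eta_F-\eta_H=(\text{unit})\cdot\bigl(1-\prod_{K\in P}(1-\PsFY(x_K))^{-1}\bigr)$, and symmetrically $\eta_G-\eta_H=(\text{unit})\cdot\bigl(1-\prod_{K\in Q}(1-\PsFY(x_K))^{-1}\bigr)$ with $Q=\{G'\colon G\subseteq G',\ F\not\subseteq G'\}$. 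Every member of $P$ is incomparable to every member of $Q$, so expanding the geometric series writes $(\eta_F-\eta_H)(\eta_G-\eta_H)$ as a $K(M)$-linear combination of products $\PsFY(x_K)\PsFY(x_{K'})$ with $K\in P$, $K'\in Q$, each of which vanishes by $\cI_3$. In the augmented setting one computes instead that $\eta_F^\aug=\PsFY^\aug(y_e)$ whenever $\rk F=1$ and $e\in F$, and then the relations $(\eta_F^\aug)^2=0$ and $\eta_F^\aug(\eta_G^\aug-\eta_{F\vee G}^\aug)=0$ follow from $\cI_2'$ and the observation that $e\notin G'$ whenever $F\not\subseteq G'$, now using $\cI_4^\aug$ in place of $\cI_4$.

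Next, $\Pssim$ and $\Pssim^\aug$ are surjective. By Theorem~\ref{Kmatroid}, $K(M)$ is generated as a ring by the classes $\PsFY(x_F)$; each of these lies in the first step $F^1$ of the coniveau filtration, which is a nilpotent ideal, so each $1-\PsFY(x_F)$ is a unit with inverse $\sum_{k\ge0}\PsFY(x_F)^k$ (a finite sum). A downward induction on flats (beginning with $F=E$), using $\eta_F=1-(1-\PsFY(x_F))^{-1}\prod_{F\subsetneq G}(1-\PsFY(x_G))^{-1}$, shows that every $\PsFY(x_F)$ with $F\neq\varnothing$ lies in the subring generated by the $\eta_G$, and finally $\PsFY(x_\varnothing)$ is recovered from $\cI_1'$. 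The augmented case is identical.

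It remains to upgrade the surjection $S_M/(\cJ_1+\cJ_2)\twoheadrightarrow K(M)$ to an isomorphism. The source is isomorphic to $A(M)$ (Theorem~\ref{Amatroid}(3)), a free $\Z$-module of some finite rank $r$, and $K(M)$ is also free of rank $r$: by Theorem~\ref{Kmatroid} the lowest-degree parts of the defining relations of $K(M)$ are the defining relations of $A(M)$, so the associated graded of the degree filtration is $\gr K(M)\cong A(M)$, and the ranks agree since the Chern character gives $K(M)\otimes\Q\cong A(M)\otimes\Q$. A surjective homomorphism of free $\Z$-modules of the same finite rank is an isomorphism; the same reasoning applies to $\Pssim^\aug$ using Theorem~\ref{Amatroid}(4). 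I expect the main obstacle to be the second paragraph --- verifying that the inhomogeneous change of generators from the Feichtner--Yuzvinsky classes to the classes $\eta_F$ carries $\cI_1'+\cI_2'+\cI_3+\cI_4$ exactly onto the homogeneous ideal $\cJ_1+\cJ_2$ (and its augmented counterpart) --- together with the care needed to conclude integrally rather than merely after tensoring with $\Q$.
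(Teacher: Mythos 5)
Your proposal is correct and follows essentially the same route as the paper: define the candidate map on $S_M$ by $u_F\mapsto 1-\prod_{F\subset G}(1-\PsFY(x_G))^{-1}$, verify using the Feichtner--Yuzvinsky presentation of Theorem~\ref{Kmatroid} that $\cJ_1+\cJ_2$ (resp.\ $\cJ_1+\cJ_2^\aug$) dies, establish surjectivity, and conclude integrally by comparing ranks of free abelian groups via Lemma~\ref{tf}. The only cosmetic difference is that you obtain surjectivity by a downward induction on flats where the paper uses M\"obius inversion.
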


Motivated by the above characterization of $\zeta_{\mathcal{A}}$ and $\zeta_{\mathcal{A}}^{\aug}$, we define maps $\Pssim\colon S_M\to K(M)$ and $\Pssim^\aug\colon S_M\to K^\aug(M)$ by the formulas
\begin{equation}\label{mult}\Pssim(u_F) = 1 - \prod_{F\subset G} \left(1-\PsFY(x_G)\right)^{-1} \and
\Pssim^\aug(u_F) = 1 - \prod_{F\subset G} \left(1-\PsFY^\aug(x_G)\right)^{-1}.\end{equation}
Then Theorem~\ref{thm:exceptionalmatroid} implies that the kernels of $\Pssim$ and $\Pssim^{\aug}$ are the same as the kernels of $\Phsim$ and $\Phsim^{\aug}$, which is computed by Theorem~\ref{Amatroid}. This gives a presentation of $K(M)$ and $K^{\aug}(M)$, which we call the \textbf{simplicial presentation}. 

\begin{remark}
Let $M$ be the matroid associated with an arrangement $\cA$.
Identifying $K(M)$ with $K(\WA)$ via the isomorphism in Proposition~\ref{K is combinatorial}, we have constructed two isomorphisms $\zeta_M, \zeta_{\mathcal{A}} \colon K(M) \to A(M)$. Proposition~\ref{prop:commute} implies that these two isomorphisms coincide, as do the analogous pair of isomorphisms in the augmented setting.
Similarly, we have constructed two maps $\Pssim \colon S_M \to K(M)$, one defined geometrically in Section \ref{sec:intro-simplicial}, and the other defined
algebraically in Equation \eqref{mult}. Proposition~\ref{prop:commute} also implies that these maps coincide, as do the analogous pair of maps in the augmented setting.
\end{remark}



The Chow rings $A(M)$ and $A^\aug(M)$ satisfy {\bf Poincar\'e duality}, meaning that there are maps $$\deg_M\colon A(M)\to\Z\and \deg^\aug_M\colon A^\aug(M)\to\Z$$
such that 
the pairing $A(M)\otimes A(M)\to \Z$ taking $f\otimes g$ to $\deg_M(fg)$ is perfect \cite[Theorem 6.19]{AHK},
and similarly in the augmented setting \cite[Theorem 1.3(4)]{BHMPW20a}. Since the toric varieties $X_{\Sigma_M}$ and $X_{\Sigma^{\aug}_M}$ are usually not proper, one cannot define an Euler characteristic map on $K(M)$ by pushing forward to a point. 
We define maps $$\chi\colon K(M)\to\Z\and \chi^\aug\colon K^{\aug}(M)\to\Z$$
by putting $$\chi(M, \xi)\coloneqq \deg_M\left(\frac{\zeta_M(\xi)}{1-h_E}\right) \and \chi^\aug(M, \xi^\aug)\coloneqq \deg_M^\aug\left(\frac{\zeta_M^\aug(\xi^\aug)}{1-h_E^\aug}\right).$$
Then the analogous pairing $K(M)\otimes K(M)\to \Z$ taking $\xi\otimes \eta$ to $\chi(M, \xi\eta)$ is perfect,
and similarly in the augmented setting.  For realizable matroids, this is a general property 
of the Euler pairing on the $K$-theory of a smooth proper linear variety \cite[Theorem 1.3]{anderson2015operational}.

\begin{remark}
When $M$ is not realizable, the maps $\chi$ and $\chi^\aug$ still have a natural geometric interpretation as Euler characteristics
on the wonderful variety (respectively augmented wonderful variety)
of the Boolean arrangement.  See Proposition \ref{prop:compatibility}.
\end{remark}

Although the $K$-rings $K(M)$ and $K^\aug(M)$ are isomorphic to their respective Chow rings,
the fact that they are $K$-rings endow them with several additional structures, including the structure of augmented $\la$-rings
(see Section~\ref{ssec:structure} for details). Furthermore, when $M$ is realizable, Serre duality gives a nontrivial identity satisfied by $\chi$ and $\chi^\aug$. In Theorem~\ref{thm:SD}, we show that that this identity extends to all matroids.

\subsection{Applications to Euler characteristics}\label{sec:euler-and-snap}
Much is known about intersection theory on (augmented) wonderful varieties. The Hirzebruch--Riemann--Roch-type formulas in Theorem~\ref{exceptional} allow us to transfer intersection-theoretic computations to $K$-theory. As applications, we give formulas for the Euler characteristic of the class of any line bundle in $K(M)$ expressed in terms of  the bundles $\cL_F$ (Theorem \ref{thm:euler-simplicial-matroid} and Corollary \ref{cor:snapper-simplicial-matroid}), and similarly in the augmented setting.
We also give a formula for the Euler characteristic of any line bundle expressed in terms of the 
Feichtner--Yuzvinsky generators of $K(M)$ (Theorem \ref{thm:snapper-FY-wonderful}), extending as a corollary 
Eur's formula \cite{Eur20} for the degrees of monomials in the Feichtner--Yuzvinsky generators of $A(M)$ (Corollary \ref{cor:volum-FY}).
Likewise, we give a formula for the Euler characteristic of any line bundle on $\overline{\mathcal{M}}_{0,n}$ expressed in terms the line bundles $\{\mathcal{L}_S\}$ (Theorem~\ref{thm:mzeron-snapper}). We also study the Euler characteristics of tensor products of the $\{\mathbb{L}_i\}$, giving a short proof of a theorem of Lee \cite{Lee} (Theorem~\ref{thm:psi-classes}).

\subsection{Structure of the paper}
In Sections~\ref{sec:FY} and \ref{sec:simplicial}, we compute a presentation of $K(\WA)$ and $K(\Waug)$ (Theorem \ref{FYK}) and prove
Theorem~\ref{exceptional}, assuming the following technical lemma:

\begin{lemma}\label{key}
The $K$-rings of $\WA$ and $\Waug$ are generated by the classes of line bundles.
\end{lemma}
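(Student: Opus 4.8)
The plan is to use the fact that both $\WA$ and $\Waug$ are iterated blow-ups of projective spaces along smooth centers, together with the blow-up formula for $K$-theory. Recall that if $\widetilde{Y} = \Bl_Z Y$ is the blow-up of a smooth variety $Y$ along a smooth closed subvariety $Z$, then $K(\widetilde{Y})$ is generated as a ring by the pullback of $K(Y)$ together with the classes $[\cO_E(k)]$ for the exceptional divisor $E$, where $\cO_E(1)$ is the tautological bundle of the projectivized normal bundle $E = \mathbb{P}(N_{Z/Y})$; equivalently, by $K(Y)$, the class $[\cO_{\widetilde Y}(-E)]$ of the line bundle associated to the exceptional divisor, and $K(E)$ pushed forward. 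Since $\mathbb{P}(L)$ has $K$-ring generated by line bundles (it is generated by $[\cO(1)]$), it suffices to show inductively that at each stage of the iterated blow-up the newly introduced classes are expressible in line bundles. The exceptional-divisor line bundles $[\cO_{\WA}(-D_F)]$ are manifestly classes of line bundles, so the only issue is the pushforward $K(E) \to K(\widetilde Y)$ of classes not coming from line bundles on the ambient space.

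First I would make the induction precise: order the proper nonempty flats of $M$ by increasing corank (equivalently by decreasing rank), and let $Y_0 = \mathbb{P}(L)$, with $Y_{i}$ obtained from $Y_{i-1}$ by blowing up the strict transform of $\mathbb{P}(L_{F_i})$, so that $\WA = Y_N$. At each step, the center $Z_{i-1} \subset Y_{i-1}$ is the strict transform of a linear subspace $\mathbb{P}(L_{F_i}) \cong \mathbb{P}^{d}$, and one checks (this is standard in the De Concini--Procesi picture) that $Z_{i-1}$ is itself an iterated blow-up of $\mathbb{P}^d$ along linear centers, namely the wonderful variety of the localization/restriction matroid at $F_i$. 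Hence by the same induction $K(Z_{i-1})$ is generated by line bundles. Now apply the projection formula: any class in the image of $K(E_i) \to K(Y_i)$, where $E_i = \mathbb{P}(N_{Z_{i-1}/Y_{i-1}})$, is a sum of terms $(\iota_{E_i})_* (\alpha \cdot [\cO_{E_i}(k)])$ with $\alpha$ pulled back from $Z_{i-1}$; and $(\iota_{E_i})_*(1) = 1 - [\cO_{Y_i}(-E_i)]$ is a difference of line bundle classes, while $[\cO_{E_i}(1)]$ extends to a line bundle on $Y_i$ (it is the restriction of $\cO_{Y_i}(-E_i)$ up to a twist by a bundle pulled back from $Z_{i-1}$, which by induction is a combination of line bundles). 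Using the projection formula to move everything out of the pushforward, every generator of $K(Y_i)$ is thereby written in terms of line bundles, completing the induction. The augmented case is identical, starting from $\mathbb{P}(L \oplus \mathbb{F})$ and blowing up strict transforms of the $\mathbb{P}(L_F \oplus \{0\})$.

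The main obstacle I anticipate is bookkeeping rather than conceptual: one must verify that the blow-up centers $Z_{i-1}$ genuinely are (augmented) wonderful varieties of smaller matroids so that the inductive hypothesis applies, and one must track how $[\cO_{E_i}(1)]$ relates to a globally defined line bundle on $Y_i$ — the standard identity is $\cO_{E_i}(1) \cong \cO_{Y_i}(-E_i)|_{E_i}$, but one should double-check signs and the precise normalization of the tautological bundle on the projectivized normal cone. An alternative, perhaps cleaner, route that avoids iterating the blow-up formula is to use the toric ambient spaces $X_{\Sigma_M}$ and $X_{\Sigma_M^\aug}$: the $K$-ring of a smooth toric variety is generated by line bundles (indeed by classes $[\cO(D_\rho)]$), and the restriction maps $K(X_{\Sigma_M}) \to K(\WA)$ and $K(X_{\Sigma_M^\aug}) \to K(\Waug)$ are surjective because $\WA \subset X_{\Sigma_M}$ meets every torus orbit properly and the localization sequences match up — this is essentially Proposition \ref{K is combinatorial}. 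If that surjectivity is available independently of Lemma \ref{key}, it gives the result in one line; otherwise the blow-up induction above is the safe approach.
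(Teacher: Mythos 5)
Your fallback route is the paper's actual proof: Proposition \ref{K is combinatorial} is established independently of Lemma \ref{key} (it does not use surjectivity of restriction at all, but rather the fact that $A(M)\to A(\WA)$ is an isomorphism together with the coniveau filtration, so that the filtered map $K(X_{\Sigma_M})\to K(\WA)$ is an isomorphism of filtered groups), and then the lemma follows in one line from the fact that the $K$-ring of a smooth toric variety is generated by line bundles \cite[Lemma 2.2]{anderson2015operational}. So the ``if that surjectivity is available'' caveat resolves in your favor, though the mechanism is the associated-graded argument rather than orbit-intersection or localization sequences. Your primary route via the iterated blow-up description and the $K$-theoretic blow-up formula is genuinely different and would also work, but the step you label as bookkeeping is the real content: to pull $\pi^*\alpha$ out of the pushforward $(\iota_{E_i})_*$ via the projection formula you need every generator of $K(Z_{i-1})$ to be the restriction of a class on $Y_{i-1}$ generated by line bundles, so the induction must carry the stronger hypothesis that $K$ of each center is generated by \emph{restrictions} of line bundles from the ambient variety (equivalently, one must check surjectivity of the relevant Picard restriction maps at every stage). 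This holds in the De Concini--Procesi setting but is exactly the kind of verification the toric argument sidesteps; what the blow-up route buys in exchange is independence from the toric embedding and hence applicability to spaces like $\overline{\mathcal{M}}_{0,n}$, which the paper handles by citing essentially this argument.
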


In Section~\ref{sec:m0n}, we discuss the presentation of $K(\overline{\mathcal{M}}_{0,n})$ and the exceptional isomorphism to $A(\overline{\mathcal{M}}_{0,n})$ in Theorem \ref{thm:mzeron}.
In Section~\ref{sec:matroidk}, we give a presentation for $K(M)$ and $K^{\aug}(M)$ (Theorem~\ref{Kmatroid}), and we derive Lemma~\ref{key}
as a corollary. 
We have chosen to order our presentation in this way, rather than beginning with arbitrary matroids, 
in order better to highlight the geometric ideas underlying the proofs of our main results.

Section \ref{ssec:structure} is devoted to a description of the Adams operations in $K(M)$ and $K^\aug(M)$ along with a combinatorial Serre duality theorem (Theorem \ref{thm:SD}).
Section \ref{sec:applications} is dedicated to computing Euler characteristics in the simplicial generators, Section \ref{sec:FY Snap} to Euler characteristics in the Feichtner--Yuzvinsky generators, and Section \ref{mzeron snap} to Euler characteristics on $\overline{\mathcal{M}}_{0,n}$.
Finally, we include an appendix with a proof of the simplicial presentation for the (augmented) Chow ring of a matroid, which has not previously appeared in the literature.

We note that some of the proofs in Sections \ref{ssec:structure} and \ref{sec:FY Snap} proceed by first giving geometric proofs for realizable matroids, and then using the notion of
valuativity to extend the results to all matroids.  Thus, in these sections, separating the statements for hyperplane arrangements from the statements for matroids 
is absolutely essential.

\subsection{Acknowledgements}
We thank Chris Eur for explaining the simplicial generators of the augmented Chow ring to us, for suggesting that the simplicial generators should behave well with respect to the exceptional isomorphisms, and for other helpful conversations. We thank June Huh and Dhruv Ranganathan for helpful conversations. We thank David Speyer and the referee for helpful comments and corrections. 
This collaboration is supported by the NSF FRG grant DMS--2053261. The work of ML is also supported by an NDSEG fellowship, the work of SL is also supported by NSF DMS--1844768 and a Coline M. Makepeace Fellowship from Brown University, the work of SP is also supported by NSF DMS--2001502, and  the work of NP is also supported by NSF DMS--1954050.

\section{Feichtner--Yuzvinsky presentations} \label{sec:FY}
In this section, we analyze the Feichtner--Yuzvinsky presentation of $K(\WA)$ and $K(\Waug)$. We begin by recalling the Feichtner--Yuzvinsky presentation of $A(\WA)$ and $A(\Waug)$. 
We define the following ideals in $T_\cA$:
\begin{equation} \label{eq:Idef}
\begin{aligned}
\cI_1 &\coloneqq \Big\langle\sum_F x_F\Big\rangle\\
\cI_2 &\coloneqq \Big\langle y_e - \sum_{e\notin F} x_F\;\Big{|}\; e\in E\Big\rangle\\
\cI_3 &\coloneqq \langle x_Fx_G\mid \text{$F$ and $G$ incomparable}\rangle\\
\cI_4 &\coloneqq \langle y_e\mid e\in E\rangle\\
\cI_4^\aug &\coloneqq \langle y_ex_F\mid e\notin F\rangle.
\end{aligned}
\end{equation}

\begin{theorem}\label{FYChow}
Let $\cA$ be a hyperplane arrangement.
\begin{itemize}
\item[(1)]  
The map $\PhFY$ is surjective with kernel $\cI_1+\cI_2+\cI_3+\cI_4$ \cite[Corollary 2]{FY}.
\item[(2)]  
The map $\PhFY^\aug$ is surjective with kernel $\cI_1+\cI_2+\cI_3+\cI_4^\aug$ \cite[Remark 2.13]{BHMPW20a}.
\end{itemize}
\end{theorem}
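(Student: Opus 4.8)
The plan is to prove both parts simultaneously by induction along the iterated blow-up descriptions of $\WA$ and $\Waug$, using Keel's formula for the Chow ring of a blow-up along a smooth center. Recall that if $\widetilde{X}\to X$ is the blow-up of a smooth variety $X$ along a smooth closed subvariety $Z$, with exceptional divisor $E$, then
$$A(\widetilde{X})\;\cong\;A(X)[t]\,\big/\,\big(\,t\cdot\ker(A(X)\to A(Z))+(g(t))\,\big),$$
where $t\mapsto -[E]$ and $g(t)$ is any polynomial lifting the Chern polynomial of $N_{Z/X}$ along $A(X)\to A(Z)$. In particular $A(\widetilde{X})$ is generated by $A(X)$ together with $[E]$, so the surjectivity assertions will be automatic once we treat a base case and keep track of the generators.

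For part (1), I would take the base case to be the degenerate situation $\dim L\le 1$, where $\WA=\mathbb{P}(L)$ is a point and both sides are $\Z$. In general, order the proper nonempty flats $F_1,\dots,F_m$ by non-decreasing corank, set $X_0:=\mathbb{P}(L)$ and $X_i:=\Bl_{Z_i}X_{i-1}$, where $Z_i\subset X_{i-1}$ is the strict transform of $\mathbb{P}(L_{F_i})$, so that $X_m=\WA$; the new generator $x_{F_i}$ is identified with $[D_{F_i}]$. The key geometric input is that, at the stage it is blown up, $Z_i$ is isomorphic to the wonderful variety of the contraction $\cA/F_i$, a smaller arrangement whose flats are exactly the flats of $\cA$ containing $F_i$, so by the inductive hypothesis we know $A(Z_i)$ and the restriction map $A(X_{i-1})\to A(Z_i)$. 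If $G$ is incomparable to $F_i$ then, after the earlier blow-ups, $D_G$ and $Z_i$ are disjoint, so $x_G\in\ker(A(X_{i-1})\to A(Z_i))$; hence Keel's term $t\cdot\ker$ produces the incomparability relations $x_{F_i}x_G$ of $\cI_3$, and running over all $i$ produces all of $\cI_3$. The term $g_i(t)$ produces a further relation in $x_{F_i}$ of degree $\rk F_i$, and one must check that, modulo the linear relations $\cI_1$ and the substitution $\cI_2$ (together with $\cI_4$, i.e.\ $y_e\mapsto 0$, equivalently $\sum_{e\notin F}x_F=0$), these accumulated relations generate exactly $\cI_1+\cI_2+\cI_3+\cI_4$ after all $m$ blow-ups, identifying $A(\WA)$ with $T_\cA/(\cI_1+\cI_2+\cI_3+\cI_4)$.

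Part (2) runs the same induction starting from $\mathbb{P}(L\oplus\mathbb{F})$, with base case $A(\mathbb{P}(L\oplus\mathbb{F}))=\Z[x_E]/(x_E^{\dim L+1})$ and $x_E=-c_1(\cL_E^\aug)$, blowing up the strict transforms of $\mathbb{P}(L_F\oplus\{0\})$ over proper flats $F$ in non-decreasing corank; the new generators become the classes $[D_F^\aug]$. The one new feature is the divisors $D_e^\aug$, the strict transforms of $\mathbb{P}(H_e\oplus\mathbb{F})$, which are not exceptional and survive from $\mathbb{P}(L\oplus\mathbb{F})$: whenever $e\notin F$ one has $L_F\not\subset H_e$ (because $F$ is a flat), so the intersection $\mathbb{P}(L_F\oplus\{0\})\cap\mathbb{P}(H_e\oplus\mathbb{F})=\mathbb{P}((L_F\cap H_e)\oplus\{0\})$ is itself one of the blown-up centers (or empty), and the blow-ups separate the two strict transforms; this yields the relations $y_ex_F\in\cI_4^\aug$ in place of $\cI_4$. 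Taking $F=\varnothing$ in particular shows $D_e^\aug\cap\WA=\varnothing$, so restriction along $\WA=D_\varnothing^\aug\subset\Waug$ sends $\cI_4^\aug$ to $\cI_4$ and recovers (1), confirming the stated compatibility of the two presentations.

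I expect the main obstacle to lie in two pieces of combinatorial bookkeeping in the inductive step. First, one must correctly identify the strict transform $Z_i$ of $\mathbb{P}(L_{F_i})$ at the precise stage it is blown up, together with a description of $\ker(A(X_{i-1})\to A(Z_i))$ fine enough to see that Keel's term $t\cdot\ker$ contributes nothing beyond incomparability relations already present; this is where the structure of the intervals of flats above and below $F_i$, and the factorization of boundary divisors into smaller wonderful varieties, enter. Second, one must verify that the Chern-polynomial relation $g_i(-x_{F_i})$ at each stage is already a consequence of $\cI_1,\cI_2,\cI_3,\cI_4$ — equivalently, that the normal bundle of $Z_i$ in $X_{i-1}$, namely the normal bundle of $\mathbb{P}(L_{F_i})$ in $\mathbb{P}(L)$ modified by the earlier blow-ups, contributes no new relation. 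Both statements are ultimately about the poset of flats, and checking them in a way that manifestly depends only on the matroid (not on the realization $\cA$) is the real content; this is carried out in \cite{FY} and \cite{BHMPW20a}, which we invoke. (An alternative is to check directly that the $\cI_j$ lie in the kernel and that the $x_F$ span, and then match graded ranks using the additive decomposition of $A(\WA)$ coming from the blow-up stratification; the combinatorial content is the same.)
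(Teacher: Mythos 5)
The paper does not actually prove Theorem \ref{FYChow}: both parts are quoted from the literature (\cite[Corollary 2]{FY} and \cite[Remark 2.13]{BHMPW20a}), so there is no internal argument to compare yours against. Your outline is the classical De Concini--Procesi-style route: induct along the blow-up tower, apply Keel's formula at each stage, identify the center $Z_i$ at the moment it is blown up with the wonderful variety of the contraction, and check that the Chern-polynomial relations $g_i(-x_{F_i})$ are absorbed into $\cI_1+\cI_2+\cI_3+\cI_4$. The geometric assertions you make along the way (disjointness of $D_G$ from $Z_i$ for incomparable $G$, the identity $L_F\cap H_e = L_{\overline{F\cup e}}$ and the resulting separation of $D_e^\aug$ from $D_F^\aug$ when $e\notin F$, and the special case $D_e^\aug\cap D_\varnothing^\aug=\varnothing$) are all correct and are indeed used elsewhere in the paper. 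However, you end by invoking \cite{FY} and \cite{BHMPW20a} for exactly the two verifications that you yourself identify as ``the real content'' --- the description of $\ker(A(X_{i-1})\to A(Z_i))$ and the disposal of the normal-bundle relation --- so as written your proposal has the same logical status as the paper's treatment: a proof by citation, here wrapped in a plausible sketch. Two remarks if you want to make it self-contained. First, the published argument in \cite{FY} does not run the Keel induction; it is toric (the nested-set/Bergman fan), proving surjectivity and then matching an explicit monomial basis of $T_\cA/(\cI_1+\cI_2+\cI_3+\cI_4)$ against a known additive basis --- essentially your parenthetical ``alternative,'' which is arguably the cleaner route because it sidesteps the strict-transform bookkeeping entirely. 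Second, if you do pursue the blow-up route, the surjectivity claim is genuinely automatic as you say, but the kernel computation is not: showing that $t\cdot\ker(A(X_{i-1})\to A(Z_i))$ contributes nothing beyond $\cI_3$ requires knowing that this kernel is generated by the classes $x_G$ with $G$ incomparable to $F_i$ together with classes already accounted for, and that is a nontrivial induction in its own right.
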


\begin{remark}
The original Feichtner--Yuzvinsky presentation of $A(\WA)$ does not include the variable $x_\varnothing$.
Many later sources omit both $x_\varnothing$ and $x_E$, since the remaining $x_F$ are sufficient 
to generate and correspond to boundary divisors on $\WA$ (or correspond to rays of the Bergman fan \cite{ardila2006bergman} of the matroid represented by $\calA$).  In \cite{AHK},
$-\PhFY(x_E)$ is called $\alpha$ and $-\PhFY(x_\varnothing)$ is called $\beta$.  Similarly, the presentation of $A(\Waug)$ in 
\cite{BHMPW20a} does not include the variable $x_E$.
\end{remark}

We now describe the kernels of $\PsFY$ and $\PsFY^{\aug}$, giving a presentation for $K(\WA)$ and $K(\Waug)$. 
We define the following ideals in $T_\cA$:
\begin{equation} \label{eq:I'def}
\begin{aligned}
\cI_1' &\coloneqq \Big\langle 1 - \prod_F (1-x_F)\Big\rangle\\
\cI_2' &\coloneqq \Big\langle -(1-y_e) + \prod_{e\notin F} (1-x_F)\;\Big{|}\; e\in E\Big\rangle.
\end{aligned}
\end{equation}
Note that the generators of $\cI_1'$ and $\cI_2'$ are inhomogeneous, and their lowest order terms coincide with the generators of $\cI_1$ and $\cI_2$.
We give the following $K$-theoretic analogue of Theorem \ref{FYChow}.

\begin{theorem}\label{FYK}
Let $\cA$ be a hyperplane arrangement.
\begin{itemize}
\item[(1)]  
The map $\PsFY$ is surjective with kernel $\cI_1'+\cI'_2+\cI_3+\cI_4$.
\item[(2)]  
The map $\PsFY^\aug$ is surjective with kernel $\cI_1'+\cI'_2+\cI_3+\cI_4^\aug$.
\end{itemize}
\end{theorem}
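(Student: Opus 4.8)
The plan is to show $\PsFY$ and $\PsFY^{\aug}$ are surjective, that the ideals $\cI_1'+\cI_2'+\cI_3+\cI_4$ and $\cI_1'+\cI_2'+\cI_3+\cI_4^{\aug}$ lie in their kernels, and then to force these surjections to be isomorphisms by a rank count. I will describe the argument for $\WA$; the case of $\Waug$ is entirely parallel (with the classes $[\cO_{D_e^{\aug}}]$ playing an extra role and $\cI_4$ replaced by $\cI_4^{\aug}$), and in any event the statement for $\WA$ can be recovered from that for $\Waug$ by pulling back along $\WA = D_{\varnothing}^{\aug}\hookrightarrow\Waug$. Write $\cI' := \cI_1'+\cI_2'+\cI_3+\cI_4$ and let $\mathfrak m\subset T_{\cA}$ be the augmentation ideal generated by all $x_F$ and $y_e$.

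\textbf{The relations lie in the kernel.} For incomparable flats $F,G$ the divisors $D_F$ and $D_G$ are disjoint (the nested-set description of the boundary of $\WA$), so $[\cO_{D_F}]\cdot[\cO_{D_G}] = [\cO_{D_F\cap D_G}] = 0$; this gives $\cI_3\subseteq\ker\PsFY$, and the same disjointness for $D_e^{\aug}$ and $D_F^{\aug}$ when $e\notin F$ gives $\cI_4^{\aug}\subseteq\ker\PsFY^{\aug}$. The inclusion $\cI_4\subseteq\ker\PsFY$ is immediate since $\PsFY(y_e)=0$. For $\cI_1'$ and $\cI_2'$ use that $\WA$ is smooth, so $\Pic(\WA)=A^1(\WA)$: the degree-one parts of the generators of $\cI_1'$ and $\cI_2'$ are the generators of $\cI_1$ and $\cI_2$, which vanish in $A^1$ by Theorem \ref{FYChow}, hence lift to isomorphisms of line bundles $\cL_E^{\aug}\cong\cO\big(\sum_{F\neq E}D_F^{\aug}\big)$ and $\cO(D_e^{\aug})\cong\cO\big(\sum_{e\notin F}D_F^{\aug}\big)$ on $\Waug$ (these can also be read off directly by computing total transforms of hyperplanes under the iterated blow-up). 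Since $1-[\cO_D]=[\cO(-D)]$ and tensor product of line bundles is multiplication in $K$-theory, these isomorphisms translate into the vanishing of the images of the generators of $\cI_1',\cI_2'$. Thus $\PsFY$ factors through a surjection $T_{\cA}/\cI'\to K(\WA)$ once surjectivity is known.

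\textbf{Surjectivity and the rank count.} The map $\pi_E\colon\WA\to\mathbb P(L)$ is a composition of blow-ups along smooth centers, the successive exceptional divisors being the $D_F$, and these centers are themselves iterated blow-ups of projective spaces along linear subspaces. Feeding this into the $K$-theory blow-up formula (Thomason; SGA 6), and using that $K$ of a projective space is generated by its hyperplane class, one gets that $K(\WA)$ is generated as a ring by the $[\cO_{D_F}]$ together with line bundles pulled back from the factors $\mathbb P(L^F)$; by the previous paragraph all those line bundles already lie in the subring generated by the $[\cO_{D_F}]$ and $[\cL_E]$ (indeed each $1-\PsFY(x_F)$ is a unit there, with inverse $\prod_{G\neq F}(1-\PsFY(x_G))$), so $\PsFY$ is surjective. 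Comparing the $K$-theoretic and Chow-theoretic blow-up formulas along the same tower, one also obtains that $K(\WA)$ is a free $\Z$-module of rank $N := \rk_{\Z} A(\WA)$ and that the cycle class map $A^{\bullet}(\WA)\to\gr^{\bullet}_F K(\WA)$ for the codimension filtration $F^{\bullet}$ is an isomorphism (equivalently, $\WA$ has an affine paving).

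\textbf{Injectivity (the crux).} It remains to prove $T_{\cA}/\cI'\to K(\WA)$ is injective. Passing to associated graded rings — the $\mathfrak m$-adic filtration on the source, $F^{\bullet}$ on the target — one uses that the lowest-degree part of each generator of $\cI'$ is a generator of $\cI_1+\cI_2+\cI_3+\cI_4$, so the initial ideal of $\cI'$ contains the Feichtner--Yuzvinsky ideal and $\gr_{\mathfrak m}(T_{\cA}/\cI')$ is a quotient of $A(\WA)$; checking on generators, the composite $A(\WA)\twoheadrightarrow\gr_{\mathfrak m}(T_{\cA}/\cI')\to\gr_F K(\WA)\cong A(\WA)$ is the identity, so $\gr_{\mathfrak m}(T_{\cA}/\cI')\cong A(\WA)$. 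Formally this already gives $K(\WA)\cong (T_{\cA}/\cI')\big/\bigcap_k\mathfrak m^k$, so the missing point is the $\mathfrak m$-adic separatedness of $T_{\cA}/\cI'$ — equivalently, that $T_{\cA}/\cI'$ is spanned over $\Z$ by the (finitely many) Feichtner--Yuzvinsky monomials forming the standard basis of $A(\WA)$. To see this one runs the straightening algorithm: eliminate the $y_e$ using $\cI_2'$, reduce to monomials supported on chains of flats using $\cI_3$, and bound exponents along a chain using the inhomogeneous analogues of the linear relations, which say exactly that each $1-x_F$ is invertible and that $\prod_{F\supseteq G}(1-x_F)=1$ for $G=\varnothing$ and for atoms $G$. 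The delicate part — and the main obstacle of the proof — is that rewriting with an inhomogeneous relation introduces correction terms in other degrees, so one must organize the reductions (exploiting that the relevant $1-x_F$ are units) so that the process provably terminates at the standard monomials. Granting this, $T_{\cA}/\cI'$ is generated by $N$ elements, so the composite $\Z^N\twoheadrightarrow T_{\cA}/\cI'\twoheadrightarrow K(\WA)\cong\Z^N$ is a surjective endomorphism of $\Z^N$, hence an isomorphism, which forces both surjections to be isomorphisms; therefore $\ker\PsFY=\cI'$, and the same argument with $\cI_4^{\aug}$ in place of $\cI_4$ yields $\ker\PsFY^{\aug}=\cI_1'+\cI_2'+\cI_3+\cI_4^{\aug}$.
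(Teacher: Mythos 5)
Your overall architecture is the same as the paper's: show the ideal lies in the kernel, establish surjectivity, then force the surjection $T_\cA/\cI' \to K(\WA)$ to be injective by comparing ranks of free abelian groups. Two of your local choices are fine variants. For $\cI_2'$ you argue that the degree-one identity in $A^1=\Pic$ lifts to an isomorphism of line bundles, hence an identity in $K$-theory; the paper instead verifies the relation by applying the Chern character and using its injectivity (Lemma \ref{tf}), and your version is if anything more direct. For surjectivity you invoke the blow-up formula along the tower $\pi_E:\WA\to\mathbb{P}(L)$, whereas the paper reduces to Lemma \ref{key} (generation of $K(\WA)$ by line bundles, proved later via the ambient toric variety) plus Lemma \ref{generation}; your route is plausible but only sketched, and tracking the ring (not just module) structure through iterated blow-ups takes some care. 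Also, your opening remark that statement (1) "can be recovered" from (2) by pulling back along $\WA=D^\aug_\varnothing\hookrightarrow\Waug$ is not justified as stated -- a presentation of $K(\Waug)$ does not automatically yield the kernel of the restriction to a divisor -- but this is harmless since you also run the parallel argument.

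The genuine gap is in your injectivity step, and you flag it yourself with "granting this." You correctly set up the surjection $A(\Waug)\to \gr R$ coming from the fact that the leading forms of the generators of $\cI_1',\cI_2'$ are the generators of $\cI_1,\cI_2$, but you then conclude that what is missing is $\mathfrak m$-adic separatedness of $T_\cA/\cI'$ and propose to establish it by a straightening algorithm whose termination you concede you cannot verify; as written, the proof is incomplete at exactly the point you identify as "the crux." The paper's argument goes straight from the surjection $A(\Waug)\to\gr R$ to the statement that $R$ is generated by $p=\rk_{\Z}A(\Waug)$ elements as an abelian group, and then any surjection $R\to K(\Waug)\cong\Z^p$ is an isomorphism; no standard-monomial basis, no explicit rewriting, and no appeal to completeness of the filtration is made. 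So the detour through straightening is both unnecessary for the paper's route and the place where your write-up stops short of a proof. (To your credit, the passage from "$\gr R$ is generated by $p$ elements" to "$R$ is generated by $p$ elements" does require knowing that the degree filtration on $R$ terminates, which is the same finiteness issue you isolate; but the fix is to supply that one step, not to produce a monomial basis.)
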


\begin{remark}
The difference between the presentations in Theorems \ref{FYChow} and \ref{FYK} is that the homogeneous ideals $\cI_1$ and $\cI_2$ are replaced by the inhomogeneous ideals 
$\cI_1'$ and $\cI_2'$.  Indeed, the Chow rings of these varieties are isomorphic to the associated graded of the coniveau
filtrations on the $K$-rings.
\end{remark}

Before proving Theorem \ref{FYK}, we give general lemmas about the $K$-ring of a general smooth variety $X$,
which we will also apply later in the proofs of Theorems \ref{exceptional} and the analogue of Theorem~\ref{FYK} for general matroids (Theorem~\ref{Kmatroid}).

\begin{lemma}\label{tf}
If $A(X)$ is a free abelian group of finite rank $p$, then so is $K(X)$, and
the Chern character $\ch\colon K(X)\to A(X)_\Q$ is injective.
\end{lemma}

\begin{proof}
The Chern character becomes an isomorphism after tensoring with $\Q$ \cite[Example 15.2.16]{Fulton},
so $K(X)$ has rank $p$.  There is a surjective map from $A(X)$ to the associated graded of $K(X)$ with respect
to the coniveau filtration \cite[Example 15.1.5]{Fulton}.  Since $A(X)$ is free of rank $p$ and $K(X)$ also has rank $p$,
this implies that that $K(X)$ is free.  Finally, the Chern character factors as $K(X)\to K(X)_\Q\to A(X)_\Q$, with the first
map being injective by freeness of $K(X)$ and the second map being an isomorphism, so the Chern character is injective.
\end{proof}

\begin{remark}\label{when it happens}
Lemma \ref{tf} will be applied to the varieties $\WA$, $X_{\Sigma_M}$ \cite[Corollary 1]{FY} and $\overline{\calM}_{0, n}$ \cite[Corollary 2]{FY}, 
as well as to $\Waug$ and $X_{\Sigma_M^\aug}$ \cite[Theorem 6.19]{AHK}. 
\end{remark}

\begin{lemma}\label{generation}
Let $X$ be a smooth variety, and suppose that $K(X)$ is generated as a ring by the classes
of line bundles.  Let $D_1,\ldots,D_k$ be divisors on $X$.  If $A^1(X)$ is generated as an abelian group by $[D_1],\ldots,[D_k]$,
then $K(X)$ is generated as a ring by $[\cO_{D_1}],\ldots,[\cO_{D_k}]$.
\end{lemma}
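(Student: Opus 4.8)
The plan is to show that $[\cO_X]$ together with the classes $[\cO_{D_1}], \dots, [\cO_{D_k}]$ generate $[\cL]$ for every line bundle $\cL$ on $X$, since these generate $K(X)$ as a ring by hypothesis. The key identity is that for an effective divisor $D$ one has $[\cO_X(-D)] = [\cO_X] - [\cO_D]$ in $K(X)$, coming from the short exact sequence $0 \to \cO_X(-D) \to \cO_X \to \cO_D \to 0$. So $[\cO_X(-D)]$ lies in the subring $R$ generated by $[\cO_X], [\cO_{D_1}], \dots, [\cO_{D_k}]$ whenever $D$ is effective and $[D]$ lies in the span of the $[D_i]$ — but I need this for $D$ effective, whereas a priori an arbitrary divisor class is only a $\Z$-linear combination of the $[D_i]$, not a nonnegative one.

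First I would address this by noting that $[\cO_X(D)]$ is a unit in $K(X)$ with inverse $[\cO_X(-D)]$, so it suffices to show $[\cO_X(D)] \in R$ for $D$ effective — then for $D$ effective, $[\cO_X(-D)] = [\cO_X(D)]^{-1}$, provided I first check that units of $R$ are closed under the relevant inversions, or more simply argue directly. Concretely: write an arbitrary line bundle as $\cL = \cO_X(\sum_i a_i D_i)$ with $a_i \in \Z$ (possible since the $[D_i]$ generate $A^1(X) = \Pic(X)$). Grouping positive and negative coefficients, $\cL = \cO_X(D^+) \otimes \cO_X(D^-)^{-1}$ with $D^+ = \sum_{a_i > 0} a_i D_i$ and $D^- = \sum_{a_i < 0} (-a_i) D_i$ both effective. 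So $[\cL] = [\cO_X(D^+)] \cdot [\cO_X(D^-)]^{-1}$, and it remains to show $[\cO_X(D)] \in R$ and that $[\cO_X(D)]^{-1} \in R$ for every effective $D$ supported on the $D_i$.

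For an effective divisor $D$ supported on $\bigcup D_i$, I would build up $[\cO_X(D)]$ and $[\cO_X(-D)]$ by induction on the total multiplicity $\sum a_i$ where $D = \sum a_i D_i$. The base case is $D = 0$, giving $[\cO_X] \in R$. For the inductive step, pick $i$ with $a_i > 0$ and write $D = D_i + D'$; then the exact sequence $0 \to \cO_X(-D_i) \to \cO_X \to \cO_{D_i} \to 0$, twisted by $\cO_X(D)$, reads $0 \to \cO_X(D') \to \cO_X(D) \to \cO_{D_i}(D) \to 0$. This shows $[\cO_X(D)] = [\cO_X(D')] + [\cO_{D_i}(D)]$. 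The first term is in $R$ by induction (smaller multiplicity). For the second term, $\cO_{D_i}(D)$ is the restriction to $D_i$ of a line bundle; its class in $K(D_i)$ is handled by pushing forward the identity in $K(X)$ — more precisely, by the projection formula $[\cO_{D_i}(D)] = [\cO_{D_i}] \cdot [\cO_X(D)]$ as classes pushed from $D_i$ to $X$, which is not quite what I want since $[\cO_X(D)]$ is what I am trying to reach. Here the hard part lies: I expect to need to peel off the $[\cO_{D_i}]$ factor and argue by a secondary induction, or instead to run the whole induction simultaneously for all twists $[\cO_{D_j}(D)]$ and $[\cO_X(D)]$ by the same multiplicity parameter. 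The cleanest route is probably to prove the stronger statement that for every effective $D$ supported on the $D_i$ and every $j$, both $[\cO_X(D)]$ and $[\cO_{D_j}(D)]$ lie in $R$, by simultaneous induction on $\sum a_i$; the exact sequence above handles $[\cO_X(D)]$ given the smaller cases of both types, and twisting $0 \to \cO_{D_j}(-D_i) \to \cO_{D_j} \to \cO_{D_i \cap D_j} \to 0$ appropriately (or using $[\cO_{D_j}(D)] = [\cO_{D_j}] \cdot [\cO_X(D)]$ directly once $[\cO_X(D)] \in R$) handles $[\cO_{D_j}(D)]$. Finally, since $[\cO_X(D)]$ is a unit with inverse $[\cO_X(-D)]$ and $-D$ is handled by the same argument after swapping signs, we get $[\cO_X(D)]^{-1} \in R$, completing the proof that $R = K(X)$.
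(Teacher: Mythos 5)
There is a genuine gap, and it sits exactly where you flag ``the hard part.'' Your induction gives $[\cO_X(D)] = [\cO_X(D')] + [\cO_{D_i}(D)]$ with $[\cO_{D_i}(D)] = [\cO_{D_i}]\cdot[\cO_X(D)]$ by the projection formula, i.e.\ $(1-[\cO_{D_i}])\,[\cO_X(D)] = [\cO_X(D')]$. Neither of your proposed fixes resolves this: the ``simultaneous induction'' does not lower the multiplicity parameter of $[\cO_{D_i}(D)]$, and using $[\cO_{D_j}(D)] = [\cO_{D_j}]\cdot[\cO_X(D)]$ ``once $[\cO_X(D)]\in R$'' presupposes the very membership you are trying to establish. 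The same issue reappears at the end: knowing $[\cO_X(-D)]\in R$ and that $[\cO_X(D)]$ is its inverse in $K(X)$ does not put $[\cO_X(D)]$ in $R$, because a subring is not closed under taking inverses. The missing ingredient is that $[\cO_{D_i}]$ lies in the first piece of the coniveau filtration on $K(X)$ and is therefore \emph{nilpotent}, so that
$$[\cO(D_i)] = \bigl(1-[\cO_{D_i}]\bigr)^{-1} = 1 + [\cO_{D_i}] + [\cO_{D_i}]^2 + \cdots$$
is a finite sum of elements of $R$. This is the one nontrivial observation in the paper's proof, and without it (or some substitute) your argument does not close.

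Once you have that observation, almost all of your scaffolding becomes unnecessary: since $[D_1],\ldots,[D_k]$ generate $A^1(X)=\Pic(X)$ additively, the line bundles $\cO(\pm D_1),\ldots,\cO(\pm D_k)$ generate the Picard group multiplicatively, so it suffices to show $[\cO(\pm D_i)]\in R$ for each single $i$. The exact sequence $0\to\cO(-D_i)\to\cO\to\cO_{D_i}\to 0$ gives $[\cO(-D_i)]=1-[\cO_{D_i}]\in R$, and the nilpotence argument gives $[\cO(D_i)]\in R$; arbitrary $\Z$-linear combinations of the $D_i$ are then handled by multiplicativity, with no induction on effective divisors and no need to discuss twists $[\cO_{D_j}(D)]$ at all.
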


\begin{proof}
Let $R$ be the subring of $K(X)$ generated by $[\cO_{D_1}],\ldots,[\cO_{D_k}]$.  We need to show that the class of
every line bundle is contained in $R$.  Since $[D_1],\ldots,[D_k]$ generate $A^1(X)$ as an abelian group, 
the line bundles $\cO(\pm D_1),\ldots,\cO(\pm D_k)$ generate the Picard group of $X$ under multiplication, so it will be sufficient to show that 
$[\cO(\pm D_i)]\in R$ for all $i$.

For any divisor $D$, we have an exact sequence $$0\to \cO(-D)\to \cO\to \cO_{D}\to 0,$$
which implies that \begin{equation*}\label{fundamental}[\cO(-D)] = [\cO] - [\cO_{D}] = 1 - [\cO_{D}].\end{equation*}
We also have
\begin{equation}\label{almost as fundamental}[\cO(D)] = [\cO(-D)]^{-1} = \frac{1}{1-[\cO_{D}]} = 1 + [\cO_{D}] + [\cO_{D}]^2 + \cdots.\end{equation}
Since $[\cO_{D}]$ lives in the first piece of the coniveau filtration on $K(X)$, it is nilpotent, so the sum terminates.
This allows us to conclude that both $[\cO(-D_i)]$ and $[\cO(D_i)]$ live in the ring $R$.
\end{proof}

\begin{proof}[Proof of Theorem \ref{FYK}.]
We begin with statement (2).  Surjectivity of the map $\PhFY^\aug$ follows from 
Theorem \ref{FYChow}, Lemma \ref{key}, and Lemma \ref{generation}.  
Next, we show that $\cI_1' + \cI_2' + \cI_3 + \cI_4^\aug$
is contained in the kernel of $\PhFY^\aug$.  The ideal $\cI_3$ is contained in the kernel because $D_F$ and $D_G$ are disjoint when $F$ and $G$
are incomparable.  Similarly, $\cI_4^\aug$ is contained in the kernel because $D_e$ is disjoint from $D_F$ whenever $e\notin F$.
To prove that $\cI_1'$ is contained in the kernel, we observe that its generator maps to
$$1 - [\cL_E^\aug]\prod_{F\neq E}\Big(1-[\cO_{D_F^\aug}]\Big) =
1 - [\cL_E^\aug]\prod_{F\neq E}[\cO(-D_F^\aug)].$$
Since $\cI_1$ is contained in the kernel of $\PhFY^\aug$, the line bundle
$$\cL_E^\aug\otimes\bigotimes_{F\neq E} \cO(-D_F^\aug)$$
is trivial, thus the generator of $\cI_1'$ is contained in the kernel of $\PsFY^\aug$.

To prove that $\cI_2'$ is contained in the kernel, Lemma \ref{tf} tells us that it is sufficient to prove that, for all $e\in E$,
we have
$$\ch\Big(1-[\cO_{D_e^\aug}]\Big) = \prod_{e\notin F} \ch\Big(1-[\cO_{D_F^\aug}]\Big).$$
We have
$$\ch\Big(1-[\cO_{D_e^\aug}]\Big) = \ch([\cO(-D_e^\aug)]) = \exp\Big(-[D_e^\aug]\Big)$$
and
$$\prod_{e\notin F} \ch\Big(1-[\cO_{D_F^\aug}]\Big) 
= \prod_{e\notin F} \exp\Big(-[D_F^\aug]\Big) = \exp\Big(-\sum_{e\notin F} [D_F^\aug]\Big).$$
The fact that these are equal follows from the fact that $\cI_2$ is contained in the kernel of $\PhFY^\aug$.

Let $R \coloneqq T_\cA\,\big{/}\,(\cI_1'+\cI_2' + \cI_3+\cI_4^\aug)$.  We have now shown that $R$ surjects onto $K(\Waug)$, and we need
to prove that the map is injective.  
Let $p$ be the rank of the free abelian group $A(\Waug)$.
Consider the decreasing filtration
$$R = F_0\supset F_1\supset\cdots,$$
where $F_i$ is the span of all monomials of total degree $\geq i$.
Since the leading terms of the generators $\cI_1'$ and $\cI_2'$ coincide with the generators of $\cI_1$ and $\cI_2$,
we have a surjection
$$A(\Waug) \cong T_\cA\,\big{/}\,(\cI_1+\cI_2 + \cI_3+\cI_4^\aug)\to\gr R.$$
In particular, this implies that the abelian group $\gr R$ can be generated by $p$ elements, and so the same is true of $R$.
Lemma \ref{tf} tells us that $K(\Waug)$ is also free abelian of rank $p$, so any surjection from $R$ to $K(\Waug)$ must be an isomorphism.

The proof of statement (1) is nearly identical.  The only extra ingredient is the argument that $\cI_4$ is contained 
in the kernel of the map $\PsFY$, which follows from the fact that $D_e^\aug$ is disjoint from $D_\varnothing^\aug\cong \WA$ inside of $\Waug$.
\end{proof}

\section{Exceptional isomorphisms} \label{sec:simplicial}

The purpose of this section is to prove Theorem~\ref{exceptional}.
We begin by observing that we have an inclusion
$$\WA\;\;\;\subset\prod_{\substack{\text{$F\subset E$}\\ \text{a nonempty flat}}} \mathbb{P}(L^F)\;\;\;
\subset \prod_{\substack{\text{$S\subset E$}\\ \text{a nonempty subset}}} \mathbb{P}(L^S).$$
The first inclusion comes from the definition of $\WA$, and the second  from the diagonal
embedding of $\mathbb{P}(L^F)$ into the product of $\mathbb{P}(L^S)$ for all $S$ such that $L^S = L^F$.\footnote{In matroid
theoretic language, this is the collection of subsets $S$ whose closure is $F$.}
Similarly, we have
$$\Waug\;\;\;\subset\prod_{\substack{\text{$F\subset E$}\\ \text{a nonempty flat}}} \mathbb{P}(L^F\oplus\mathbb{F})\;\;\;
\subset \prod_{\substack{\text{$S\subset E$}\\ \text{a nonempty subset}}} \mathbb{P}(L^S\oplus\mathbb{F}).$$

Suppose that $L'\subset L$ is a linear subspace.  Let $E' \coloneqq \{e\in E\mid L'\not\subset H_e\}$,
and define a new hyperplane arrangement $$\cA_{L'} \coloneqq \{H_e\cap L'\mid e\in E'\}.$$ 
If $E'=E$, then we have inclusions
$W_{\!\cA_{L'}}\subset\WA$ and 
$W^\aug_{\!\cA_{L'}}\subset\Waug$, each inside of the corresponding product of projective spaces indexed by subsets of $E$.
If $L'$ is contained in one or more hyperplane, then we still have an inclusion $W^\aug_{\!\cA_{L'}}\subset\Waug$, where
$W^\aug_{\!\cA_{L'}}$ sits inside the product indexed by subsets of $E$ by setting the $S$ coordinate to $0\in L^S \subset  \mathbb{P}(L^S\oplus\mathbb{F})$
unless $S\subset E'$.

For any flat $F$ of $\cA$, let ${F}' \coloneqq \{e\in E' \mid L_F\cap L'\subset H_e\}$
be the corresponding flat of $L'$.  Then the line bundle $\cL_F^{\aug}$ on $\Waug$ restricts to the line bundle $\cL_{ F'}$ 
on $W_{\!\cA_{L'}}^{\aug}$.  If $E = E'$, then the line bundle $\cL_F$ on $\WA$ restricts to the line bundle $\cL_{ F'}$ 
on $W_{\!\cA_{L'}}$.

\begin{lemma}\label{lem:generic}
Fix a flat $F$, and
suppose that $H\subset L$ is a hyperplane with the property that $L_F\subset H$ and 
$L_G\not\subset H$ for all flats $G\subsetneq F$.
Then $$h^\aug_F 
= [W^\aug_{\!\cA_H}] \and 
\eta^\aug_F 
= [\cO_{W^\aug_{\!\cA_H}}].$$
If the rank of $F$ is greater than 1, then
$$h_F 
= \left[W_{\!\cA_H}\right] \and \eta_F 
= \left[\cO_{W_{\!\cA_H}}\right].$$
\end{lemma}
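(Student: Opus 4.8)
The plan is to deduce all four identities from a single statement: that $\cL_F^\aug \cong \cO_{\Waug}(W^\aug_{\!\cA_H})$ as line bundles on $\Waug$, and, when $\rk F > 1$, that $\cL_F \cong \cO_{\WA}(W_{\!\cA_H})$ as line bundles on $\WA$. Granting the first isomorphism, taking first Chern classes gives $h_F^\aug = [W^\aug_{\!\cA_H}]$, and the exact sequence $0 \to \cO_{\Waug}(-W^\aug_{\!\cA_H}) \to \cO_{\Waug} \to \cO_{W^\aug_{\!\cA_H}} \to 0$ gives, in $K(\Waug)$, the identity $[\cO_{W^\aug_{\!\cA_H}}] = 1 - [\cO_{\Waug}(-W^\aug_{\!\cA_H})] = 1 - [(\cL_F^\aug)^{-1}] = \eta_F^\aug$; the two non-augmented identities follow from $\cL_F \cong \cO_{\WA}(W_{\!\cA_H})$ in the same way. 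So it suffices to produce these two line-bundle isomorphisms.

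To build the first, I would exhibit a global section of $\cL_F^\aug$ whose zero divisor is exactly $W^\aug_{\!\cA_H}$. Write $H = \ker\phi$ for a linear functional $\phi\in L^*$; since $L_F\subseteq H$, the functional $\phi$ vanishes on $L_F$ and so descends to a nonzero $\bar\phi\in (L^F)^*$, and $(\bar\phi, 0)$ is a section of $\cO_{\mathbb{P}(L^F\oplus\mathbb{F})}(1)$ whose zero locus is the hyperplane $\mathbb{P}(\bar H\oplus\mathbb{F})$, where $\bar H := H/L_F\subseteq L^F$. Pulling back along $\pi_F^\aug$ produces a nonzero section $s$ of $\cL_F^\aug$, so $\cO_{\Waug}(\operatorname{div}s)\cong \cL_F^\aug$ by construction, and the task becomes to identify $\operatorname{div}s$ — that is, the scheme-theoretic preimage $(\pi_F^\aug)^{-1}(\mathbb{P}(\bar H\oplus\mathbb{F}))$ — with the reduced, irreducible divisor $W^\aug_{\!\cA_H}$.

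Here I would use that $\pi_F^\aug$ factors as the iterated blow-up $\pi_E^\aug\colon\Waug\to\mathbb{P}(L\oplus\mathbb{F})$ followed by the linear projection away from $\mathbb{P}(L_F)$, which the blow-up of $\mathbb{P}(L_F)$ already resolves. Under $\pi_E^\aug$ the strict transform of the hyperplane $\mathbb{P}(H\oplus\mathbb{F})$ is, by the description of the embedding $W^\aug_{\!\cA_H}\subseteq\Waug$ recalled before the lemma, precisely $W^\aug_{\!\cA_H}$, and $\mathbb{P}(H\oplus\mathbb{F})$ maps into $\mathbb{P}(\bar H\oplus\mathbb{F})$, so $W^\aug_{\!\cA_H}\subseteq\operatorname{div}s$. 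It then remains to check (i) that $s$ vanishes to order one along $W^\aug_{\!\cA_H}$, which is a local computation at the generic point of $W^\aug_{\!\cA_H}$, over which $\pi_F^\aug$ restricts to an isomorphism onto an open set of $\mathbb{P}(L^F\oplus\mathbb{F})$ meeting $\mathbb{P}(\bar H\oplus\mathbb{F})$ transversally; and (ii) that $(\pi_F^\aug)^{-1}(\mathbb{P}(\bar H\oplus\mathbb{F}))$ has no component among the boundary divisors other than $W^\aug_{\!\cA_H}$ itself. Point (ii) is exactly where the genericity hypothesis on $H$ is used: one checks that $\pi_F^\aug$ carries the exceptional divisor over $\mathbb{P}(L_F)$ onto all of $\mathbb{P}(L^F\oplus\mathbb{F})$ (from the normal-bundle description of the blow-up), and more generally that $\pi_F^\aug(D_G^\aug)$ fails to lie in the proper subspace $\mathbb{P}(\bar H\oplus\mathbb{F})$ unless $L_G\subseteq H$, which the hypothesis forbids for every flat $G$ that is relevant, i.e. that does not contain $F$; similarly for the divisors $D_e^\aug$. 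One could equivalently bypass the section and compute $[W^\aug_{\!\cA_H}]$ directly as $(\pi_E^\aug)^* c_1(\cO(1))$ minus the exceptional divisors that the strict transform of $\mathbb{P}(H\oplus\mathbb{F})$ acquires along the iterated blow-up, then match the answer against the standard expansion of $h_F^\aug$ in the Feichtner--Yuzvinsky generators (Equation~\eqref{add} and Remark~\ref{Asame}); the hypothesis on $H$ is precisely what makes the two expansions agree.

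Finally, for the non-augmented statements with $\rk F>1$, I would restrict along $\WA\cong D_\varnothing^\aug\subseteq\Waug$. When $\rk F>1$ the hypothesis forces $H\ne H_e$ for every $e$, so $E'=E$, the subvariety $W_{\!\cA_H}\subseteq\WA$ is defined, $\cL_F^\aug|_{\WA}=\cL_F$, and $W^\aug_{\!\cA_H}\cap D_\varnothing^\aug=W_{\!\cA_H}$; since $\WA\not\subseteq W^\aug_{\!\cA_H}$ by dimension (as $H\ne L$) and the intersection is transverse at the generic point of $W_{\!\cA_H}$, the isomorphism $\cL_F^\aug\cong\cO_{\Waug}(W^\aug_{\!\cA_H})$ restricts to $\cL_F\cong\cO_{\WA}(W_{\!\cA_H})$, giving the remaining two identities. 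The main obstacle is step (ii): verifying that the preimage $(\pi_F^\aug)^{-1}(\mathbb{P}(\bar H\oplus\mathbb{F}))$ is reduced and has no extraneous boundary components, which is where all of the geometry of the iterated blow-up, and the precise meaning of ``$H$ generic containing $L_F$'', enter the argument.
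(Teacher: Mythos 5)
Your proposal is correct and follows essentially the same route as the paper: pull back a linear section of $\cO_{\mathbb{P}(L^F\oplus\mathbb{F})}(1)$ cutting out $\mathbb{P}(H/L_F\oplus\mathbb{F})$ to get a regular section of $\cL_F^\aug$ with zero scheme $W^\aug_{\!\cA_H}$, then read off the Chow identity from $c_1$ and the $K$-identity from the resulting short exact sequence. The paper simply asserts that the vanishing locus is $W^\aug_{\!\cA_H}$ and handles the non-augmented case by saying it is ``similar,'' whereas you fill in the reducedness/no-extra-components verification and deduce the non-augmented case by restriction to $D_\varnothing^\aug$; these are elaborations, not a different argument.
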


\begin{proof}
We prove only the first statement; the second is similar.
Let $g$ be a section of $\cO_{\mathbb{P}(L^F\oplus \mathbb{F})}(1)$ with vanishing locus equal to
$\mathbb{P}(H/L_F\oplus \mathbb{F}) \subset \mathbb{P}(L^F\oplus \mathbb{F})$.
Then $(\pi_F^\aug)^*g$ is a regular section of $\cL_F^\aug$ with vanishing locus $W^\aug_{\!\cA_H}$, which shows that 
$h_F^\aug = c_1(\cL_F^\aug) = [W^\aug_{\!\cA_H}]$.
We interpret $(\pi_F^\aug)^*g$ as an element
of $\Hom(\cO_{\Waug},\cL_F^\aug)\cong \Hom\big((\cL_F^\aug)^{-1},\cO_{\Waug}\big)$ to obtain 
an exact sequence
$$0\to(\cL_F^\aug)^{-1}\overset{(\pi^{\aug}_F)^*g}{\longrightarrow}\cO_{\Waug}\to\cO_{W^\aug_{\!\cA_H}}\to 0,$$
which shows that $\eta^\aug_F = 1 - [(\cL_F^\aug)^{-1}] = [\cO_{\Waug}] -  [(\cL_F^\aug)^{-1}] =  [\cO_{W^\aug_{\!\cA_H}}]$. 
\end{proof}

For any tuple of natural numbers $\textbf{m} = (m_F \mid F \text{ a nonempty flat})$, we define the monomials
\begin{equation*}
\begin{aligned}
h^{\textbf{m}} &\coloneqq \prod_{F \not= \emptyset} h_F^{m_F}\in A(\WA) & \quad \quad 
(h^\aug)^{\textbf{m}} &\coloneqq \prod_{F \not= \emptyset} (h_F^\aug)^{m_F}\in A(\Waug)\\
\eta^{\textbf{m}} &\coloneqq \prod_{F \not= \emptyset} \eta_F^{m_F} \in K(\WA) & 
(\eta^\aug)^{\textbf{m}} &\coloneqq \prod_{F \not= \emptyset} (\eta_F^\aug)^{m_F}\in K(\Waug).
\end{aligned}
\end{equation*}

\begin{lemma}\label{lem:subspace}
Suppose that $\mathbb{F}$ is infinite.
For any $\textbf{m}$, one of the following two statements holds:
\begin{itemize}
\item $\eta^{\textbf{m}} = 0$ and $h^{\textbf{m}}=0$.
\item There exists a linear subspace $L'\subset L$, not contained in any of the hyperplanes in $\cA$,
such that $\eta^{\textbf{m}} = [\mathcal{O}_{W_{\!\cA_{L'}}}]$ and $h^{\textbf{m}} = [W_{\!\cA_{L'}}]$.
\end{itemize}
In addition, one of the following  two statements holds:
\begin{itemize}
\item $(\eta^\aug)^{\textbf{m}} = 0$ and $(h^\aug)^{\textbf{m}}=0$.
\item There exists a linear subspace $L'\subset L$
such that $(\eta^\aug)^{\textbf{m}} = [\mathcal{O}_{W^\aug_{\!\cA_{L'}}}]$ and $(h^\aug)^{\textbf{m}} = [W^\aug_{\!\cA_{L'}}]$.
\end{itemize}
\end{lemma}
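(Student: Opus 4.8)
The plan is to prove the statement by induction on the total degree $|\textbf{m}| = \sum_F m_F$, using Lemma~\ref{lem:generic} to peel off one factor at a time and passing to smaller hyperplane arrangements of the form $\cA_{L'}$. I will treat the augmented case in detail, since the non-augmented case is parallel (with the extra bookkeeping that $L'$ must avoid being contained in any hyperplane, which is precisely where the condition $E' = E$ in Lemma~\ref{lem:generic} and the rank-$>1$ hypothesis come in). The base case $|\textbf{m}| = 0$ is trivial: the empty product is $[\cO_{\Waug}] = [\cO_{W^\aug_{\!\cA_L}}]$ with $L' = L$.

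For the inductive step, suppose $(\eta^\aug)^{\textbf{m}} = [\cO_{W^\aug_{\!\cA_{L'}}}]$ (and correspondingly $(h^\aug)^{\textbf{m}} = [W^\aug_{\!\cA_{L'}}]$) for some $L' \subset L$, and I want to multiply by one more factor $\eta^\aug_F$. Under the restriction identification from the discussion preceding Lemma~\ref{lem:generic}, $\eta^\aug_F$ restricts to $\eta_{F'}$ on $W^\aug_{\!\cA_{L'}}$, where $F' = \{e \in E' \mid L_F \cap L' \subset H_e\}$ is the corresponding flat of $\cA_{L'}$. So the product $\eta^\aug_F \cdot [\cO_{W^\aug_{\!\cA_{L'}}}]$ equals the pushforward to $K(\Waug)$ of $\eta_{F'} \in K(W^\aug_{\!\cA_{L'}})$, and similarly for Chow classes. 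Now I apply Lemma~\ref{lem:generic} inside the arrangement $\cA_{L'}$: since $\mathbb{F}$ is infinite, I can choose a generic hyperplane $H' \subset L'$ containing $L'_{F'} = L_F \cap L'$ but not containing $L'_G$ for any flat $G \subsetneq F'$ of $\cA_{L'}$ — genericity is possible because each $L'_G$ is a proper subspace of $L'_{F'}$, hence the union of the relevant bad loci is a proper closed subset of the space of hyperplanes through $L'_{F'}$. Then Lemma~\ref{lem:generic} gives $\eta^\aug_{F'} = [\cO_{W^\aug_{\!(\cA_{L'})_{H'}}}]$ and $h_{F'} = [W^\aug_{\!(\cA_{L'})_{H'}}]$, and $(\cA_{L'})_{H'}$ is itself of the form $\cA_{L''}$ for $L'' = H' \subset L'$. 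Composing pushforwards, $(\eta^\aug)^{\textbf{m}} \cdot \eta^\aug_F = [\cO_{W^\aug_{\!\cA_{L''}}}]$, completing the step. The degenerate case where no such $H'$ exists forces $F' = F''$ for some proper flat, i.e., $L'_{F'}$ is a point or $L' = \{0\}$; then one checks $\eta_{F'} = 0$ (e.g., via Lemma~\ref{tf} and the Chern character, or because $W^\aug_{\!\cA_{L'}}$ is a point and the relevant class vanishes), which makes the whole monomial zero, and the first bullet holds permanently thereafter.

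In the non-augmented case, one additionally needs that $L'$ is never contained in a hyperplane of $\cA$ in the surviving branch; this is maintained because Lemma~\ref{lem:generic}'s non-augmented conclusion requires $\rk F > 1$, and choosing $H'$ generically through $L'_{F'}$ with $\rk_{\cA_{L'}} F' > 1$ ensures $H' \cap L'$ still meets every hyperplane properly, so $E'$ stays equal to the full ground set of $\cA_{L'}$. If at some stage a factor $\eta_F$ with $\rk F' = 1$ appears, or if the genericity fails, the monomial is killed and we are in the first bullet.

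The main obstacle I anticipate is making the genericity argument fully rigorous and compatible with the inductive framework: I need to verify that "generic hyperplane through $L'_{F'}$" can always be chosen to avoid all the finitely many subspaces $L'_G$ with $G \subsetneq F'$, which requires $L'_{F'}$ to have positive dimension and to strictly contain each such $L'_G$ — this is exactly the content of $F'$ being a flat of rank $\geq 2$ (or $\geq 1$ in the augmented case) in $\cA_{L'}$ — and that when this fails the corresponding simplicial generator genuinely vanishes rather than merely being hard to compute. The restriction compatibilities $\eta^\aug_F|_{W^\aug_{\!\cA_{L'}}} = \eta^\aug_{F'}$ stated before Lemma~\ref{lem:generic} are what allow the induction to close, so I will want to invoke those carefully, and I will need the projection formula to identify products with pushforwards of restrictions.
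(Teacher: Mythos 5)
Your proposal is correct and follows essentially the same route as the paper: induction on $\sum_F m_F$, peeling off one simplicial generator at a time, restricting to $W_{\!\cA_{L'}}$ to identify the factor with $\eta_{F'}$, choosing a generic hyperplane $H'\subset L'$ through $L_{F'}$ (using that $\mathbb{F}$ is infinite) and invoking Lemma~\ref{lem:generic}, with the rank-one case of $F'$ giving a trivial line bundle and hence the zero branch. The paper phrases the $K$-theory step as the direct identity $\eta_F[\cO_{W_{\!\cA_{L'}}}]=[\cO_{W_{\!\cA_{L'}}}]-[\cL_{F'}^{-1}]$ and reserves the projection formula for the Chow computation, but this is the same argument as your restriction-plus-pushforward formulation.
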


\begin{proof}
We will prove only the first claim; the augmented case is similar. 
We proceed by induction on $\sum m_F$.  When $\textbf{m}=0$, we can take $L'=L$.  For the inductive step, assume that the second
statement holds for $\textbf{m}'$, and take $\textbf{m}$ such that $m_F = m'_F + 1$ and $\textbf{m}$ agrees with $\textbf{m}'$ in every other coordinate.
Then
$$\eta^{\textbf{m}} = \eta_F \eta^{\textbf{m}'} = \eta_F [\mathcal{O}_{W_{\!\cA_{L'}}}] = \big(1-[\cL_F^{-1}]\big)[\mathcal{O}_{W_{\!\cA_{L'}}}]  
= [\mathcal{O}_{W_{\!\cA_{L'}}}] - [\mathcal{L}_{F'}^{-1}],$$
where $F' =\{e\mid L_F\cap L'\subset H_e\}$ and $\cL_{F'}$ is the corresponding line bundle on $W_{\!\cA_{L'}}$.
If $F'$ has rank 1, then $\cL_{F'}$ is trivial, and we get zero.  If $F'$ has rank greater than 1, then we may choose a 
hyperplane\footnote{This is where we use the fact that $\mathbb{F}$ is infinite.}
$H'\subset L'$ such that $H'\notin\cA'$, $L_{F'}\subset H'$, and $L_{G'}\not\subset H'$ for all flats $G'\subsetneq F'$.
Then by Lemma \ref{lem:generic}, $\eta^{\textbf{m}} = [\mathcal{O}_{W_{\!\cA_{L'}}}] - [\mathcal{L}_{F'}^{-1}] = [\mathcal{O}_{W_{\!\cA_{H'}}}]$.
Similarly, we have
$$h^{\textbf{m}} = h_F h^{\textbf{m}'} = h_F[W_{\!\cA_{L'}}] = c_1(\cL_F) [W_{\!\cA_{L'}}].$$
By the projection formula, this is equal to the pushforward of $c_1(\cL_{F'})$ from $W_{\!\cA_{L'}}$ to $\WA$.
If $F'$ has rank 1, then $\cL_{F'}$ is trivial, and we get zero.  If $F'$ has rank greater than 1, then we may choose $H'$
as above, and we get $[W_{\!\cA_{H'}}]$ by Lemma \ref{lem:generic}.
\end{proof}

\begin{remark}\label{bumping the last coordinate}
It is useful to think about the special case when $F=E$ in the proof of Lemma \ref{lem:subspace}.  In this case,
$F'=E$, and the rank of $F'$ is equal to the dimension of $L'$.  This means that, if $h^{\textbf{m}} = [W_{\!\cA_{L'}}]$ for some
$L'$ of dimension greater than 1, then $h^{\textbf{m}}h_E = [W_{\!\cA_{H'}}]$ for some generic hyperplane $H'\subset L'$.
Iterating this observation, we see that, if $k= \dim L - 1 - \sum m_F$, then $h^{\textbf{m}}h_E^k\in A^{\dim L - 1}(\WA)$ is the class of a point.
Similarly, $(h^\aug)^{\textbf{m}}(h_E^\aug)^{k+1}\in A^{\dim L}(\Waug)$ is the class of a point.
\end{remark}

\begin{proposition}\label{prop:HRRtype}
We have
\begin{equation}\label{eq:HRR}
\chi(\WA, \eta^{\textbf{m}}) = \deg_{\WA}\left( \frac{h^{\textbf{m}}}{1 - h_E}\right) \quad and \quad \chi(\Waug, (\eta^{\aug})^{\textbf{m}}) = \deg_{\Waug}\left( \frac{(h^{\aug})^{\textbf{m}}}{1 - h_E^{\aug}}\right).
\end{equation}
\end{proposition}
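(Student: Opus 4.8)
The plan is to reduce both identities in \eqref{eq:HRR} to the single fact that the structure sheaf of a wonderful variety (or augmented wonderful variety) has Euler characteristic $1$. I will carry out the non-augmented case in detail; the augmented case follows from the identical argument, replacing $\WA$, $\eta^{\textbf{m}}$, $h_E$ by $\Waug$, $(\eta^\aug)^{\textbf{m}}$, $h_E^\aug$ and invoking the second halves of Lemma~\ref{lem:subspace} and Remark~\ref{bumping the last coordinate}. Since Lemma~\ref{lem:subspace} requires $\mathbb{F}$ to be infinite, I would first base change along $\mathbb{F}\hookrightarrow\overline{\mathbb{F}}$: this replaces $\cA$ by an arrangement over an infinite field with the same underlying matroid, the classes $h_F$ and $\eta_F$ are built from the tautological line bundles $\cL_F$ and hence are compatible with the base change, the degree maps agree, and coherent cohomology (and hence $\chi$) commutes with flat base change, so it suffices to prove the statement after this reduction.

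Next, I would apply Lemma~\ref{lem:subspace} to the tuple $\textbf{m}$. In the first case $\eta^{\textbf{m}}=0$ and $h^{\textbf{m}}=0$, so both sides of \eqref{eq:HRR} vanish. In the second case there is a linear subspace $L'\subset L$, not contained in any hyperplane of $\cA$, with $\eta^{\textbf{m}}=[\cO_{W_{\!\cA_{L'}}}]$ and $h^{\textbf{m}}=[W_{\!\cA_{L'}}]$. For the left-hand side, the inclusion $W_{\!\cA_{L'}}\hookrightarrow\WA$ is a closed embedding, so its pushforward is exact and $\chi(\WA,[\cO_{W_{\!\cA_{L'}}}])=\chi(W_{\!\cA_{L'}},\cO_{W_{\!\cA_{L'}}})$; and since $W_{\!\cA_{L'}}$ is itself a wonderful variety, hence an iterated blow-up of $\mathbb{P}(L')$ along smooth centers, we have $Rf_*\cO=\cO_{\mathbb{P}(L')}$ and therefore $\chi(W_{\!\cA_{L'}},\cO_{W_{\!\cA_{L'}}})=\chi(\mathbb{P}(L'),\cO_{\mathbb{P}(L')})=1$.

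For the right-hand side, expand $\tfrac{1}{1-h_E}=\sum_{k\ge 0}h_E^k$, a finite sum because $h_E$ is nilpotent, so that $\deg_{\WA}(h^{\textbf{m}}/(1-h_E))=\sum_{k\ge 0}\deg_{\WA}(h^{\textbf{m}}h_E^k)$. By Remark~\ref{bumping the last coordinate}, the summand with $k=\dim L-1-\sum_F m_F$ is the class of a point and contributes $1$; summands with smaller $k$ lie in degree less than $\dim\WA$ and are annihilated by $\deg_{\WA}$; summands with larger $k$ lie in degree greater than $\dim\WA$ and vanish. Hence the right-hand side equals $1$ as well, matching the left. The argument is mostly bookkeeping resting on Lemmas~\ref{lem:generic} and \ref{lem:subspace} and Remark~\ref{bumping the last coordinate}; the only genuinely new input is the vanishing of higher cohomology of the structure sheaf of a wonderful (and augmented wonderful) variety, immediate from the iterated-blow-up description, and the one point that needs care is the reduction to an infinite ground field so that Lemma~\ref{lem:subspace} is available.
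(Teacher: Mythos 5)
Your proof is correct and follows essentially the same route as the paper's: reduce to an infinite ground field, apply Lemma~\ref{lem:subspace} to identify $\eta^{\textbf{m}}$ and $h^{\textbf{m}}$ with the structure sheaf and fundamental class of a smaller wonderful variety, compute $\chi=1$ from the iterated-blow-up description, and evaluate the degree via Remark~\ref{bumping the last coordinate}. The only difference is that you spell out the base-change and degree bookkeeping in more detail than the paper does.
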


\begin{proof}
We will prove only the non-augmented case; the augmented case is identical. 
The Euler characteristic can be computed after extension of scalars, so we can assume that $\mathbb{F}$ is infinite.
By Lemma \ref{lem:subspace}, either $\eta^{\textbf{m}}=0$ and $h^{\textbf{m}}=0$, in which case Equation \eqref{eq:HRR} holds trivially,
or there exists some $L'\subset L$ such that $\eta^{\textbf{m}} = [\cO_{W_{\!\cA_{L'}}}]$ and $h^{\textbf{m}} = [W_{\!\cA_{L'}}]$.
Then $\chi(\WA, \eta^{\textbf{m}}) = 1$ because $W_{\!\cA_{L'}}$ is a smooth iterated blow-up of projective space, and $\deg_{\WA}\left( \frac{h^{\textbf{m}}}{1 - h_E}\right) = 1$ by Remark \ref{bumping the last coordinate}. 
\end{proof}

We are now ready to prove the existence of exceptional isomorphisms from $K$-rings to Chow rings for wonderful and augmented wonderful varieties.

\begin{proof}[Proof of Theorem~\ref{exceptional}]
Again, we prove only the non-augmented case.  Note that, once we prove the first sentence
in the theorem, the second sentence will follow from Proposition \ref{prop:HRRtype}.

For each nonempty flat $F$, we have an isomorphism $$\zeta_F\colon K(\mathbb{P}(L^F))\to A(\mathbb{P}(L^F))$$
sending the structure sheaf of a hyperplane (which we will denote by $\sigma_F$) to the Chow class of a hyperplane (which we
will denote by $s_F$).  This isomorphism has the property that,
for any $\xi_F\in K(\mathbb{P}(L^F))$, $$\chi(\mathbb{P}(L^F), \xi_F) = \deg_{\mathbb{P}(L^F)}\left(\frac{\zeta_F(\xi_F)}{1-s_F}\right).$$
By the K\"unneth formula \cite[Proposition 6.4]{anderson2015operational}, we have an isomorphism
$$\zeta = \otimes_F \zeta_F \colon  K\bigg(\prod_{F \not= \emptyset} \mathbb{P}(L^F)\bigg)\to A\bigg(\prod_{F \not= \emptyset} \mathbb{P}(L^F)\bigg)$$
that takes $\sigma_F$ to $s_F$ for every $F$, and has the property that, for any $\xi\in K\left(\prod_{F \not= \emptyset} \mathbb{P}(L^F)\right)$,
\begin{equation}\label{eq-euler-deg}\chi\Big(\prod_{F \not= \emptyset} \mathbb{P}(L^F), \xi\Big) = \deg_{\prod_{F \not= \emptyset} \mathbb{P}(L^F)} \left(\frac{ \zeta(\xi)}{\prod_{F\not= \emptyset} (1-s_F)}\right).\end{equation}

By \cite[Remark 3.2.6]{BES} or Theorem~\ref{Amatroid}, the restriction map 
\begin{eqnarray*} A\Big(\prod_{F \not= \emptyset} \mathbb{P}(L^F)\Big) \to A(\WA) \qquad \text{given by} \qquad s_F \mapsto h_F\end{eqnarray*}
is surjective. By Poincar\'{e} duality, its kernel is equal to the annihilator of $[\WA]$. 
By Lemma~\ref{key} and Lemma~\ref{generation}, the analogous restriction map 
\begin{eqnarray*} K\Big(\prod_{F \not= \emptyset} \mathbb{P}(L^F)\Big) \to K(\WA) \qquad \text{given by} \qquad
\sigma_F &\mapsto& \eta_F\end{eqnarray*}
is also surjective. By the nondegeneracy of the Euler pairing \cite[Proposition 6.3]{anderson2015operational}, the kernel is equal to the annihilator of $[\cO_{\WA}]$. 
We will prove that \begin{equation}\label{unit}\zeta([\cO_{\WA}]) = \prod_{F\neq \emptyset, E} (1-s_F)\cdot [\WA].\end{equation}
Since $s_F$ is nilpotent, the above product is a unit, which implies that $\zeta$ takes the annihilator of $[\cO_{\WA}]$
to the annihilator of $[\WA]$.  This in turn shows that $\zeta$
descends to an isomorphism $\zeta_\cA\colon K(\WA)\to A(\WA)$.

We now prove Equation \eqref{unit}.
For any $\textbf{m} = (m_F \mid F \text{ a nonempty flat})$, let $$s^{\textbf{m}} \coloneqq \prod_{F \not= \emptyset} s_F^{m_F}\and\sigma^{\textbf{m}} \coloneqq \prod_{F\not= \emptyset} \sigma_F^{m_F},$$ so that $s^{\textbf{m}}\mapsto h^{\textbf{m}}$ and $\sigma^{\textbf{m}}\mapsto \eta^{\textbf{m}}$ under restriction.
We have
\begin{eqnarray*}
\deg_{\prod_{F \not= \emptyset} \mathbb{P}(L^F)}\Big(s^{\textbf{m}}\cdot\zeta([\cO_{\WA}])\Big)
&=& \chi\Big(\prod_{F \not= \emptyset} \mathbb{P}(L^F), \sigma^{\textbf{m}} \cdot \prod_{F \not= \emptyset}(1-\sigma_F)\cdot [\cO_{\WA}]\Big)\qquad\text{by Equation \eqref{eq-euler-deg}}\\ 
&=& \chi\Big(\WA, \eta^{\textbf{m}}\cdot \prod_{F \not= \emptyset}(1-\eta_F)\Big)\\
&=& \deg_{\WA}\Big(h^{\textbf{m}}\cdot  \prod_{F\neq \emptyset, E}(1-h_F)\Big)\qquad\text{by Proposition \eqref{prop:HRRtype}}\\ 
&=& \deg_{\prod_{F \not= \emptyset} \mathbb{P}(L^F)}\Big(s^{\textbf{m}}\cdot  \prod_{F\neq \emptyset, E}(1-s_F)\cdot [\WA]\Big).
\end{eqnarray*}
Equation \eqref{unit} then follows from Poincar\'e duality.
\end{proof}

\section{The $K$-ring of $\overline{\mathcal{M}}_{0,n}$} \label{sec:m0n}
We now apply our results to study the $K$-ring of $\overline{\mathcal{M}}_{0,n}$, and 
in particular prove the exceptional isomorphism of Theorem~\ref{thm:mzeron}.
From Section~\ref{ssec:m0n}, we observe that the following diagram commutes:
\[
\begin{tikzcd}
    W_{\calB_{n-1}} \ar[r, hookrightarrow] \ar[d, "p"] & \displaystyle\prod_{F \not= \emptyset} \P(L^{F}) \ar[d, "pr"] \\
    \overline{\calM}_{0, n} \ar[r, hookrightarrow] & \displaystyle\prod_{S \subset [n-1], \, |S| \ge 2} \P(L^{F_S}),
\end{tikzcd}
\]
where $pr$ is the projection onto the factors indexed
by flats of the form $F_S$, and 
$p$ is the restriction of $pr$. By \cite[Theorem 4.2]{FeichtnerMuller}, $p$ is a composition of blow-ups at smooth centers. By \cite[Proposition 6.7(b)]{Fulton},
the pullback maps
$$\begin{aligned}
p^*\colon A(\overline{\calM}_{0, n}) \to A(W_{\calB_{n-1}}) \qquad \text{given by} \qquad
c_S \mapsto h_{F_S}, \qquad \text{and}
\end{aligned}$$
$$\begin{aligned}
p^*\colon K(\overline{\calM}_{0, n}) \to K(W_{\calB_{n-1}}) \qquad \text{given by} \qquad
1 - [\calL_S^{-1}] \mapsto \eta_{F_S}
\end{aligned}$$
are both injective. 

Consider the ring 
$R_n \coloneqq \Z[u_S \mid S \subset [n-1], |S| \ge 2 ] \subset S_{\calB_{n-1}},$
where the inclusion sends $u_S$ to $u_{F_S}$.
Then the map $S_{\calB_{n-1}}\to A(W_{\calB_{n-1}})$ given by the simplicial presentation
restricts to a map $R_n\to A(\overline{\calM}_{0, n})$.
Consider the following ideals in $R_n$: 
$$\begin{aligned}
\calK_1 &\coloneqq \left\langle (u_S - u_{S \cup T})(u_T - u_{S \cup T}) \mid S \cap T \neq \varnothing \right\rangle\\
\calK_2 &\coloneqq \langle u_{S} \mid |S| = 2\rangle.\\
\end{aligned}
$$

\begin{theorem}\label{ehkr}{\em \cite{singh} \cite[Theorem 5.5]{MR2630055}}
\label{mzeron}
    The map $R_n \to A(\overline{\calM}_{0, n})$ is surjective with kernel $\calK_1+\calK_2$. 
\end{theorem}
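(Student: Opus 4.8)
The plan is to deduce this presentation from the simplicial presentation of $A(W_{\calB_{n-1}})$ in Theorem \ref{simplicialChow}(1) together with the fact, established just above, that $p^* \colon A(\overline{\calM}_{0,n}) \to A(W_{\calB_{n-1}})$ is injective with image the subring generated by the $h_{F_S}$. Concretely, $A(\overline{\calM}_{0,n})$ is identified with the subring $B \subseteq S_{\calB_{n-1}}/(\cJ_1 + \cJ_2)$ generated by the images of $\{u_{F_S} : |S| \ge 2\}$, and we must show the kernel of $R_n \to B$ is exactly $\calK_1 + \calK_2$. First I would observe that $\calK_1 + \calK_2 \subseteq \ker$: the generators of $\calK_2$ die because $u_{F_S}$ for $|S|=2$ maps to $h_{F_S} = c_1(\cL_S)$, which is zero since $\cL_S$ is trivial when $|S|=2$; the generators of $\calK_1$ die because, for $S \cap T \neq \varnothing$, we have $F_S \vee F_T = F_{S \cup T}$ in the partition lattice of $[n-1]$, so the relation is the image of a generator of $\cJ_1$.

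The substance is the reverse inclusion, and the cleanest route is a Hilbert-function / rank count. On the one hand, by a theorem of Keel the Betti numbers of $\overline{\calM}_{0,n}$ are known, so $\dim_\Q A^k(\overline{\calM}_{0,n})$ is a concrete combinatorial quantity; on the other hand, I would bound the dimension of $R_n/(\calK_1+\calK_2)$ in each degree from above by exhibiting a spanning set of monomials and checking it has the right cardinality. The natural candidate is the analogue of the Feichtner--Yuzvinsky / Dragon monomial basis used for $A(\WA)$ and $A(M)$: squarefree-type monomials $\prod u_{F_{S_i}}$ indexed by chains (or suitable nested families) $S_1 \subsetneq S_2 \subsetneq \cdots$ of subsets of $[n-1]$ of cardinality $\ge 3$, with exponents bounded by the jumps in cardinality. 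Using $\calK_1$ to rewrite any product $u_{F_S} u_{F_T}$ with $S, T$ overlapping but incomparable in terms of comparable ones, and $\calK_2$ to kill cardinality-$2$ factors, every monomial reduces to one in this set; comparing the count of such monomials in degree $k$ with $\dim A^k(\overline{\calM}_{0,n})$ shows the surjection $R_n/(\calK_1+\calK_2) \twoheadrightarrow A(\overline{\calM}_{0,n})$ is an isomorphism in each degree, hence globally.

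An alternative, and perhaps more conceptual, route avoids Betti-number bookkeeping: work directly inside $S_{\calB_{n-1}}/(\cJ_1+\cJ_2) \cong A(W_{\calB_{n-1}})$ and show that the subring $B$ generated by $\{u_{F_S}\}$ is, as an abstract ring, presented by $\calK_1 + \calK_2$. For this one would produce an explicit additive splitting: a set of monomials in the $u_{F_S}$ that (i) is linearly independent in $A(W_{\calB_{n-1}})$ — which can be checked by pairing against explicitly chosen dual monomials using the known intersection numbers, i.e. products of $\psi$-classes on $\overline{\calM}_{0,n}$, via the string and dilaton equations — and (ii) spans $R_n/(\calK_1+\calK_2)$ modulo the relations, by the rewriting argument above. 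The two facts together force the map to be an isomorphism.

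The main obstacle is step (ii)/(i) of the rewriting argument: verifying that the relations $\calK_1$ and $\calK_2$ suffice to reduce an \emph{arbitrary} monomial in the $u_{F_S}$ to a chosen normal form, and that the resulting normal-form monomials are exactly counted by the Betti numbers of $\overline{\calM}_{0,n}$ (equivalently, that no further relations are hidden). This is where the combinatorics of the partition lattice of $[n-1]$ restricted to the flats $F_S$ — which is \emph{not} the full lattice of flats of $\calB_{n-1}$, since we have forgotten many flats — must be handled carefully; in particular one must check that $F_S \vee F_T = F_{S\cup T}$ exactly when $S \cap T \neq \varnothing$ and understand what happens when $S \cap T = \varnothing$ (the join is a flat not of the form $F_U$, so no relation among the kept generators is imposed, which is precisely why $\calK_1$ only ranges over overlapping pairs). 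Since this theorem is attributed to \cite{singh} and \cite[Theorem 5.5]{MR2630055}, I would expect the write-up here to be brief, citing those sources for the reduction/independence bookkeeping and emphasizing only the dictionary between flats $F_S$ and subsets $S$.
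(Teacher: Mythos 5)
The paper does not actually prove this statement: the citations \cite{singh} and \cite[Theorem 5.5]{MR2630055} are built into the theorem header, and the result is imported from the literature with no in-paper argument. So there is no proof to compare against line by line. Your sketch is a reasonable reconstruction of what such a proof looks like, and the easy containment $\calK_1+\calK_2\subseteq\ker$ is verified correctly (triviality of $\cL_S$ for $|S|=2$, and $F_S\vee F_T=F_{S\cup T}$ precisely when $S\cap T\neq\varnothing$, with disjoint $S,T$ imposing no relation among the retained generators).

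Two caveats. First, you assume at the outset that the image of $p^*$ is the subring of $A(W_{\calB_{n-1}})$ generated by the $h_{F_S}$. The paper establishes injectivity of $p^*$ just above, but the claim about its image is exactly the surjectivity of $R_n\to A(\overline{\calM}_{0,n})$, which is part of the statement being proved, so as written the setup is mildly circular. Surjectivity needs its own argument; the remark following Corollary~\ref{thm:simplicial-k-moduli} notes that it follows from the building-set version of the argument in \cite[Section 3.2]{BES}, and Keel's presentation gives another route. Second, the hard direction --- that no relations beyond $\calK_1+\calK_2$ hold --- is only outlined: the normal-form rewriting via $\calK_1$ and the match against the Betti numbers of $\overline{\calM}_{0,n}$ are named but not carried out, and this is where the combinatorics of the non-building-set flats genuinely bites. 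Since the paper itself defers entirely to \cite{singh} and \cite{MR2630055} for this, deferring to those sources is an acceptable resolution, but as a self-contained proof the proposal is incomplete at precisely the step you identify as the main obstacle.
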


The surjectivity statement in Theorem \ref{ehkr} allows us to prove Theorem \ref{thm:mzeron}.

\begin{proof}[Proof of Theorem \ref{thm:mzeron}.]
It is known to experts that $K(\overline{\mathcal{M}}_{0,n})$ is generated by line bundles; see the discussion in the introduction of \cite{castravet2020derived}. This can also be proved in the same way as Lemma~\ref{key}. 
Therefore,  Theorem~\ref{ehkr} and Lemma~\ref{generation} imply that $K(\overline{\calM}_{0, n})$ is generated by
$1- [\calL_S^{-1}]$ for all subsets $S\subset[n-1]$
of cardinality at least 2.  By Theorems \ref{exceptional} and \ref{ehkr},
restriction of the isomorphism $\zeta_{\calB_{n-1}}\colon K(W_{\calB_{n-1}})\to A(W_{\calB_{n-1}})$
to $$K(\overline{\calM}_{0, n}) \subset K(W_{\calB_{n-1}})$$
takes $K(\overline{\mathcal{M}}_{0,n})$ isomorphically to $A(\overline{\calM}_{0, n}) \subset A(W_{\calB_{n-1}}).$
\end{proof}

\begin{remark}
    In \cite{CT2}, Castravet and Tevelev studied 
    $K(\overline{\calM}_{0, n})$
    as a representation of the symmetric group $S_n$; in particular, they show that the $S_n$-action on $K(\overline{\calM}_{0, n})$ is a permutation representation over $\Z$.
    The action of $S_n$ on $A(\overline{\mathcal{M}}_{0,n})$ has also been studied, beginning with \cite{Getzler}. Our isomorphism
    in Theorem \ref{thm:mzeron} is not $S_n$-equivariant, but it is equivariant
    with respect to the action of the subgroup $S_{n-1}\subset S_n$ that fixes $n$. Note that $K(\overline{\calM}_{0, n}) \otimes \mathbb{Q}$ and $A(\overline{\mathcal{M}}_{0,n}) \otimes \mathbb{Q}$ are $S_n$-equivariantly isomorphic via the Chern character, but one can check that $A(\overline{\mathcal{M}}_{0,5})$ is not a permutation representation of $S_5$ over $\Z$, and therefore cannot be $S_5$-equivariantly isomorphic to $K(\overline{\mathcal{M}}_{0,5})$. 
\end{remark}

Theorems \ref{thm:mzeron} and \ref{ehkr} combine to give us the following corollary.

\begin{corollary}
\label{thm:simplicial-k-moduli}
    The homomorphism $R_n \to K(\overline{\calM}_{0, n})$ taking $u_S$ to 
    $1- [\calL_S^{-1}]$ is surjective with 
    kernel $\calK_1+\calK_2$. 
\end{corollary}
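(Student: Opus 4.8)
The plan is to deduce Corollary~\ref{thm:simplicial-k-moduli} directly from Theorem~\ref{thm:mzeron} together with Theorem~\ref{ehkr}, in exact parallel with how Corollary~\ref{simplicialK} follows from Theorems~\ref{exceptional} and~\ref{simplicialChow}. The isomorphism $\zeta_n:K(\overline{\calM}_{0,n})\to A(\overline{\calM}_{0,n})$ of Theorem~\ref{thm:mzeron} sends $1-[\cL_S^{-1}]$ to $c_S$. Hence the homomorphism $R_n\to K(\overline{\calM}_{0,n})$ taking $u_S$ to $1-[\cL_S^{-1}]$, post-composed with $\zeta_n$, is precisely the homomorphism $R_n\to A(\overline{\calM}_{0,n})$ taking $u_S$ to $c_S=h_{F_S}$, which is the restriction to $R_n$ of the map $\Phsim$ appearing in Theorem~\ref{simplicialChow}. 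By Theorem~\ref{ehkr}, the latter is surjective with kernel $\calK_1+\calK_2$. Since $\zeta_n$ is a ring isomorphism, the map $R_n\to K(\overline{\calM}_{0,n})$ has the same image (up to the isomorphism) and the same kernel.

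Concretely, I would argue as follows. First, observe that the diagram
\[
\begin{tikzcd}
R_n \ar[r] \ar[rd] & K(\overline{\calM}_{0,n}) \ar[d, "\zeta_n"] \\
 & A(\overline{\calM}_{0,n})
\end{tikzcd}
\]
commutes, where the horizontal map sends $u_S\mapsto 1-[\cL_S^{-1}]$ and the diagonal map sends $u_S\mapsto c_S$; commutativity is immediate from the defining property of $\zeta_n$ in Theorem~\ref{thm:mzeron}. Since $\zeta_n$ is an isomorphism, the horizontal map is surjective if and only if the diagonal map is, and $\Ker$ of the horizontal map equals $\Ker$ of the diagonal map. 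By Theorem~\ref{ehkr}, the diagonal map $R_n\to A(\overline{\calM}_{0,n})$ is surjective with kernel $\calK_1+\calK_2$. Therefore $R_n\to K(\overline{\calM}_{0,n})$ is surjective with kernel $\calK_1+\calK_2$, which is the claim.

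There is essentially no obstacle here: the corollary is a formal consequence of the two cited theorems once one notes the compatibility of the generators. The only point that requires a word of care is confirming that the diagonal map $R_n \to A(\overline{\calM}_{0,n})$ is indeed the restriction of $\Phsim$ to $R_n$, i.e., that $c_S = h_{F_S}$ under the identification $R_n\subset S_{\calB_{n-1}}$ and the restriction $A(W_{\calB_{n-1}})\to A(\overline{\calM}_{0,n})$; this follows from $\cL_S = p^*$ of the corresponding bundle and the commuting square relating $W_{\calB_{n-1}}$ and $\overline{\calM}_{0,n}$ recorded at the start of Section~\ref{sec:m0n}, together with the fact (already used in the proof of Theorem~\ref{thm:mzeron}) that $p^*$ is injective and matches $c_S$ with $h_{F_S}$ and $1-[\cL_S^{-1}]$ with $\eta_{F_S}$. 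Given that, the proof is just the two-line diagram chase above.
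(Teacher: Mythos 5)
Your proposal is correct and is exactly the argument the paper intends: the corollary is stated as a formal combination of Theorems \ref{thm:mzeron} and \ref{ehkr}, and your diagram chase through the isomorphism $\zeta_n$ (which matches $u_S \mapsto 1-[\cL_S^{-1}]$ with $u_S \mapsto c_S$) is precisely how those two results combine. Nothing is missing.
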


\begin{remark}
The variety $\overline{\mathcal{M}}_{0,n}$ is a special case of a wonderful variety with a \emph{building set}. The proof of Theorem~\ref{thm:mzeron} generalizes to any wonderful variety with a building set, as the surjectivity part of Theorem~\ref{ehkr} can be proved for any wonderful variety with a building set along of the lines of \cite[Section 3.2]{BES}. Therefore, the application in this section can be extended to the Hassett compactification of \textit{heavy/light} weighted rational stable curves \cite{Hassett}. It is also a wonderful variety with a certain building set \cite{CHMR}, and its Chow ring has been studied in \cite{Kannan2021ChowRO}. 
\end{remark}

\section{Matroid $K$-rings} \label{sec:matroidk}
In this section, we study $K(M)$ and $K^{\aug}(M)$, the $K$-rings of $X_{\Sigma_M}$ and $X_{\Sigma_{M}^{\aug}}$, respectively. We begin by establishing some basic properties that show that $K(M)$ and $K^{\aug}(M)$ behave similarly to the $K$-ring of the wonderful variety and augmented wonderful variety of a realization. 
We will use the following notation:
\begin{equation*}
\begin{aligned}
t_F &\coloneqq \PhFY(x_F)\in A(M) & \quad \quad 
t_F^\aug &\coloneqq \PhFY^\aug(x_F)\in A^\aug(M)\\
\tau_F &\coloneqq \PsFY(x_F) \in K(M) & 
\tau_F^\aug &\coloneqq \PsFY^\aug(x_F) \in K^\aug(M)\\
\end{aligned}
\end{equation*}
By Equation \eqref{add}, we have
\begin{equation}\label{the simple direction}
h_F = -\sum_{F\subset G} t_G \and
h_F^\aug = -\sum_{F\subset G} t_G^\aug.
\end{equation}
We also recall the Feichtner--Yuzvinsky presentations of $A(M)$ and $A^{\aug}(M)$. Define the ideals $\cI_1,\cI_2,
\cI_3,\cI_4,\cI_4^\aug\subset T_M$ 
as in Equation \eqref{eq:Idef}.

\begin{theorem}\label{AmatroidFY}
Let $M$ be a loopless matroid.
\begin{itemize}
\item[(1)] The map 
$\PhFY$ is surjective with kernel $\cI_1+\cI_2+\cI_3+\cI_4$ \cite{FY}.
\item[(2)] The map 
$\PhFY^\aug$ is surjective with kernel $\cI_1+\cI_2+\cI_3+\cI_4^\aug$ \cite{BHMPW20a}.
\end{itemize}
\end{theorem}

\begin{proof}[Proof of Proposition \ref{K is combinatorial}.]
Since $A(M) = A(X_{\Sigma_M})$ is torsion-free, it is isomorphic to the associated graded of $K(M) = A(X_{\Sigma_M})$ with
respect to the coniveau filtration \cite[Example 15.2.16]{Ful93}.
Since we know that the associated graded map $A(M)\to A(\WA)$ is an isomorphism, the filtered map $K(M)\to K(\WA)$
is also an isomorphism \cite[Theorem 5.2.12]{MR1269324}.  The augmented case is identical.
\end{proof}

\begin{proof}[Proof of Lemma \ref{key}.]
By Proposition \ref{K is combinatorial}, this is equivalent to the statement that the rings
$K(M)$ and $K^\aug(M)$ are generated by line bundles on the toric varieties $X_{\Sigma_M}$ and $X_{\Sigma_M^\aug}$.
This is a general property of smooth toric varieties, see \cite[Lemma 2.2]{anderson2015operational}.
\end{proof}

We have the following generalization of Theorem~\ref{FYK} to matroids. 

\begin{theorem}\label{Kmatroid}
Let $M$ be a loopless matroid.
\begin{itemize}
\item[(1)] The map 
$\PsFY$ is surjective with kernel $\cI'_1+\cI'_2+\cI_3+\cI_4$.
\item[(2)] The map 
$\PsFY^\aug$ is surjective with kernel $\cI'_1+\cI'_2+\cI_3+\cI_4^\aug$.
\end{itemize}
\end{theorem}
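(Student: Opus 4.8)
The goal is to prove \Cref{Kmatroid}, the matroid-level analogue of \Cref{FYK}, for an arbitrary loopless matroid $M$. The strategy is to leverage the toric-geometric description: $K(M)$ and $K^\aug(M)$ are by definition the $K$-rings of the smooth toric varieties $X_{\Sigma_M}$ and $X_{\Sigma_M^\aug}$, whose $K$-rings have known presentations in terms of the classes $[\cO_{D_\rho}]$ of the torus-invariant divisors. First I would recall the general presentation of the $K$-ring of a smooth complete (or at least smooth, with the relevant properties) toric variety $X_\Sigma$: $K(X_\Sigma)$ is generated by the classes $[\cO_{D_\rho}]$ subject to (i) a multiplicative relation $\prod_\rho (1 - x_\rho)^{\langle m, u_\rho\rangle_+} = \prod_\rho(1-x_\rho)^{\langle m, u_\rho\rangle_-}$ for each character $m$ (equivalently, one relation per generator of the character lattice), coming from $\operatorname{div}(\chi^m) \sim 0$; and (ii) the Stanley--Reisner-type relations $\prod_{\rho \in \sigma} x_\rho = 0$ whenever the rays in $\sigma$ do not span a cone. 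Here $x_\rho := 1 - [\cO_{X}(-D_\rho)] = 1 - [\cL_{D_\rho}^{-1}]$, or equivalently $[\cO_{D_\rho}] = x_\rho$ after identifying $[\cO_{D_\rho}] = 1 - [\cO_X(-D_\rho)]$ via the exact sequence $0 \to \cO_X(-D_\rho) \to \cO_X \to \cO_{D_\rho} \to 0$.

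\textbf{Translating the toric presentation into the combinatorial one.} The rays of $\Sigma_M^\aug$ are $\{\rho_F^\aug \mid F \text{ proper flat}\} \cup \{\rho_e^\aug \mid e \in E\}$, so the toric presentation gives generators $x_F := [\cO_{D_{\rho_F^\aug}}]$ and $y_e := [\cO_{D_{\rho_e^\aug}}]$. The Stanley--Reisner relations for $\Sigma_M^\aug$ are exactly the statement that a collection of rays forms a cone iff the corresponding flats and elements form a compatible "flag-like" configuration; extracting the pairwise relations one gets $x_F x_G = 0$ for incomparable $F, G$ (this is $\cI_3$) and $y_e x_F = 0$ when $e \notin F$ (this is $\cI_4^\aug$) — the higher relations $\prod_{\rho \in \sigma} x_\rho = 0$ are consequences of these pairwise ones, a combinatorial fact about the augmented Bergman fan that I would either cite from \cite{BHMPW20a} or verify directly. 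The multiplicative relations: the character lattice of the ambient torus of $X_{\Sigma_M^\aug}$ is $\Z^E$ (or a quotient), and for $e \in E$ the character $e^* $ pairs with $u_{\rho_F^\aug}$ and $u_{\rho_{e'}^\aug}$ in a way dictated by the combinatorics of the augmented Bergman fan, yielding precisely the relation $(1-y_e) = \prod_{e \notin F}(1-x_F)$ (the generators of $\cI_2'$) after rearranging. The single relation $1 = \prod_F (1-x_F)$ (generator of $\cI_1'$) arises from the relation defining $\cL_E^\aug$, i.e.\ from setting $x_E := 1 - [\cL_E^\aug]$ where $c_1(\cL_E^\aug) = \sum_{F \neq E}[D_{\rho_F^\aug}]$ and recognizing that in $K$-theory $[\cL_E^\aug] = \prod_{F \neq E}(\text{something})$ — more precisely, $1 - x_E = [\cL_E^\aug]$ and one needs $[\cL_E^\aug] = \prod_{F \neq E, \text{proper}} (1 - x_F)$, which should follow from the iterated-blow-up structure or from a direct computation in the toric $K$-ring. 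Combining, the kernel of $\PsFY^\aug$ is exactly $\cI_1' + \cI_2' + \cI_3 + \cI_4^\aug$, proving part (2). Part (1) then follows by restriction along $X_{\Sigma_M} \cong D_{\rho_\varnothing^\aug} \subset X_{\Sigma_M^\aug}$: pullback of $K$-classes along the inclusion of a smooth toric divisor is surjective with a controllable kernel, and $[\cO_{D_{\rho_\varnothing^\aug}}] = x_\varnothing$, so setting $y_e \mapsto 0$ (since $D_{\rho_e^\aug}$ is disjoint from $D_{\rho_\varnothing^\aug}$, as $\varnothing \notin$ anything forces... wait, $\varnothing \subset F_e$ so actually one must check this vanishing carefully) and using $\cI_4 = \langle y_e \rangle$, the presentation $\cI_1' + \cI_2' + \cI_3 + \cI_4$ drops out; this requires checking that $\cI_4$ rather than $\cI_4^\aug$ is the right ideal, which is forced because on $\WA$ (or $X_{\Sigma_M}$) the divisors $D_e^\aug$ restrict to zero, not just to something annihilated by the $x_F$.

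\textbf{The main obstacle.} The hard part is \emph{not} writing down the toric presentation — that is standard — but rather verifying that the multiplicative (character-lattice) relations of $X_{\Sigma_M^\aug}$ collapse to precisely the stated generators of $\cI_1'$ and $\cI_2'$, with no missing or extra relations, \emph{over $\Z$}. This requires a careful analysis of the ray generators $u_{\rho_F^\aug}, u_{\rho_e^\aug}$ of the augmented Bergman fan and their pairings with a chosen basis of characters; the subtlety is that one must argue the full ideal of multiplicative relations (one per character, a priori infinitely many, but generated by finitely many) is generated by the $\cI_1'$ and $\cI_2'$ generators together with consequences of $\cI_3, \cI_4^\aug$. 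A clean way to finesse this: establish surjectivity of $\PsFY^\aug$ first (the $[\cO_{D_\rho}]$ generate the $K$-ring of a smooth toric variety, and $x_E$ is a combination of them, so this is easy), show the stated ideal is contained in the kernel (a direct check using the divisor-of-character relations and Stanley--Reisner), and then prove the induced surjection $T_M/(\cI_1' + \cI_2' + \cI_3 + \cI_4^\aug) \twoheadrightarrow K^\aug(M)$ is an isomorphism by a \emph{rank/freeness count}: both sides are free abelian groups, and one computes the rank of the left side combinatorially (e.g.\ by exhibiting a monomial basis analogous to the Feichtner--Yuzvinsky basis, or by noting the associated graded with respect to the filtration by powers of the augmentation recovers $A^\aug(M)$, whose rank is known from \Cref{Amatroid}(2)) and matches it with $\operatorname{rk} K^\aug(M) = \operatorname{rk} A^\aug(M)$ (equal because $X_{\Sigma_M^\aug}$ has a torus-invariant cell decomposition, or because its $K$-ring and Chow ring have the same rank as for any smooth complete toric variety with the relevant cohomology concentrated in even degrees). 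This "lowest-order-term" argument — the remark in the excerpt that the generators of $\cI_1', \cI_2'$ have lowest-order terms equal to those of $\cI_1, \cI_2$ — is the linchpin: it lets one transfer the known Hilbert-function/basis computation from the Chow side (\Cref{Amatroid}) to the $K$ side, and is where I would concentrate the real work.
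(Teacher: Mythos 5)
Your ``clean way to finesse this'' is precisely the paper's proof: surjectivity via line-bundle generation of the $K$-ring of a smooth toric variety together with Lemma~\ref{generation} and Theorem~\ref{Amatroid}, a direct check that the stated ideal lies in the kernel, and injectivity by the degree-filtration argument in which $A^\aug(M)$ surjects onto $\gr R$ (because the lowest-order terms of the generators of $\cI_1',\cI_2'$ are the generators of $\cI_1,\cI_2$) while $\rk K^\aug(M)=\rk A^\aug(M)$ by Lemma~\ref{tf}, so the surjection $R\to K^\aug(M)$ must be an isomorphism. Your flagged worry about $y_e\mapsto 0$ in part (1) is unfounded --- $e\notin\varnothing$ always, so $y_ex_\varnothing\in\cI_4^\aug$ and $D_{\rho_e^\aug}$ is disjoint from $D_{\rho_\varnothing^\aug}$ --- and the ``full toric presentation'' route you sketch first (with its sign slip $[\cL_E^\aug]=\prod_{F\neq E}(1-x_F)$, which should be $\prod_{F\neq E}(1-x_F)^{-1}$) is the hard way and is not needed.
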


\begin{proof}
The proof of statements (1) and (2) is essentially identical to the proof of Theorem \ref{FYK}, except that we use
Theorem \ref{AmatroidFY} in place of Theorem \ref{FYChow}, and we use the fact that the $K$-ring of a smooth toric variety is generated by classes of line bundles (see \cite[Lemma 2.2]{anderson2015operational}) in place of Lemma \ref{key}.
\end{proof}

We now compute the simplicial presentation of $K(M)$ and $K^{\aug}(M)$, and then use that to prove Theorem~\ref{thm:exceptionalmatroid}. We first state the simplicial presentation of $A(\WA)$ and $A(\Waug)$, which describes the kernel of $\Phsim$ and $\Phsim^{\aug}$. We defer the proof to Appendix~\ref{sec:augChow}.

For any pair of flats $F$ and $G$,
let $F \vee G$ denote the smallest flat containing both $F$ and $G$. 
Consider the following ideals in $S_\cA$:

\begin{equation} \label{eq:Jdef}
\begin{aligned}
\cJ_1 &\coloneqq \langle (u_F - u_{F \vee G})(u_G - u_{F \vee G}) \mid \textrm{$F$, $G$ arbitrary} \rangle\\
\cJ_2 &\coloneqq \langle u_F\mid \rk F = 1 \rangle \\
\cJ_2^\aug &\coloneqq \langle u_F^2\mid \text{$\rk F = 1$} \rangle + \langle u_F(u_G - u_{F\vee G})\mid \text{$\rk F = 1$, $G$ arbitrary}\rangle.
\end{aligned}
\end{equation}

\begin{theorem}\label{Amatroid} 
Let $M$ be a loopless matroid.
\begin{itemize}
\item[(1)] The map 
$\Phsim$ is surjective with kernel $\cJ_1+\cJ_2$.
\item[(2)] The map 
$\Phsim^\aug$ is surjective with kernel $\cJ_1+\cJ_2^\aug$.
\end{itemize}
\end{theorem}

\begin{proof}[Proof of Theorem \ref{thm:exceptionalmatroid}]
We prove only the augmented case; the non-augmented case is similar.
Define a map $\kappa_M\colon  S_M\to K^\aug(M)$ by putting 
$$\kappa_M(u_F) =  1 - \prod_{F\subset G} (1-\tau^\aug_G)^{-1} = 1 - \prod_{F\subset G} (1 + \tau^\aug_G + (\tau^\aug_G)^2 + \cdots).$$
Our first task will be to show that $\kappa$ vanishes on $\cJ_1+\cJ_2^\aug$, and therefore descends to a map from $A^\aug(M)$ to $K^\aug(M)$.
We will make use of Theorem \ref{Kmatroid}(2), which says that
$\tau^\aug_F\tau^\aug_G = 0$ for any incomparable $F$ and $G$, 
\begin{equation*}\label{eq:teq1} \prod_F (1-\tau^\aug_F) = 1, \qquad \text{and} \qquad \Psi_{FY}^\aug(y_e) = 1 - \prod_{e\notin G} (1 - \tau_G^\aug).\end{equation*}
The later equation implies that $K^{\aug}(M)$ is generated by the $\tau_G^{\aug}$. Using the relations in $\mathcal{I}_4^{\aug}$, we get that
\begin{equation}\label{eq:teq2} \tau^\aug_F\left(1-\prod_{e\notin G}(1-\tau^\aug_G)\right) = 0\end{equation}
for any $e\notin F$.

We begin by checking that $\kappa_M$ vanishes on a generator of $\cJ_1$.  We have
\begin{eqnarray*}
&& \kappa_M\Big((u_F-u_{F\vee G})(u_G-u_{F\vee G})\Big)\\
&=& \left(\prod_{F\subset H}\left(1-\tau^\aug_H\right)^{-1} - \prod_{F\vee G\subset I}\left(1-\tau^\aug_I\right)^{-1}\right)
\left(\prod_{G\subset J}\left(1-\tau^\aug_J\right)^{-1} - \prod_{F\vee G\subset I}\left(1-\tau^\aug_I\right)^{-1}\right)\\
&=&  \prod_{F\vee G\subset I}\left(1-\tau^\aug_I\right)^{-2} \left(\prod_{F\subset H \subsetneq F\vee G}\left(1-\tau^\aug_H\right)^{-1} - 1\right)
\left(\prod_{G \subset J \subsetneq F\vee G}\left(1-\tau^\aug_J\right)^{-1} - 1\right),
\end{eqnarray*}
which vanishes because $H$ and $J$ are incomparable for any $H$ appearing in the second product and $J$ appearing in the third
product.

Next, we check that $\kappa_M$ vanishes on a generator of $\cJ_2^\aug$.  
Fix a flat $F$ of rank 1 and an element $e\in F$, so that
$$\kappa_M(u_F) = 1 - \prod_{F\subset H}\left(1-\tau^\aug_H\right)^{-1} = 1 - \prod_{e\in H}\left(1-\tau^\aug_H\right)^{-1} = 1 - \prod_{e\notin K}\left(1-\tau^\aug_K\right).$$
Then $$\kappa_M(u_F^2) = \left(1 - \prod_{e\in H}\left(1-\tau^\aug_H\right)^{-1} \right)\left(1 - \prod_{e\notin K}\left(1-\tau^\aug_K\right)\right),$$
which vanishes by Equation \eqref{eq:teq2}.  Now for any flat $G$, we have
\begin{eqnarray*}&&\kappa_M\Big(u_F(u_G-u_{F\vee G})\Big)\\ &=& 
\left(1 - \prod_{e\notin K}\left(1-\tau^\aug_K\right)\right)
\prod_{F\vee G\subset I}\left(1-\tau^\aug_I\right)^{-1}
\left(\prod_{G \subset J \subsetneq F\vee G}\left(1-\tau^\aug_J\right)^{-1} - 1\right),
\end{eqnarray*}
which also vanishes by Equation \eqref{eq:teq2} because $e\notin J$ for any $J$ appearing in the last product.

We have now proved that $\kappa_M$ descends to a homomorphism $\overline{\kappa}_M\colon A^\aug(M)\to K^\aug(M)$.  
M\"{o}bius inversion tells us that $$1-\tau_F^\aug = \prod_{F\subset G}\Big(1-\overline{\kappa}_M(h_G)\Big)^{-\mu(F,G)},$$
where $\mu$ is the M\"obius function on the lattice of flats of $M$.  This, along with the surjectivity statement of Theorem \ref{Kmatroid}(2),
implies that $\overline{\kappa}_M$ is surjective.
By Lemma \ref{tf} applied to the variety $X_{\Sigma_M^\aug}$, $A^\aug(M)$ and $K^\aug(M)$ are free abelian groups of the same rank, thus $\overline{\kappa}_M$
is an isomorphism.
We may then take $\zeta_M^\aug$ to be the inverse of $\overline{\kappa}_M$.
\end{proof}

\begin{proposition}\label{prop:commute}
Let $M$ be the matroid associated with a hyperplane arrangement $\mathcal{A}$. The following diagrams commute:
\begin{center}
\begin{tikzcd}
K(M) \arrow[r, "\zeta_M"] \arrow[d]
& A(M) \arrow[d] &&K^{\aug}(M) \arrow[r, "\zeta_M^{\aug}"] \arrow[d]
& A^{\aug}(M) \arrow[d] \\
K(\WA) \arrow[r, "\zeta_{\mathcal{A}}"]
& A(\WA)&&K(\WA^{\aug}) \arrow[r, "\zeta_{\mathcal{A}}^{\aug}"]
& A(\WA^{\aug}).
\end{tikzcd}
\end{center}
\end{proposition}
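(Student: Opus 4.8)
The plan is to check commutativity by evaluating both composites on a generating set and using that all four maps in each square are ring homomorphisms. Consider the non-augmented square; the augmented case is identical. The left vertical map is the isomorphism $K(M)\cong K(\WA)$ of Proposition~\ref{K is combinatorial}, the right vertical map is the isomorphism $A(M)\cong A(\WA)$ of \cite{FY}, and under these identifications $\tau_F\in K(M)$ is sent to $[\cO_{D_F}]\in K(\WA)$ for every nonempty proper flat $F$, by Remark~\ref{Ksame}, while $t_F\in A(M)$ is sent to $[D_F]\in A(\WA)$, by Remark~\ref{Asame}. Since $K(M)$ is generated as a ring by the classes $\tau_F$ (this follows from Theorem~\ref{Kmatroid}(1): the relations $\cI_2'$ express each generator, and $\cI_4$ kills the $y_e$, so the $\tau_F$ and the single extra class coming from $x_E$ suffice — in fact all of $K(M)$ is generated by the $\{\tau_F\}$ together with $1-[\cL_E]$, and in the wonderful variety these restrict to the $\{[\cO_{D_F}]\}$ and $1-[\cL_E]$), it is enough to check that both composites agree on these generators.

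The key computation is to identify where $\zeta_M$ and $\zeta_\cA$ each send the generators, or rather their inverses. By Theorem~\ref{thm:exceptionalmatroid}, $\zeta_M^{-1}(h_F) = 1 - \prod_{F\subset G}(1-\tau_G)^{-1} = \Pssim(u_F)$, and by the characterization in Theorem~\ref{exceptional} together with the algebraic description of $\zeta_\cA$, we have $\zeta_\cA^{-1}(h_F) = \eta_F$, while the geometric simplicial map satisfies $\Pssim(u_F)=\eta_F$ in $K(\WA)$. So the point is to verify that the left vertical isomorphism $K(M)\to K(\WA)$ carries $1-\prod_{F\subset G}(1-\tau_G)^{-1}$ to $1-\prod_{F\subset G}(1-[\cO_{D_G}])^{-1}$, which is immediate from $\tau_G\mapsto[\cO_{D_G}]$ (Remark~\ref{Ksame}) and the fact that ring homomorphisms commute with the formal expressions $1-\prod(1-\cdot)^{-1}$ (using nilpotence of the $\tau_G$ so the geometric series terminate). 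Then the right vertical isomorphism $A(M)\to A(\WA)$ carries $h_F\in A(M)$ to $h_F\in A(\WA)$, since $h_F = -\sum_{F\subset G} t_G$ by Equation~\eqref{the simple direction} and $t_G\mapsto [D_G]$, matching the definition $h_F = -\sum_{F\subset G}[D_G]$ in $A(\WA)$ — equivalently $h_F=c_1(\cL_F)$ on both sides, using $\cL_F=\cL_F^\aug|_{\WA}$. Putting these together: going right-then-down sends $\zeta_M^{-1}(h_F)=\Pssim(u_F)\in K(M)$ down to $\eta_F\in K(\WA)$, and going down-then-right the same element $\Pssim(u_F)$ maps down to $\eta_F$ and then $\zeta_\cA$ sends $\eta_F$ to $h_F$; comparing, both composites send $\eta_F\in K(\WA)$ to $h_F\in A(\WA)$, i.e.\ the two isomorphisms $K(\WA)\to A(\WA)$ obtained by the two paths agree on the $\eta_F$.

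To finish, I would note that the $\eta_F$ generate $K(\WA)$ by Corollary~\ref{simplicialK} (equivalently Corollary after Theorem~\ref{thm:exceptionalmatroid}), so agreement on the $\eta_F$ forces the two ring homomorphisms $K(\WA)\to A(\WA)$ to coincide, which is precisely commutativity of the square. The augmented square is handled the same way, replacing $\WA$ by $\Waug$, $\tau_F$ by $\tau_F^\aug$, $\eta_F$ by $\eta_F^\aug$, $h_F$ by $h_F^\aug$, citing the augmented halves of Proposition~\ref{K is combinatorial}, Remarks~\ref{Asame} and \ref{Ksame}, Theorem~\ref{thm:exceptionalmatroid}, and Corollary~\ref{simplicialK}.

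\begin{proof}
We treat the non-augmented square; the augmented case is identical, replacing $\WA$ by $\Waug$ and each object by its augmented analogue. All four maps in the square are ring homomorphisms, the two vertical maps are isomorphisms (Proposition~\ref{K is combinatorial} and \cite{FY}), and under these isomorphisms $\tau_F\mapsto[\cO_{D_F}]$ and $t_F\mapsto[D_F]$ for every nonempty proper flat $F$, by Remarks~\ref{Ksame} and \ref{Asame}. By Theorem~\ref{thm:exceptionalmatroid}, $\zeta_M^{-1}(h_F)=1-\prod_{F\subset G}(1-\tau_G)^{-1}$; since the $\tau_G$ are nilpotent, this is a polynomial expression in the $\tau_G$, so the left vertical map sends it to $1-\prod_{F\subset G}(1-[\cO_{D_G}])^{-1}=\Pssim(u_F)=\eta_F$ in $K(\WA)$, where the last equality is Corollary~\ref{simplicialK} combined with \eqref{mult} (or Theorem~\ref{exceptional}). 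Hence the composite $K(M)\to K(\WA)\xrightarrow{\zeta_\cA} A(\WA)$ sends $\zeta_M^{-1}(h_F)$ to $\zeta_\cA(\eta_F)=h_F$ by Theorem~\ref{exceptional}. On the other hand, the composite $K(M)\xrightarrow{\zeta_M} A(M)\to A(\WA)$ sends $\zeta_M^{-1}(h_F)$ first to $h_F\in A(M)$ and then, using $h_F=-\sum_{F\subset G}t_G$ from \eqref{the simple direction} and $t_G\mapsto[D_G]$, to $-\sum_{F\subset G}[D_G]=h_F\in A(\WA)$. Thus the two composites agree on every class of the form $\zeta_M^{-1}(h_F)$. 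Since the $h_F$ generate $A(M)$ as a ring, the classes $\zeta_M^{-1}(h_F)$ generate $K(M)$, so the two composites coincide.
\end{proof}
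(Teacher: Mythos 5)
Your overall strategy is the same as the paper's: identify $K(M)$ with $K(\WA)$ via Proposition~\ref{K is combinatorial}, note that $\zeta_M$ is characterized by sending $1-\prod_{F\subset G}(1-\tau_G)^{-1}$ to $h_F$ while $\zeta_\cA$ is characterized by sending $\eta_F$ to $h_F$, and reduce everything to a single identity in $K(\WA)$. But at exactly that point your proof has a circular citation rather than an argument. The entire content of the proposition is the equality
\[
\eta_F \;=\; 1-\prod_{F\subset G}\bigl(1-\tau_G\bigr)^{-1}
\]
in $K(\WA)$, i.e.\ that the class $1-[\cL_F^{-1}]$ defined via the projection $\pi_F$ agrees with the expression in the Feichtner--Yuzvinsky classes $[\cO_{D_G}]$ and $1-[\cL_E]$. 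You justify this by writing $1-\prod_{F\subset G}(1-[\cO_{D_G}])^{-1}=\Pssim(u_F)=\eta_F$ and citing ``Corollary~\ref{simplicialK} combined with \eqref{mult}.'' This conflates the two different maps both named $\Pssim$: Equation~\eqref{mult} \emph{defines} an algebraic map $S_M\to K(M)$ by the product formula, while Corollary~\ref{simplicialK} concerns the geometric map $S_\cA\to K(\WA)$ that sends $u_F$ to $\eta_F$ \emph{by definition}. That these two maps coincide is precisely what the paper's remark derives \emph{from} Proposition~\ref{prop:commute}; neither Corollary~\ref{simplicialK} nor Theorem~\ref{exceptional} asserts any relation between $\eta_F$ and the classes $[\cO_{D_G}]$. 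So your proof assumes the conclusion. (A secondary slip: the factor with $G=E$ is $(1-\tau_E)^{-1}=[\cL_E]^{-1}$, not $(1-[\cO_{D_E}])^{-1}$; there is no divisor $D_E$.)

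What is missing is a genuinely geometric input, and it is short: on $\WA$ one has the line bundle isomorphism $\cL_F\cong\cL_E\otimes\bigotimes_{F\subset G\subsetneq E}\cO(-D_G)$, which can be verified on first Chern classes using $h_F=-\sum_{F\subset G}t_G$ (Equation~\eqref{the simple direction}) together with $t_E=-c_1(\cL_E)$ and $t_G=[D_G]$. Inverting and using $\tau_E=1-[\cL_E]$, $\tau_G=1-[\cO(-D_G)]$, and $\eta_F=1-[\cL_F^{-1}]$ then yields the displayed identity, after which the rest of your argument (agreement on a generating set of ring homomorphisms) goes through. This is exactly the paper's proof; without that line-bundle identity your argument does not close.
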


\begin{proof}
We do the non-augmented case.  Since $\zeta_\cA$ takes $\eta_F$ to $h_F$ and $\zeta_M$ is characterized in the statement of Theorem \ref{thm:exceptionalmatroid}, 
we need to show that
$$\eta_F = 1 - \prod_{F\subset G} (1-\tau_G)^{-1}.$$
By considering first Chern classes, we have $$\mathcal{L}_F \cong  \mathcal{L}_E \otimes \bigotimes_{F \subset G \subsetneq E} \mathcal{O}(-D_G)$$
as line bundles on the wonderful variety $\WA$.
Since $\tau_E = 1 - [\mathcal{L}_E]$, $\tau_G = 1 - [\mathcal{O}(-D_G)]$ for $G \not= E$, and $\eta_F =1 - [\mathcal{L}_F^{-1}]$, the result follows.
\end{proof}

We now discuss an analogue of the projection formula that will allow us to interpret the Euler characteristic maps on matroid $K$-rings using geometry.  Let $\cU_E$ denote the Boolean arrangement on the ground set $E$, consisting of the coordinate
hyperplanes in $\mathbb{F}^E$, realizing the Boolean matroid $U_E$.  The wonderful variety of $\mathcal{U}_E$ is a toric variety, called the \textbf{permutohedral variety}, which we denote $X_E$. The augmented wonderful variety is also a toric variety, called the \textbf{stellahedral variety}, which we denote $X_E^{\aug}$. For any matroid $M$, there is an open embedding $\iota \colon X_{\Sigma_M} \hookrightarrow X_{E}$, so we have restriction maps $\iota^* \colon K(X_{E}) \to K(M)$ and $\iota^* \colon A(X_E) \to A(M)$. Similarly, there is an open embedding $\iota^{\aug} \colon X_{\Sigma_M^{\aug}} \to X_E^{\aug}$ and corresponding restriction maps on $K$-rings and Chow rings. These maps are characterized by the property that
$$\iota^* t_S = \begin{cases} t_S, & S \text{ is a flat of }M \\ 0, & \text{otherwise} \end{cases} \and \iota^* \tau_S = \begin{cases} \tau_S, & S \text{ is a flat of }M \\ 0, & \text{otherwise,} \end{cases}$$
and similarly in the augmented setting.

\begin{lemma}\label{lem:resformula}
For any subset $S \subset E$, let $\bar{S}$ be the closure of $S$ in $M$. We have
\begin{equation*}
\begin{aligned}
\iota^*(\eta_S) &= \eta_{\bar S}\in K(M) & \quad \quad 
(\iota^\aug)^*(\eta^\aug_S) &= \eta^\aug_{\bar S}\in K^\aug(M)\\
\iota^*(h_S) &= h_{\bar S} \in A(M) & 
(\iota^\aug)^*(h^\aug_S) &= h^\aug_{\bar S}\in A^\aug(M).
\end{aligned}
\end{equation*}
\end{lemma}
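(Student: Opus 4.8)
The statement concerns the behavior of the simplicial generators under the open restriction $\iota^* \colon K(X_E) \to K(M)$ (and its Chow and augmented analogues). The plan is to reduce everything to the Feichtner--Yuzvinsky generators, where the restriction maps are already understood by their defining property $\iota^* t_S = t_S$ or $0$ according to whether $S$ is a flat of $M$, and $\iota^*\tau_S = \tau_S$ or $0$ likewise. First I would recall from Equation \eqref{add} and Equation \eqref{mult} the formulas expressing the simplicial generators in terms of the Feichtner--Yuzvinsky generators: on $X_E$, every subset $S\subseteq E$ is a flat of the Boolean matroid $U_E$, so $h_S = -\sum_{S\subseteq G} t_G$ (sum over all subsets $G$ containing $S$) and $\eta_S = 1 - \prod_{S\subseteq G}(1-\tau_G)^{-1}$, the products and sums running over all subsets of $E$ containing $S$.

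The key step is then a bookkeeping computation. Applying $\iota^*$ to $h_S = -\sum_{S \subseteq G} t_G$, the ring homomorphism property gives $\iota^*(h_S) = -\sum_{S\subseteq G} \iota^*(t_G) = -\sum_{\substack{S\subseteq G \\ G \text{ a flat of } M}} t_G$, since $\iota^* t_G = 0$ for non-flats. Now I must identify this sum with $h_{\bar S} = -\sum_{\bar S \subseteq G}t_G$, where the latter sum runs over flats $G$ of $M$ containing the $M$-closure $\bar S$. This is exactly the statement that, for a flat $G$ of $M$, one has $S \subseteq G$ if and only if $\bar S \subseteq G$ --- which holds because $\bar S$ is the smallest flat of $M$ containing $S$. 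So the two index sets coincide and $\iota^*(h_S) = h_{\bar S}$. The same argument, with products in place of sums, handles $\iota^*(\eta_S) = \eta_{\bar S}$: one has $\iota^*(\eta_S) = 1 - \prod_{S\subseteq G}\iota^*\bigl((1-\tau_G)^{-1}\bigr) = 1 - \prod_{\substack{S\subseteq G\\ G\text{ a flat of }M}}(1-\tau_G)^{-1}$, using that $\iota^*$ is a ring map and that $\iota^*(1-\tau_G)=1$ when $G$ is not a flat of $M$ (so its inverse restricts to $1$); then the same index-set identification gives $\eta_{\bar S}$. The augmented cases are word-for-word the same, using the augmented versions of \eqref{add}, \eqref{mult}, and the characterization of $(\iota^\aug)^*$.

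The only genuine subtlety, and the step I expect to require the most care, is justifying that $\iota^*$ is well-defined and behaves as claimed on these infinite-looking products and sums --- but this is not really an obstacle: all the sums and products are finite once expanded (each $\tau_G$ and $t_G$ is nilpotent, lying in the first piece of the coniveau filtration, so $(1-\tau_G)^{-1}$ is a polynomial in $\tau_G$), and the characterization of $\iota^*$ on the Feichtner--Yuzvinsky generators is given in the paragraph preceding the lemma. One small point to verify is that the displayed characterization of $\iota^*$ on $K(X_E)$ does determine $\iota^*$ as a ring homomorphism, i.e. that the $\tau_S$ generate $K(X_E)$ --- but this follows from Theorem~\ref{Kmatroid}(1) applied to the Boolean matroid (or from the relation $\Psi_{FY}(y_e) = 0$ rewritten as in the proof of Theorem~\ref{thm:exceptionalmatroid}). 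With these observations in hand the proof is a short formal manipulation, so I would present it compactly.
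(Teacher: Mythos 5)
Your proposal is correct and matches the paper's proof: the paper likewise expands $h_S = -\sum_{S\subset S'} t_{S'}$, applies the characterization of $\iota^*$ on the Feichtner--Yuzvinsky generators to kill the non-flat terms, and identifies the resulting sum with $h_{\bar S}$ via the observation that a flat of $M$ contains $S$ if and only if it contains $\bar S$, with the remaining three cases declared similar. Your write-up just makes explicit the multiplicative version for $\eta_S$ and the finiteness of the products, which the paper leaves implicit.
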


\begin{proof}
We have that
$$\iota^* h_S = -\iota^* \sum_{S \subset S'} t_{S'} = -\sum_{\substack{S \subset F\\ \text{$F$ a flat}}} t_F = h_{\bar{S}}.$$
The other statements are similar. 
\end{proof}

There exist unique Chow classes
$$\Delta_M\in A(U_E)\and \Delta_M^\aug\in A^\aug(U_E),$$
called the {\bf Bergman class} and {\bf augmented Bergman class}, respectively, with the properties that,
for any $\xi\in A(U_E)$ and $\xi^\aug\in A^\aug(U_E)$, we have
$$\deg_M(\iota^*\xi) = \deg_{U_E}(\xi\cdot \Delta_M)\and \deg_M((\iota^\aug)^*\xi^\aug) = \deg_{U_E}(\xi^\aug\cdot \Delta_M^\aug).$$
In the non-augmented setting, this is proved in \cite[Theorem 4.2.1]{BES}. The same argument works in the augmented setting, using \cite[Theorem 1.11]{EurHuhLarson}.

\begin{proposition}
\label{prop:compatibility}
The following diagrams commute:
	\begin{center}
    \begin{tikzcd}
	    K(U_E) \ar[r, "\zeta_{U_E}"] \ar[d, "\iota^*"] & A(U_E) \ar[d, "\iota^*"] & & K^\aug(U_E) \ar[r, "\zeta_{U_E}^{\aug}"] \ar[d, "(\iota^\aug)^*"] & A^\aug(U_E) \ar[d, "(\iota^\aug)^*"]\\
	    K(M) \ar[r, "\zeta_{M}"] & A(M) & &  K^{\aug}(M) \ar[r, "\zeta^{\aug}_{M}"] & A^{\aug}(M).
    \end{tikzcd}
    \end{center}
Additionally, for any $\xi \in K(U_E)$ and $\xi^{\aug} \in K^\aug(U_E)$, we have
$$\chi\big(M, \iota^*(\xi)\big) = \chi\left(U_E, \xi \cdot \zeta_{U_E}^{-1}\left(\Delta_M\right)\right), \qquad \text{and}$$
$$\chi^{\aug}\big(M, (\iota^\aug)^*\left(\xi^{\aug}\right)\big) = \chi\left(U_E, \xi^{\aug} \cdot (\zeta^{\aug}_{U_E})^{-1}\left(\Delta_M^{\aug}\right)\right).$$
\end{proposition}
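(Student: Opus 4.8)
The plan is to prove the commutativity of the left-hand (non-augmented) square first, and then observe that the augmented square and both Euler characteristic identities follow by the same reasoning. The key point is that both $\zeta_{U_E}$ and $\zeta_M$ are characterized by their effect on simplicial generators: by Theorem \ref{thm:exceptionalmatroid}, $\zeta_M^{-1}(h_F) = 1 - \prod_{F \subset G}(1-\tau_G)^{-1} = \Pssim(u_F)$, and likewise for $U_E$. Since $\iota^* \colon A(U_E) \to A(M)$ is a ring homomorphism sending $h_S$ to $h_{\bar S}$ (Lemma \ref{lem:resformula}), and since the simplicial generators $\{h_F \mid F \text{ a flat of }M\}$ generate $A(M)$ by Theorem \ref{Amatroid}(3), it suffices to chase a single simplicial generator $h_F$ of $A(M)$ around the square. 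Starting from $h_F \in A(M)$: pulling back along $\iota^*$ from $A(U_E)$, we may lift $h_F$ to $h_F \in A(U_E)$ (any flat of $M$ is in particular a subset of $E$ with closure $F$ in $M$), apply $\zeta_{U_E}^{-1}$ to get $1 - \prod_{F \subset G, \, G \in U_E}(1-\tau_G)^{-1} \in K(U_E)$, and then apply $\iota^*$. By Lemma \ref{lem:resformula} (applied to the $K$-theoretic statement, or directly since $\iota^* \tau_S = \tau_S$ if $S$ is a flat of $M$ and $0$ otherwise), this lands on $1 - \prod_{F \subset G, \, G \text{ a flat of }M}(1-\tau_G)^{-1}$, which is exactly $\zeta_M^{-1}(h_F)$. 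Going the other way around the square gives the same element, so the square commutes.

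For the augmented square, I would repeat the identical argument, using Theorem \ref{Amatroid}(4) for the generation of $A^\aug(M)$ by the classes $h_F^\aug$, the augmented half of Theorem \ref{thm:exceptionalmatroid} for the characterization of $\zeta_M^\aug$ and $\zeta_{U_E}^\aug$, and the augmented halves of Lemma \ref{lem:resformula}. No new ideas are needed.

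For the two Euler characteristic identities, I would use the defining formula $\chi(M, \xi) = \deg_M(\zeta_M(\xi)/(1-h_E))$ together with the Bergman class identity $\deg_M(\iota^* \alpha) = \deg_{U_E}(\alpha \cdot \Delta_M)$ for $\alpha \in A(U_E)$. Given $\xi \in K(U_E)$, commutativity of the already-established square gives $\zeta_M(\iota^* \xi) = \iota^* \zeta_{U_E}(\xi)$, so
\begin{align*}
\chi\big(M, \iota^*(\xi)\big) &= \deg_M\!\left(\frac{\iota^* \zeta_{U_E}(\xi)}{1 - h_E}\right) = \deg_M\!\left(\iota^*\!\left(\frac{\zeta_{U_E}(\xi)}{1 - h_E}\right)\right) = \deg_{U_E}\!\left(\frac{\zeta_{U_E}(\xi)}{1 - h_E}\cdot \Delta_M\right),
\end{align*}
where the middle step uses that $\iota^* h_E = h_E$ (the flat $E$ of $U_E$ has closure $E$ in $M$, so Lemma \ref{lem:resformula} applies, and $h_E$ is nilpotent so $1/(1-h_E)$ makes sense and commutes with the ring map $\iota^*$). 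Writing $\Delta_M = \zeta_{U_E}(\zeta_{U_E}^{-1}(\Delta_M))$ and using that $\zeta_{U_E}$ is a ring isomorphism, the last expression equals $\deg_{U_E}\big(\zeta_{U_E}(\xi \cdot \zeta_{U_E}^{-1}(\Delta_M))/(1-h_E)\big) = \chi\big(U_E, \xi \cdot \zeta_{U_E}^{-1}(\Delta_M)\big)$, as desired. The augmented identity is proved identically, with $h_E^\aug$, $\Delta_M^\aug$, and the augmented Bergman class formula in place of their non-augmented counterparts.

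The main obstacle, such as it is, is bookkeeping rather than mathematics: one must be careful that the products $\prod_{F \subset G}$ appearing in the formula for $\zeta^{-1}$ range over flats of the \emph{ambient} matroid in each case ($U_E$ upstairs, $M$ downstairs), and that $\iota^*$ correctly kills the $\tau_G$ indexed by subsets that are flats of $U_E$ but not of $M$ — this is exactly what makes the two products match up after restriction. The one genuine input beyond formal manipulation is the $K$-theoretic half of Lemma \ref{lem:resformula} (that $\iota^*(\eta_S) = \eta_{\bar S}$), but this is already established; equivalently, one can use that $\iota^* \colon K(X_E) \to K(M)$ is the characterized ring map with $\iota^* \tau_S = \tau_S$ or $0$ according to whether $S$ is a flat of $M$.
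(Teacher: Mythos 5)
Your proposal is correct and follows essentially the same route as the paper: commutativity of the squares is verified on simplicial generators via Lemma~\ref{lem:resformula} together with the characterization of $\zeta_M$ in Theorem~\ref{thm:exceptionalmatroid}, and the Euler characteristic identities follow from the same four-step degree computation using $\deg_M(\iota^*\alpha)=\deg_{U_E}(\alpha\cdot\Delta_M)$ and $\iota^*h_E=h_E$. One small correction to your framing: to see that two ring homomorphisms out of $A(U_E)$ agree, you must check them on generators of the \emph{source} $A(U_E)$, namely $h_S$ for every nonempty $S\subset E$ (not just flats of $M$), rather than appealing to generation of the target $A(M)$; your computation applies verbatim to such $S$ (with $\bar{S}$ in place of $F$ downstairs, and with $\iota^*$ killing $\tau_T$ for non-flats $T$), so nothing further is needed.
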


\begin{proof}[Proof of Proposition \ref{prop:compatibility}]
Commutativity of the diagrams follows from Lemma~\ref{lem:resformula}.
For the second statement, we do the non-augmented case. For any $\xi \in K(U_E)$, 
we have 
	\begin{align*}
	\chi\big(M, \iota^*(\xi)\big)
	  &= \deg_M \left(\frac{\zeta_M\circ \iota^*(\xi)}{1 - h_E} \right) \\
	  &= \deg_M \left(\frac{\iota^* \circ\zeta_{U_E}(\xi)}{1 - h_E} \right) \\
      &= \deg_{U_E} \left( \frac{\zeta_{U_E}(\xi) \cdot \Delta_M}{1 - h_E}\right) \\
      &= \chi\left(U_E, \xi \cdot \zeta^{-1}_{U_E}(\Delta_M)\right). 
	\end{align*}
The augmented case is similar.
\end{proof}

As a corollary, we are to prove the following characterizing properties of $\zeta_{U_E}$ and $\zeta_{U_E}^\aug$, which demonstrate that they agree with the
maps denoted $\zeta$ in \cite[Corollary 10.6]{BEST} and \cite[Theorem 8b]{EurHuhLarson}. 

\begin{corollary}\label{cor:structuresheaf}
Let $\mathcal{A}$ be a hyperplane arrangement equipped with a choice of linear function cutting out each hyperplane, which induces embeddings $\WA\subset X_E$
and $\WA^{\aug}\subset X_E^\aug$. 
Then $$\zeta_{U_E}\big([\cO_{\WA}]\big) = [\WA]\and \zeta_{U_E}^\aug\big([\cO_{\Waug}]\big) = [\Waug].$$
\end{corollary}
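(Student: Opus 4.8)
The plan is to deduce the two identities from Proposition~\ref{prop:compatibility} together with the fact that they are already known for Boolean arrangements. First recall that when $\cA = \cU_E$ is itself the Boolean arrangement, $\WA = X_E$ and $\Waug = X_E^\aug$, so $[\cO_{\WA}] = 1 = [X_E]$ in $K(U_E)$ and $[\WA] = 1 = [X_E]$ in $A(U_E)$; thus $\zeta_{U_E}(1) = 1$, which is automatic since $\zeta_{U_E}$ is a ring isomorphism. This gives no content on its own, so instead I would run the argument as follows: fix an arbitrary realizable matroid $M$ realized by $\cA$, with induced embedding $\WA \subset X_E$ and open inclusion $\iota \colon X_{\Sigma_M} \hookrightarrow X_E$. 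The key input is the identity $\zeta_{U_E}^{-1}(\Delta_M) = [\cO_{X_{\Sigma_M}}]$ pushed into $K(U_E)$ — or rather its image under $\iota^*$ — which I would establish by combining the defining property of the Bergman class with the Hirzebruch--Riemann--Roch formula of Theorem~\ref{exceptional}.

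Concretely, the steps would be: (1) By Proposition~\ref{K is combinatorial} and the exceptional isomorphism $\zeta_{\cA}$ of Theorem~\ref{exceptional}, identify $\zeta_M \colon K(M) \to A(M)$ with $\zeta_{\cA}$; under this identification $\zeta_M([\cO_{\WA}])$ is the class $[\WA] \in A(M) = A(\WA)$ by Equation~\eqref{unit} in the proof of Theorem~\ref{exceptional}, which already computed $\zeta([\cO_{\WA}]) = \prod_{F\neq E}(1-s_F)\cdot[\WA]$ inside the ambient product of projective spaces and hence $\zeta_{\cA}([\cO_{\WA}]) = [\WA]$ in $A(\WA)$. (2) Now apply Proposition~\ref{prop:compatibility}: the square relating $\zeta_{U_E}$ and $\zeta_M$ commutes, so $\iota^*\big(\zeta_{U_E}([\cO_{X_{\Sigma_M}}])\big) = \zeta_M\big(\iota^*[\cO_{X_{\Sigma_M}}]\big)$. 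But $\iota^*[\cO_{X_E}] = [\cO_{X_{\Sigma_M}}]$ as $\iota$ is an open inclusion, so I actually want to compare $[\cO_{\WA}]$ as a class in $K(X_E)$ (via pushforward along $\WA \subset X_E$) with $[\WA] \in A(X_E)$. (3) The cleanest route: push $[\cO_{\WA}]$ forward to $K(X_E)$ and observe, using the projection/excess-intersection description and the fact that $\WA$ meets the torus-fixed strata of $X_E$ transversally in the pattern recorded by $\Sigma_M$ (this is exactly what makes the restriction to $X_{\Sigma_M}$ work), that $\chi(X_E, [\cO_{\WA}]\cdot \xi) = \chi(M, \iota^*\xi)$ for all $\xi \in K(U_E)$; comparing with the last displayed formula in Proposition~\ref{prop:compatibility} forces $[\cO_{\WA}] = \zeta_{U_E}^{-1}(\Delta_M)$ in $K(U_E)$ by nondegeneracy of the Euler pairing, and then applying $\zeta_{U_E}$ and the uniqueness of $\Delta_M$ under the degree pairing (Poincaré duality for $A(U_E)$) gives $\zeta_{U_E}([\cO_{\WA}]) = \Delta_M = [\WA]$ in $A(U_E)$, which is the assertion of the corollary since $[\WA] \in A(U_E)$ is by definition the pushforward of the fundamental class under $\WA \subset X_E$.

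For the augmented case I would argue identically, replacing $X_E$ by $X_E^\aug$, $\Delta_M$ by $\Delta_M^\aug$, the HRR formula by its augmented counterpart, and using \cite[Theorem~1.11]{EurHuhLarson} where the non-augmented argument uses \cite[Theorem~4.2.1]{BES}; the proof of Proposition~\ref{prop:compatibility} already set up both cases in parallel, so nothing new is needed.

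The main obstacle I anticipate is step~(3): pinning down the precise relation between the $K$-class (and Chow class) of $\WA$ inside $X_E$ and the Bergman class $\Delta_M$, i.e.\ verifying that $[\cO_{\WA}]$ really is the unique $K$-class satisfying $\chi(X_E, [\cO_{\WA}] \cdot \xi) = \chi(M, \iota^*\xi)$. The comparison of Euler characteristics requires knowing that restriction of line bundles (equivalently, of the $\sigma_S$) from $X_E$ to $\WA$ factors through $X_{\Sigma_M}$ in a way compatible with $\iota^*$, which is precisely Lemma~\ref{lem:resformula}; once that compatibility is in hand, the degree/Euler-pairing computations are formal, and nondegeneracy of the respective pairings (Poincaré duality for $A$, \cite[Proposition~6.3]{anderson2015operational} for $K$) closes the argument. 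An alternative, possibly shorter, route would be to cite the fact (used in step~(1)) that $\zeta_{\cA}([\cO_{\WA}]) = [\WA]$ together with the already-proven commuting square of Proposition~\ref{prop:compatibility} and the characterization of $\Delta_M$ via pushforward of $1$ along $\WA \hookrightarrow X_E$, bypassing the explicit Euler-pairing manipulation entirely; I would try that first and fall back on the pairing argument only if a subtlety about identifying $[\WA] \in A(X_E)$ with the pushforward forces it.
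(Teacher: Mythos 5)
Your step (3) is exactly the paper's proof: one shows $\chi(X_E,\xi\cdot[\cO_{\WA}]) = \chi(\WA,\iota^*\xi) = \chi(M,\iota^*\xi) = \chi(X_E,\xi\cdot\zeta_{U_E}^{-1}(\Delta_M))$ via the projection formula and Proposition~\ref{prop:compatibility}, concludes $[\cO_{\WA}]=\zeta_{U_E}^{-1}(\Delta_M)$ from nondegeneracy of the Euler pairing, and applies $\zeta_{U_E}$ together with $\Delta_M=[\WA]$. Steps (1)--(2) are unnecessary detours (there $[\cO_{\WA}]$ is just the unit of $K(\WA)$, so $\zeta_{\cA}$ sends it to $[\WA]=1$ tautologically), but they do not affect the correctness of the argument.
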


\begin{proof}
Let $M$ be the matroid associated with $\cA$.
For any $\xi \in K(X_E) = K(U_E)$, we have
$$\chi\left(X_E, \xi \cdot \zeta_{U_E}^{-1}(\Delta_M)\right)
= \chi\left(U_E, \xi \cdot \zeta_{U_E}^{-1}(\Delta_M)\right) = \chi(M, \iota^*(\xi)) = \chi(\WA, \iota^*(\xi)) 
= \chi(X_E, \xi \cdot [\cO_{\WA}]),$$
where the second equality comes from Proposition \ref{prop:compatibility} and the fourth comes from the projection formula
on the permutohedral variety.
The nondegeneracy of the pairing $(x, y) \mapsto \chi(X_E, xy)$ \cite[Theorem 1.3]{anderson2015operational} 
implies that $\zeta_{U_E}^{-1}(\Delta_M) = [\cO_{\WA}]$. 
Applying $\zeta_{U_E}$, we find that $$[\WA] = \Delta_M = \zeta_{U_E}\big([\cO_{\WA}]\big).$$  The augmented case is similar.
\end{proof}

Motivated by Corollary \ref{cor:structuresheaf}, we define
$$[\cO_M] \coloneqq \zeta_{U_E}^{-1}(\Delta_M) \in K(U_E)\and [\cO_M^{\aug}] \coloneqq (\zeta_{U_E}^{\aug})^{-1}(\Delta_M^{\aug}) \in K^\aug(U_E)$$ for any loopless matroid $M$. 
Proposition~\ref{prop:compatibility} may then be interpreted as a purely combinatorial projection formula. 

We conclude this section with the following lemma,
which we will need in Section \ref{sec:FY Snap}. 

\begin{lemma}\label{exceptional FY}
The exceptional isomorphism $\zeta_M\colon K(M)\to A(M)$ has the following properties:
\begin{itemize}
\item $\zeta_M(\tau_\varnothing) = t_\varnothing$
\item $\zeta_M(\tau_E) = \frac{t_E}{1+t_E}$
\item For any flat $F$, $\zeta_M(\tau_F) = t_F + \text{terms of degree at least 2}$.
\end{itemize}
\end{lemma}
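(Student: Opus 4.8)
\textbf{Proof proposal for Lemma~\ref{exceptional FY}.}
The plan is to exploit the characterization of $\zeta_M$ from Theorem~\ref{thm:exceptionalmatroid}, namely that $\zeta_M^{-1}(h_F) = 1 - \prod_{F\subset G}(1-\tau_G)^{-1}$, together with the relation $h_F = -\sum_{F\subset G} t_G$ from Equation~\eqref{the simple direction}. The strategy is to invert these relations. First I would observe that from Equation~\eqref{the simple direction}, M\"{o}bius inversion on the lattice of flats gives $t_F = -\sum_{F\subset G}\mu(F,G)\, h_G$, where $\mu$ is the M\"{o}bius function. Applying the ring homomorphism $\zeta_M^{-1}$ and using the characterization of $\zeta_M^{-1}$ on the $h_G$, we get
$$\zeta_M^{-1}(t_F) = -\sum_{F\subset G}\mu(F,G)\left(1 - \prod_{G\subset H}(1-\tau_H)^{-1}\right).$$
Since $\sum_{F\subset G}\mu(F,G) = 0$ unless $F$ is the top flat $E$ (in which case the sum is $1$, the single term $G=F=E$), the constant terms collapse, and for $F\neq E$ this simplifies to $\zeta_M^{-1}(t_F) = \sum_{F\subset G}\mu(F,G)\prod_{G\subset H}(1-\tau_H)^{-1}$; by the same argument that appears in the proof of Theorem~\ref{thm:exceptionalmatroid}, this telescopes to $1-\tau_F = \prod_{F\subset G}(1-\zeta_M^{-1}(h_G))^{\dots}$ — more directly, one checks $\zeta_M^{-1}(t_F) = \tau_F$ for all proper $F$ by comparing the M\"{o}bius inversions of $h$ in terms of $t$ and of $1-\prod(1-\tau)^{-1}$ in terms of $\tau$. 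I would verify the three bullet points separately.

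For $\tau_\varnothing$: here $\varnothing$ is the minimal flat, and the computation above shows $\zeta_M^{-1}(t_\varnothing) = \tau_\varnothing$ directly, hence $\zeta_M(\tau_\varnothing) = t_\varnothing$. For $\tau_E$: from $h_E = -t_E$ we get $\zeta_M^{-1}(h_E) = -\zeta_M^{-1}(t_E)$; on the other hand the characterization gives $\zeta_M^{-1}(h_E) = 1 - (1-\tau_E)^{-1} = -\tau_E(1-\tau_E)^{-1} = -\tau_E - \tau_E^2 - \cdots$, so $\zeta_M^{-1}(t_E) = \tau_E(1-\tau_E)^{-1}$. Inverting this power series relation (valid since $\tau_E$ is nilpotent) yields $\tau_E = \zeta_M^{-1}(t_E)(1+\zeta_M^{-1}(t_E))^{-1}$, i.e. applying $\zeta_M$, $\zeta_M(\tau_E) = \frac{t_E}{1+t_E}$. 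For the third bullet: whether or not $F=E$, one has $\zeta_M^{-1}(h_F) = -\tau_F + (\text{higher order in the }\tau\text{'s})$ from the product expansion, and since $h_F = -t_F - \sum_{F\subsetneq G}t_G$ with each $t_G$ lying in positive degree, taking $\zeta_M^{-1}$ and comparing lowest-degree terms — using that $\zeta_M^{-1}$ respects the coniveau/degree filtration because its lowest-order part agrees with the identification of Chow with the associated graded of $K$ — gives $\zeta_M^{-1}(t_F) = \tau_F + (\text{higher order})$; applying $\zeta_M$ and noting $\zeta_M$ also preserves the filtration with lowest-order part the identity, we conclude $\zeta_M(\tau_F) = t_F + (\text{terms of degree} \geq 2)$.

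The main obstacle will be making precise the claim that $\zeta_M$ and $\zeta_M^{-1}$ are filtered maps whose associated graded is the identity, so that "lowest-order term" arguments are legitimate. This follows from the fact (used already in the proof of Proposition~\ref{K is combinatorial}) that $A(M)$ is the associated graded of $K(M)$ for the coniveau filtration, combined with the observation that $\zeta_M^{-1}(h_F) = -\tau_F \pmod{F_2 K(M)}$ and that the $h_F$ (equivalently the $t_F$, $\eta_F$, $\tau_F$) generate their respective rings; thus $\zeta_M$ sends $F_iK(M)$ into $A^{\geq i}(M)$ and induces the canonical isomorphism on graded pieces. Once this filtration compatibility is established, the three identities are routine power-series manipulations with nilpotent elements, so I would state the filtration fact as a short preliminary observation and then dispatch the three bullets in order.
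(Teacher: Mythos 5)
Your derivation of the second bullet is correct and complete: $h_E=-t_E$ and $\zeta_M^{-1}(h_E)=1-(1-\tau_E)^{-1}$ give $\zeta_M^{-1}(t_E)=\tau_E(1-\tau_E)^{-1}$, which inverts to $\zeta_M(\tau_E)=t_E/(1+t_E)$. Your treatment of the third bullet is essentially the paper's argument: multiplicative M\"obius inversion of $1-\zeta_M^{-1}(h_F)=\prod_{F\subset G}(1-\tau_G)^{-1}$ gives $\tau_F=-\sum_{F\subset G}\mu(F,G)\,\zeta_M^{-1}(h_G)+(\text{products of two or more such classes})$, and applying the ring map $\zeta_M$ together with $t_F=-\sum_{F\subset G}\mu(F,G)\,h_G$ and $h_G\in A^1(M)$ finishes it; no filtration compatibility of $\zeta_M$ is needed for this, and note that to first order $\zeta_M^{-1}(h_F)$ equals $-\sum_{F\subset G}\tau_G$, not $-\tau_F$.

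The gap is the first bullet. Your intermediate claim that $\zeta_M^{-1}(t_F)=\tau_F$ for all proper $F$, obtained ``by comparing the M\"obius inversions,'' is false: $t_F$ is the \emph{additive} M\"obius inverse of the classes $-h_G$, whereas $\tau_F$ is the \emph{multiplicative} M\"obius inverse of the classes $1-\zeta_M^{-1}(h_G)$, and these agree only modulo terms of degree at least $2$. Concretely, take $M$ Boolean on $\{1,2,3\}$ and $F=\{1,2\}$, so that $\WA$ is $\mathbb{P}^2$ blown up at three points. Using $\cO(D_F)\cong\cL_F^{-1}\otimes\cL_E$ one finds $\tau_F=1-(1-\eta_F)^{-1}(1-\eta_E)=\eta_E-\eta_F+\eta_F\eta_E-\eta_F^2$, hence $\zeta_M(\tau_F)=h_E-h_F+h_Fh_E-h_F^2=t_F+[\mathrm{pt}]\neq t_F$, where $[\mathrm{pt}]$ is the class of a point (here $h_E=H$ and $h_F=H-E_{12}$, so $h_Fh_E=[\mathrm{pt}]$ and $h_F^2=0$). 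The identity $\zeta_M(\tau_\varnothing)=t_\varnothing$, with \emph{no} higher-degree correction, is therefore not a formal consequence of the characterization in Theorem~\ref{thm:exceptionalmatroid}; it depends on the relations in $K(M)$ (for instance $\prod_F(1-\tau_F)=1$ and the vanishing of $\eta_G$ for rank-one $G$) in an essential way, and your proposal supplies no argument for it. The paper proves the first two bullets by reducing to the Boolean matroid via Proposition~\ref{prop:compatibility} and invoking the computation in \cite[Theorem 10.11]{BEST}; your algebraic route recovers the second bullet nicely, but you still need an independent argument for the first.
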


\begin{proof}
By Proposition \ref{prop:compatibility}, it is sufficient to prove these statements
when $M = U_E$ is the Boolean matroid.
The first two statements for the Boolean matroid are proved in the course of the proof of
\cite[Theorem 10.11]{BEST}.  

For the last statement, let $F$ be a nonempty flat.
After restricting from $X_{\Sigma^\aug_M}$ to $X_{\Sigma_M}$, we have that
$$\tau_F = -\sum_{F\subset G} \mu(F,G) \eta_G + \text{terms of degree at least 2 in $\{\eta_G \mid F \subset G\}$}.$$
Applying $\zeta_M$, we find that \begin{eqnarray*}\zeta_M(\tau_F) &=& - \sum_{F\subset G}\mu(F,G) h_G + \text{terms of degree at least 2 in $\{h_G \mid F \subset G\}$}\\
&=& t_F + \text{terms of degree at least 2},\end{eqnarray*}
which completes the proof.
\end{proof}

\section{Adams operations and Serre duality}\label{ssec:structure}
We now discuss some additional structures on matroid $K$-rings. 
The fact that $K(M)$ 
and $K^{\aug}(M)$ 
are (by definition) $K$-rings of toric varieties $X_{\Sigma_M}$ and $X_{\Sigma^{\aug}_M}$ endows them with the structure of {\bf augmented \boldmath{$\la$}-rings} \cite[Expos\'e V, Exemple 3.9.1]{SGA6}. This means that we have a rank function $\epsilon$ that takes values in $\Z$, and for each natural
number $k$, we have operations $\la^k$ and $\Psi^k$ (the latter called {\bf Adams operations})
characterized by the property that
$$\lambda^k([\mathcal{E}]) = [\wedge^k \mathcal{E}]\and \Psi^k([\mathcal{L}]) = [\otimes^k \mathcal{L}]$$
for any vector bundle $\mathcal{E}$ and any line bundle $\mathcal{L}$. 
Since our simplicial generators $\eta_F$ are all of the form $1-[\mathcal{L}]$ for some line bundle $\mathcal{L}$, we have
$\epsilon(\eta_F)=0$, and
$$\Psi^k(\eta_F) = \sum_{i=1}^k (-1)^{i+1}\binom{k}{i} \eta_F^i.$$
The formula for augmented simplicial generators is identical.
Note that the Adams operations are ring homomorphisms, which is not at all combinatorially obvious from the above formula.
The Adams operations become simultaneously diagonalizable after tensoring with $\mathbb{Q}$, and their eigenspaces (which are independent of $k>1$)
map isomorphically to the graded pieces of the Chow ring via 
the Chern character.
We also have a \textbf{duality automorphism} $D$, characterized by the property that
$D([\mathcal{E}]) = [\mathcal{E}^\vee]$ for any vector bundle $\mathcal{E}$.
In terms of the simplicial generators, this takes the form
$$D(\eta_F) = \frac{-\eta_F}{1-\eta_F} = -\eta_F - \eta_F^2 - \dotsb,$$ and similarly in the augmented setting.

We note that the operations $\epsilon$, $\lambda^k$, $\Psi^k$, and $D$ all commute with 
the maps $\iota^*$ and $(\iota^\aug)^*$ introduced in Section \ref{sec:matroidk} because they are compatible with functorial maps between $K$-rings of varieties. 

On a $d$-dimensional smooth projective variety $X$ with canonical bundle $\omega_X$, Serre duality implies that, for any vector bundle $\mathcal{E}$, 
$$\chi(X, \mathcal{E}) = (-1)^d \chi(X, \omega_X \otimes \mathcal{E}^{\vee}).$$
In particular, this holds for $X = \WA$ or $\Waug$.  We will show that a similar formula holds on $K(M)$ for any matroid $M$,
even if $M$ is not realizable.  Our first task is to define classes $\omega_M \in K(M)$ and $\omega_M^{\aug} \in K^{\aug}(M)$
that will play the roles of the canonical bundles.  

For any matroid on the ground set $E$, the {\bf matroid polytope} $P(M)\subset \R^E$ is defined to be the convex hull
of the indicator functions of bases of $M$, and the {\bf independence polytope} $\IP(M)\subset\R^E$ is defined
to be the convex hull of indicator functions of independent subsets of $M$.

Recall that, on a smooth projective toric variety with fan $\Sigma$, there is a correspondence between torus equivariant
nef line bundles and lattice polytopes whose normal fans coarsen $\Sigma$. For any matroid on the ground set $E$,
$P(M)$ coarsens $\Sigma_{U_E}$ and $\IP(M)$ coarsens $\Sigma_{U_E}^\aug$, so we obtain line bundles
$[P(M)]$ and $[\IP(M)]$ on $X_E$ and $X_E^{\aug}$, respectively.  

Let $M^\perp$ be the matroid dual to $M$,
characterized by the property that the bases of $M^\perp$ are the complements of the bases of $M$.
Using the standard description of the canonical bundle of a smooth proper toric variety \cite[Section~4.3]{Ful93}, one can check that $c_1(\omega_{X_E}) = x_{\emptyset} + x_E = -\sum_{F \not= \emptyset, E} x_F$ and $c_1(\omega_{X_E}^{\aug}) = x_E - \sum_{e \in E} y_e = -\sum_{F \not= E} x_F - \sum_{e \in E} y_e$. 
We define the classes
$$\omega_M \coloneqq \iota^*\left(\omega_{X_E} \cdot [P(M^\perp)]\right) 
\in K(M)$$
and 
$$\omega_M^\aug \coloneqq (\iota^\aug)^*\left(\omega_{X^{\aug}_E} \cdot [\IP(M^\perp)]\right) 
\in K^\aug(M).$$

These definitions are motivated by the following proposition.

\begin{proposition}\label{canonical}
Let $\cA$ be a hyperplane arrangement with associated matroid $M$.
The isomorphisms $K(M)\cong K(\WA)$ and 
$K^\aug(M)\cong K(\Waug)$ take $\omega_M$ and $\omega_M^\aug$
to the canonical bundles of $\WA$ and $\Waug$, respectively.
\end{proposition}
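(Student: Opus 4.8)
The plan is to prove the non-augmented statement first and then note that the augmented case is identical with the obvious substitutions. Since $\WA \subset X_E$ is a closed embedding inside the permutohedral variety (after choosing linear functionals), and since the isomorphism $K(M) \cong K(\WA)$ is compatible via $\iota^*$ with the structure maps $K(X_E) \to K(M)$ and $K(X_E) \to K(\WA)$ (cf. Lemma~\ref{lem:resformula} and the discussion preceding it), it suffices to show that
$$\iota^*\big(\omega_{X_E} \cdot [P(M^\perp)]\big) = \omega_{\WA}$$
under the restriction map $K(X_E) \to K(\WA)$, where on the right $\iota$ now denotes the composite $\WA \hookrightarrow X_{\Sigma_M} \hookrightarrow X_E$. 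By the adjunction formula, $\omega_{\WA} = \iota^*\omega_{X_E} \otimes \det(N_{\WA/X_E})$, so the content of the proposition is the identification of the normal bundle determinant: we must show that $\det(N_{\WA/X_E})$ equals the restriction to $\WA$ of the line bundle $[P(M^\perp)]$ on $X_E$.

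First I would recall the geometric description of $\WA$ inside $X_E$: the permutohedral variety $X_E$ is the wonderful variety of the Boolean arrangement $\mathcal{U}_E$, and $X_{\Sigma_M}$ is the union of torus-orbit closures corresponding to cones of $\Sigma_M \subset \Sigma_{U_E}$, with $\WA$ cut out inside by the hyperplane-arrangement equations. Concretely, the conormal geometry is controlled by the combinatorics: $\WA$ sits in the product $\prod_F \mathbb{P}(L^F)$ and is cut out, locally near each stratum, by equations indexed by the data distinguishing $M$ from the Boolean matroid. The key computation is to express $\det(N_{\WA/X_E})$ in terms of the divisor classes $D_F = \WA \cap D_{\rho_F}$ and the simplicial classes, i.e., to write it as a monomial in the $\mathcal{L}_F$ or equivalently in the $\mathcal{O}(-D_F)$. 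I would do this by comparing two canonical-bundle formulas: one for $\WA$ directly (which is well known — the canonical bundle of $\WA$ has an explicit expression in terms of the boundary divisors $D_F$ and the pullback of $\mathcal{O}_{\mathbb{P}(L)}(-\dim L)$, obtainable by iterated blow-up and the formula $\omega_{\Bl} = \pi^*\omega + (\operatorname{codim} - 1)E$), and one for $X_E$ (its canonical bundle is $-\sum$ of all torus-invariant prime divisors, the standard toric formula).

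The cleanest route, and the one I would pursue, is to avoid computing the normal bundle directly and instead exploit the Euler-characteristic characterization together with the results already in hand. Namely: both $\omega_{\WA}$ (the honest canonical bundle) and the class $\iota^*(\omega_{X_E}\cdot[P(M^\perp)])$ can be pinned down by how they pair under $\chi(\WA, -)$, since the Euler pairing on $K(\WA)$ is perfect (Theorem~\ref{exceptional} plus Poincar\'e duality on $A(\WA)$). Serre duality says $\omega_{\WA}$ is the unique class with $\chi(\WA, \mathcal{E}) = (-1)^{\dim \WA}\chi(\WA, \omega_{\WA} \otimes \mathcal{E}^\vee)$ for all $\mathcal{E}$; on the other side, I would use the projection-formula statement of Proposition~\ref{prop:compatibility} to transfer the computation of $\chi(\WA, \iota^*\xi)$ to $\chi(X_E, \xi \cdot [\mathcal{O}_{\WA}])$, and then invoke Serre duality on the smooth projective toric variety $X_E$, where $\omega_{X_E}$ is explicit and $[P(M^\perp)]$ has a known behavior — here I would cite the matroid-duality compatibility from \cite{BES} (and \cite{EurHuhLarson} in the augmented case), specifically the fact that the Bergman class $\Delta_M = [\WA] = \zeta_{U_E}([\mathcal{O}_M])$ transforms under $\mathcal{E} \mapsto \omega_{X_E}\otimes[P(M^\perp)]\otimes\mathcal{E}^\vee$ in the way dictated by $M \leftrightarrow M^\perp$. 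Matching the two uniqueness characterizations forces $\iota^*(\omega_{X_E}\cdot[P(M^\perp)]) = \omega_{\WA}$.

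The main obstacle I anticipate is the bookkeeping in the normal-bundle / canonical-bundle comparison: making precise the claim that $[P(M^\perp)]$ restricted to $\WA$ accounts exactly for $\det N_{\WA/X_E}$ requires knowing the explicit correspondence between the matroid polytope of $M^\perp$ and the divisor data on $X_E$ cutting out $\WA$, which in turn rests on the description of $\WA \subset X_E$ from \cite{BES}. If that correspondence is cleanly available as a cited black box, the proof is short; if not, the fallback is the Euler-pairing uniqueness argument of the previous paragraph, whose only real input is Proposition~\ref{prop:compatibility} and Serre duality on the toric variety $X_E$, both already established or standard. In the write-up I would present the Euler-pairing argument as the main proof and remark that it also identifies $\omega_M$ abstractly, deferring to \cite{BES, EurHuhLarson} for the polytope-theoretic reformulation that motivates the definition.
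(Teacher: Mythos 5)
Your first paragraph is exactly the paper's proof: apply adjunction to the closed embedding $\WA\subset X_E$ (resp.\ $\Waug\subset X_E^\aug$) and identify $\det N_{\WA/X_E}$ with the restriction of $[P(M^\perp)]$ (resp.\ $\det N_{\Waug/X_E^\aug}$ with the restriction of $[\IP(M^\perp)]$), citing this identification as a black box from \cite[Theorem 7.10]{BEST} in the non-augmented case and \cite[Proposition 4.6, Corollary 5.4]{EurHuhLarson} in the augmented case. That is a complete and correct argument, and it is what the paper writes down; the conditional in your last paragraph is resolved in your favor, since the correspondence is indeed cleanly citable.

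The Euler-pairing argument that you propose to feature as the \emph{main} proof, however, does not actually bypass the normal-bundle input, and you should not present it as an independent route. After applying Proposition~\ref{prop:compatibility} to write $\chi(\WA,\iota^*\xi)=\chi(X_E,\xi\cdot[\cO_{\WA}])$ and then invoking geometric Serre duality on $X_E$, you obtain
$\chi(X_E,\xi\cdot[\cO_{\WA}])=(-1)^{\dim X_E}\chi\big(X_E,\omega_{X_E}\cdot D(\xi)\cdot D([\cO_{\WA}])\big)$,
and to descend this back to a statement on $\WA$ you must evaluate $D([\cO_{\WA}])$. By the Koszul resolution of the structure sheaf of a regularly embedded smooth subvariety of codimension $c$, this dual is $(-1)^c$ times the class of $\cO_{\WA}\otimes\det N_{\WA/X_E}$, so the unknown determinant of the normal bundle reappears; the ``matroid-duality compatibility'' you gesture at is precisely the identity $\det N_{\WA/X_E}=[P(M^\perp)]|_{\WA}$ in disguise. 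In other words, the fallback requires the same external citation as the adjunction argument, only wrapped in extra machinery (and note that the paper's combinatorial Serre duality, Theorem~\ref{thm:SD}, is itself deduced \emph{from} this proposition in the realizable case, so one must be careful to use only honest geometric Serre duality here). Write up the adjunction argument from your first paragraph and drop the rest.
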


\begin{proof}
The adjunction formula states that the canonical class of $\Waug$
is equal to the restriction of the canonical class of $X_E^{\aug}$ tensored with the determinant of the 
normal bundle to $\Waug$ inside of $X_E^{\aug}$.
The determinant of the normal bundle is equal to the restriction of $[\IP(M^\perp)]$ 
by \cite[Proposition 4.6 and Corollary 5.4]{EurHuhLarson}. The non-augmented case can be prove similarly, using  \cite[Theorem 7.10]{BEST}.
\end{proof}

The following theorem is a combinatorial version of Serre duality for $K$-rings of matroids.

\begin{theorem}\label{thm:SD}
For any $\xi \in K(M)$ and $\xi^{\aug} \in K^{\aug}(M)$, we have
$$\chi(M, \xi) = (-1)^{\rk M-1} \chi(M, \omega_M \cdot D(\xi)) \and \chi^{\aug}(M, \xi^{\aug}) = (-1)^{\rk M} \chi^{\aug}(M, D(\xi^{\aug})).$$
\end{theorem}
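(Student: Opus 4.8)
The plan is to reduce the statement to classical Serre duality in the realizable case, and then propagate it to all loopless matroids by a valuativity argument. Suppose first that $M$ is the matroid of an arrangement $\cA$, so that $\WA$ and $\Waug$ are smooth and projective of dimensions $\rk M-1$ and $\rk M$, respectively. By Proposition~\ref{K is combinatorial}, Theorem~\ref{exceptional}, and Proposition~\ref{prop:commute} (together with Theorem~\ref{thm:exceptionalmatroid}), the isomorphism $K(M)\cong K(\WA)$ intertwines $\chi(M,-)$ with the Euler characteristic $\chi(\WA,-)$; it intertwines the two duality automorphisms $D$ since $D$ is functorial; and by Proposition~\ref{canonical} it carries $\omega_M$ to the canonical bundle $\omega_{\WA}$. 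Thus the first identity is exactly Serre duality $\chi(\WA,\xi)=(-1)^{\dim\WA}\chi(\WA,\omega_{\WA}\cdot D(\xi))$ transported across this isomorphism, and the augmented identity follows the same way from Serre duality on $\Waug$ via the augmented parts of Propositions~\ref{canonical} and~\ref{prop:commute}.

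For an arbitrary loopless matroid $M$ on $E$, the restriction $\iota^*\colon K(U_E)\to K(M)$ is surjective (it is pullback along an open immersion of smooth toric varieties, whose $K$-rings are generated by line bundles), so it suffices to prove the identity for classes $\xi=\iota^*\tilde\xi$ with $\tilde\xi\in K(U_E)$. Using that $D$ commutes with $\iota^*$, that $\iota^*$ is a ring homomorphism, that $\omega_M=\iota^*(\omega_{X_E}\cdot[P(M^\perp)])$, and applying the combinatorial projection formula of Proposition~\ref{prop:compatibility} twice, the desired identity becomes
\[
\chi(U_E,\,\tilde\xi\cdot[\cO_M])=(-1)^{\rk M-1}\,\chi(U_E,\,\omega_{X_E}\cdot[P(M^\perp)]\cdot D(\tilde\xi)\cdot[\cO_M])
\]
for every $\tilde\xi$. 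Applying the realizable case already proven to the Boolean matroid $U_E$ (i.e.\ Serre duality on the permutohedral variety $X_E$, of dimension $|E|-1$) to rewrite the left side, and using nondegeneracy of the Euler pairing on $K(U_E)$ \cite[Theorem~1.3]{anderson2015operational}, one checks that this is equivalent to the single identity
\[
D([\cO_M])=(-1)^{\rk M^\perp}\,[P(M^\perp)]\cdot[\cO_M]\qquad\text{in }K(U_E),
\]
where $\rk M^\perp=|E|-\rk M$ is the codimension of $\WA$ in $X_E$; the augmented statement reduces identically to $D([\cO_M^{\aug}])=(-1)^{\rk M^\perp}[\IP(M^\perp)]\cdot[\cO_M^{\aug}]$ in $K^{\aug}(U_E)$.

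It remains to prove this last identity for all $M$. For realizable $M$ it already follows from the realizable case of the theorem and the reduction just carried out; alternatively it is Grothendieck--Serre duality for the regular closed immersion $\WA\hookrightarrow X_E$ of codimension $\rk M^\perp$, combined with the identification of the determinant of the normal bundle with the restriction of $[P(M^\perp)]$ — the same geometric input used to prove Proposition~\ref{canonical}, namely \cite[Theorem~7.10]{BEST} in the non-augmented case and \cite[Proposition~4.6, Corollary~5.4]{EurHuhLarson} in the augmented case. To reach arbitrary $M$ I would argue by valuativity: the assignment $M\mapsto[\cO_M]$ is a valuative invariant (equivalently, via $\zeta_{U_E}^{-1}$, the Bergman class $M\mapsto\Delta_M$ is valuative), so the left side is valuative in $M$, and — using compatibility of matroid-polytope line bundles with matroid polytope subdivisions — so is the right side; since $(-1)^{\rk M^\perp}$ is constant on matroids of a fixed rank, both sides are valuative functions of $M$ that agree for realizable matroids, and the realizable matroids span the relevant module of valuative invariants, so the identity holds in general. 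Unwinding the reduction then yields the theorem.

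I expect the main obstacle to be this last valuativity step — concretely, showing that the "canonical-bundle side'' $M\mapsto[P(M^\perp)]\cdot[\cO_M]$ (which for realizable $M$ is the pushforward of $\omega_{\WA}$ to $X_E$, up to the fixed unit $[\omega_{X_E}^{-1}]$) is a valuative invariant, the subtlety being that $[P(M^\perp)]$ is not a linear function of the matroid polytope. This is precisely where the tautological-class and valuativity machinery of \cite{BEST,EurHuhLarson} is needed; by contrast, the realizable case and the passage between the two formulations are formal, using only Propositions~\ref{canonical} and~\ref{prop:compatibility} together with classical duality.
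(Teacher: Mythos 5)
Your proposal is correct and follows essentially the same route as the paper: establish the realizable case from Proposition~\ref{canonical} and classical Serre duality on $\WA$ and $\Waug$, then extend to all loopless matroids by valuativity together with the Derksen--Fink spanning result, with the one genuinely nontrivial input being the valuativity of $M\mapsto [P(M^\perp)]\cdot[\cO_M]$ (resp.\ $[\IP(M^\perp)]\cdot[\cO_M^{\aug}]$), which the paper likewise imports from \cite{BEST,EurHuhLarson}. Your intermediate reduction --- using Serre duality on $X_E$ and nondegeneracy of the Euler pairing to boil everything down to the single class-level identity $D([\cO_M])=(-1)^{\rk M^\perp}[P(M^\perp)]\cdot[\cO_M]$ in $K(U_E)$ --- is a clean repackaging of what the paper does in Lemma~\ref{everything is valuative}, where the valuativity of the scalar-valued functions $M\mapsto(-1)^{\rk M-1}\chi\big(M,\omega_M\cdot D(\iota^*\xi)\big)$ and their augmented analogues is checked directly instead.
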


In the realizable case, Theorem \ref{thm:SD} follows from Proposition \ref{canonical} and Serre duality for the varieties
$\WA$ and $\Waug$. Our strategy is to deduce the general case using the concept of valuativity. 
Given an abelian group $A$, a function  $f \colon \{\text{matroids on } E\} \to A$ is called \textbf{valuative}
if it factors through the map that takes $M$
to the indicator function of its matroid polytope $P(M)\subset\R^E$.
That is, for any matroids $M_1, \ldots, M_k$ and integers $a_1, \ldots, a_k$ such that $\sum a_i 1_{P(M_i)} = 0$, we
require that $\sum a_i f(M_i) = 0$.

\begin{remark}The notion of valuativity is typically defined on functions that take values
on the set of all matroids on $E$, whereas we are only considering loopless matroids in this paper.
This is not an important distinction, as the valuative group of loopless matroids on $E$ is a direct summand of the valuative group
of all matroids on $E$.  That is, a function on the set of loopless matroids on $E$ is valuative if and only if it extends to a valuative function on the set of all matroids on $E$, if and only if it extends by zero to a valuative function on the set of all matroids on $E$.
\end{remark}

\begin{lemma}\label{everything is valuative}
Fix a pair of classes $\xi\in K(U_E)$ and $\xi^\aug\in K^\aug(U_E)$.  The following four $\Z$-valued functions are valuative:
\begin{align*}
M &\mapsto \chi(M, \iota^*\xi)\\
M &\mapsto \chi^\aug(M, (\iota^\aug)^*\xi^\aug)\\
M &\mapsto (-1)^{\rk M - 1}\chi\big(M, \omega_M\cdot D(\iota^*\xi)\big)\\
M &\mapsto (-1)^{\rk M}\chi^\aug\big(M, \omega_M^\aug\cdot D((\iota^\aug)^*\xi^\aug)\big).
\end{align*}
\end{lemma}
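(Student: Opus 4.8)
The plan is to reduce everything to known valuativity statements about Chow-theoretic invariants of matroids, using the exceptional isomorphisms $\zeta_{U_E}$ and $\zeta_{U_E}^\aug$ and the compatibility results from Section~\ref{sec:matroidk}. The essential point is that, via Proposition~\ref{prop:compatibility}, each of the four functions can be rewritten as a degree of some Chow class on the permutohedral or stellahedral variety paired against the Bergman (or augmented Bergman) class $\Delta_M$ (respectively $\Delta_M^\aug$), and it is known that $M \mapsto \Delta_M$ and $M \mapsto \Delta_M^\aug$ are valuative; see \cite[Section~5]{BES} and \cite[Theorem~1.11]{EurHuhLarson}. Since a fixed $\Z$-linear functional applied to a valuative (polytope-algebra-valued, or Chow-group-valued) function is again valuative, this will give the result.

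Concretely, for the first function, Proposition~\ref{prop:compatibility} gives $\chi(M,\iota^*\xi) = \chi\big(U_E,\, \xi\cdot \zeta_{U_E}^{-1}(\Delta_M)\big) = \deg_{U_E}\!\big(\tfrac{\zeta_{U_E}(\xi)\cdot \Delta_M}{1-h_E}\big)$, which is a fixed $\Z$-linear functional (namely $c \mapsto \deg_{U_E}(\tfrac{\zeta_{U_E}(\xi)}{1-h_E}\cdot c)$) applied to $\Delta_M \in A(U_E)$; valuativity of $M\mapsto \Delta_M$ then does it, and the augmented case is identical using $\Delta_M^\aug$ and $\zeta_{U_E}^\aug$. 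For the third and fourth functions, the only extra ingredient is the class $\omega_M$. Here I would use the definition $\omega_M = \iota^*(\omega_{X_E}\cdot [P(M^\perp)])$ together with the fact that $M \mapsto P(M^\perp)$ is valuative (duality $M \mapsto M^\perp$ is an affine-linear involution on matroid polytopes, hence preserves valuativity) and that $[\,\cdot\,]$ sending a generalized-permutohedron-polytope to its class in $K(X_E)$ is a valuative (indeed additive) assignment. Then $(-1)^{\rk M-1}\chi(M,\omega_M\cdot D(\iota^*\xi))$ unwinds, via Proposition~\ref{prop:compatibility} and the definition of $\omega_M$, into a sum over $M$ of a fixed linear functional applied to $[P(M^\perp)]\cdot \Delta_M$ — but one must be careful: $\Delta_M$ and $[P(M^\perp)]$ both depend on $M$, so this is a \emph{product} of two matroid-dependent classes, not a linear functional of one. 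The resolution is that $\omega_M \cdot D(\iota^*\xi) = \iota^*\big(\omega_{X_E}\cdot[P(M^\perp)]\cdot D(\xi)\big)$ lies in $K(M)$, and applying $\chi(M,-)$ and then Proposition~\ref{prop:compatibility} turns this into $\deg_{U_E}\!\big(\tfrac{\zeta_{U_E}(\omega_{X_E}\cdot[P(M^\perp)]\cdot D(\xi))\cdot \Delta_M}{1-h_E}\big)$; the sign $(-1)^{\rk M - 1}$ only depends on $\rk M$, which is constant on the support of any fixed matroid polytope in $\R^E$ of a given dimension — so I should organize the argument so the sign is pulled outside before invoking valuativity, or equivalently observe $(-1)^{\rk M - 1}$ itself is a valuative function of $M$ and valuative functions are closed under multiplication by such.

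The cleanest route, which I would actually follow in writing this up, is: (i) record that the four functions are, via Proposition~\ref{prop:compatibility} and the definitions of $\omega_M$, $\omega_M^\aug$, expressible purely in terms of $\Delta_M$, $\Delta_M^\aug$, $[P(M^\perp)]$, $[\IP(M^\perp)]$, and $\rk M$; (ii) cite that each of $M\mapsto \Delta_M$, $M\mapsto \Delta_M^\aug$ is valuative (\cite[Section~5]{BES}, \cite[Theorem~1.11]{EurHuhLarson}), that $M\mapsto [P(M^\perp)]$ and $M\mapsto[\IP(M^\perp)]$ are valuative (the polytope assignments $M\mapsto P(M)$, $M\mapsto \IP(M)$ are valuative essentially by definition of the valuative group, and $M\mapsto M^\perp$ preserves valuativity), and that $M\mapsto \rk M$ (hence $M \mapsto (-1)^{\rk M}$) is valuative; (iii) invoke closure of valuative functions under $\Z$-linear functionals and under pointwise products with scalar-valued valuative functions. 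The main obstacle is exactly the bookkeeping in step (i): making sure that in the third and fourth functions, after substituting the definition of $\omega_M$, everything that depends on $M$ is collected into a single valuative input (a product $\Delta_M \cdot [P(M^\perp)]$ is \emph{not} obviously valuative as a function of $M$, since products of valuative functions need not be valuative), and that instead one writes the whole thing as $\big(\text{fixed functional}\big)$ applied to $\Delta_M$ after absorbing $[P(M^\perp)]$ — which fails — or, better, recognizes that $\chi(M,\omega_M \cdot D(\iota^*\xi))$ is \emph{by Serre duality in the realizable case} equal to $\pm\chi(M,\iota^*\xi)$, and then uses that valuativity is a closed condition: the function $M \mapsto (-1)^{\rk M - 1}\chi(M,\omega_M\cdot D(\iota^*\xi)) - \chi(M,\iota^*\xi)$ vanishes on all realizable matroids, hence (since realizable matroids span the valuative group, as used elsewhere in the paper) on all matroids once we know it is valuative. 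So the argument bifurcates: first prove valuativity of each of the four functions individually by the functional-of-$\Delta_M$ argument — which genuinely works for all four, once one notes that $\omega_M \cdot D(\iota^*\xi) = \iota^*(\text{fixed class depending on } M \text{ only through } [P(M^\perp)])$ and that the composite $M \mapsto [P(M^\perp)] \mapsto \big(\text{its image in } K(M)\big) \mapsto \chi(M,-\cdot D(\iota^*\xi))$ is a composition of a valuative map with $\Z$-linear maps — being careful that the intermediate step "push $[P(M^\perp)] \in K(U_E)$ into $K(M)$ and pair against $D(\iota^*\xi)$ via $\chi(M,-)$" is, by Proposition~\ref{prop:compatibility}, literally the $\Z$-linear functional $c \mapsto \deg_{U_E}\!\big(\tfrac{\zeta_{U_E}(c \cdot D(\xi))\cdot \Delta_M}{1-h_E}\big)$ — no wait, $\Delta_M$ reappears. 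I would therefore in the final writeup settle on the "closed condition" phrasing for the third and fourth functions: show directly that functions (3) and (4) differ from a scalar multiple of functions (1) and (2) on realizable matroids by the realizable Serre duality, deduce their valuativity from that of (1), (2), and (since valuativity is inherited by any function agreeing with a valuative one on a spanning set is false in general) — hence the honest path is the functional argument, and the real content of the lemma is the bookkeeping that each of the four maps factors as $M \mapsto (\text{a valuative polytope/Bergman datum}) \mapsto (\text{a fixed } \Z\text{-linear functional})$, with the mild subtlety of the sign $(-1)^{\rk M}$ handled separately.
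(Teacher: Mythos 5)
Your treatment of the first two functions is correct and matches the paper: by Proposition~\ref{prop:compatibility}, $\chi(M,\iota^*\xi)=\chi(U_E,\xi\cdot[\cO_M])$ with $[\cO_M]=\zeta_{U_E}^{-1}(\Delta_M)$, so the function is a fixed $\Z$-linear functional applied to the valuative assignment $M\mapsto\Delta_M$; the augmented case is the same with $\Delta_M^\aug$.

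For the third and fourth functions, however, your proposal has a genuine gap, and you in fact identify it yourself without closing it. Unwinding the definitions, the fourth function is (up to the harmless sign, which is constant on any polytope subdivision since all matroids in a valuative relation share the same rank)
$\chi\bigl(U_E,\ \omega^{\aug}_{X_E}\cdot D(\xi^\aug)\cdot[\IP(M^\perp)]\cdot[\cO_M^\aug]\bigr)$,
so the $M$-dependence enters through the \emph{product} $[\IP(M^\perp)]\cdot[\cO_M^\aug]$ of two $M$-dependent classes. As you correctly observe, valuativity of each factor separately does not imply valuativity of the product, and neither of your two escape routes works: the ``Serre duality on realizable matroids plus spanning'' route is circular (agreement with a valuative function on realizable matroids does not transfer valuativity to an undetermined function on all matroids — indeed establishing exactly this transfer is the point of the lemma), and the ``fixed functional of $[P(M^\perp)]$'' route fails because, as you note, ``$\Delta_M$ reappears.'' Your writeup ends without resolving this. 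The paper's proof supplies the missing ingredient as an external input: it cites \cite[Proposition 4.7 and Corollary 6.5]{EurHuhLarson} for the statement that the specific assignment $M\mapsto[\cO_M^\aug]\cdot[\IP(M^\perp)]$ (equivalently, its image under $\zeta_{U_E}^\aug$) is itself valuative, and argues similarly in the non-augmented case using the corresponding results for $\Delta_M$ and $[P(M^\perp)]$. Without that (or an equivalent argument that this particular product is valuative), functions (3) and (4) remain unproved.
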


\begin{proof}
We begin with the first function.
By Proposition~\ref{prop:compatibility}, $\chi(M, \iota^*\xi) = \chi(X_E, [\cO_M] \cdot \xi)$. Recall that $[\mathcal{O}_M] = \zeta_{U_{E}}^{-1}(\Delta_M)$, and the function $M \mapsto \Delta_M$ is valuative by \cite[Corollary 7.11]{BEST}. Therefore the function $M \mapsto [\mathcal{O}_M]$ is valuative, and the result follows by the linearity of the Euler characteristic
map on $K(X_E)$. 
The proof of valuativity of the second function is similar, except that we now use \cite[Proposition 4.7]{EurHuhLarson} 
to establish the valuativity of the function $M\mapsto [\mathcal{O}_M^{\aug}]$. 

After applying $\zeta_{U_E}^\aug$, it follows from \cite[Proposition 4.7 and Corollary 6.5]{EurHuhLarson} that the function
$M\mapsto [\cO_M^\aug]\cdot [\IP(M^\perp)]$ is valuative. This implies the valuativity of the fourth function. One can argue similarly for the third function. 
\end{proof}

\begin{proof}[Proof of Theorem~\ref{thm:SD}]
We have already used Proposition \ref{canonical} to prove the theorem for realizable matroids.
We have shown that each side of both claimed equalities is valuative.
The full theorem now follows from a result of Derksen and Fink \cite{DF10}
(see also \cite[Lemma 5.9]{BEST}), which says that 
the matroid polytope of any matroid can be expressed as a linear combination of indicator functions of matroid polytopes of realizable matroids.
\end{proof}

\begin{question}
Can Theorem~\ref{thm:SD} be applied to give interesting identities for matroids? For example, let $f(\ell) \coloneqq \chi(M, \omega_M^{\ell})$; then Theorem~\ref{thm:SD} implies that $f(\ell) = (-1)^{r-1} f(1 - \ell)$. Does this statement admit an elementary proof,
without using valuativity and Serre duality for wonderful varieties?  
\end{question}

\section{Euler characteristics and simplicial Snapper polynomials} \label{sec:applications}
In this section, we give purely combinatorial formulas for the Euler characteristic of a monomial in the simplicial generators
on $K(M)$ or $K^\aug(M)$.  
Given a pair of tuples $$\textbf{m} = (m_F \mid F \text{ a nonempty flat}) \and \textbf{m}' = (m'_F \mid F \text{ a nonempty flat}),$$
we say that $\textbf{m}'\leq \textbf{m}$ if $m'_F\leq m_F$ for all $F$.
We say that $\textbf{m}$ satisfies the {\bf Hall--Rado condition} if, for all $\textbf{m}'\leq \textbf{m}$, 
we have $$\operatorname{rank}\bigcup_{m'_F>0} F \;\geq\; \sum_{F \not= \emptyset} m'_F.$$
If the inequality is always strict, we say that $\textbf{m}$ satisfies the {\bf dragon Hall--Rado condition}.
The following theorem computes the degrees of the monomials $h^{\textbf{m}}$ and $(h^\aug)^{\textbf{m}}$.

\begin{theorem}\label{degreeChow}
If $\sum m_F = \rk M - 1$, then  \cite[Theorem 5.2.4]{BES}
$$\deg_M(h^{\textbf{m}}) = \begin{cases} 1 & \text{if $\textbf{m}$ satisfies the dragon Hall--Rado condition}\\ 0 & \text{otherwise.}\end{cases}$$ 
If $\sum m_F = \rk M$, then  \cite[Theorem 1.3]{EHLPolymatroid}
$$\deg^\aug_M((h^\aug)^{\textbf{m}}) = \begin{cases} 1 & \text{if $\textbf{m}$ satisfies the Hall--Rado condition}\\ 0 & \text{otherwise.}\end{cases}$$
\end{theorem}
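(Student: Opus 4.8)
The plan is to establish both formulas first for realizable matroids, using the geometry of Section~\ref{sec:simplicial}, and then to propagate them to all loopless matroids by valuativity, as in the proof of Theorem~\ref{thm:SD}. So suppose first that $M$ is realized by an arrangement $\cA$, and (since degrees are unchanged by extension of scalars) that $\mathbb{F}$ is infinite, so that $\deg_M(h^{\textbf{m}}) = \deg_{\WA}(h^{\textbf{m}})$ and $\deg^\aug_M((h^\aug)^{\textbf{m}}) = \deg_{\Waug}((h^\aug)^{\textbf{m}})$. When $\sum m_F = \rk M - 1 = \dim\WA$, Lemma~\ref{lem:subspace} together with Remark~\ref{bumping the last coordinate} shows that $h^{\textbf{m}} \in A^{\dim\WA}(\WA)\cong\Z$ is either $0$ or the class of a point, and similarly $(h^\aug)^{\textbf{m}} \in A^{\dim\Waug}(\Waug)$ when $\sum m_F = \rk M$. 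Hence each degree lies in $\{0,1\}$ and equals $1$ exactly when the monomial is nonzero, so in the realizable case the whole content is to recognize nonvanishing combinatorially.

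For this I would trace through the inductive construction in the proof of Lemma~\ref{lem:subspace}. Choosing an ordering $F_1,\dots,F_k$ of the flats occurring in $\textbf{m}$ with multiplicity, one produces a descending chain $L = L_0 \supset L_1 \supset \cdots \supset L_k$ in which $L_j$ is a generic hyperplane of $L_{j-1}$ containing $L_{j-1}\cap L_{F_j}$; the monomial is nonzero iff for some ordering each of these cuts is proper, which happens exactly when the rank of the flat induced by $F_j$ in the restricted arrangement $\cA_{L_{j-1}}$ is at least $2$ in the non-augmented case (at least $1$ in the augmented case, the difference being that $\mathbb{P}(L^F\oplus\mathbb{F})$ has dimension $\rk F$ rather than $\rk F - 1$). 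Unwinding this into linear algebra, the existence of a valid ordering is precisely the existence of a suitable independent transversal of the set system $\{F_1,\dots,F_k\}$ in $M$, so by Rado's theorem on independent transversals it is governed by inequalities of the form $\rk\big(\bigcup_{i\in I} F_i\big) \geq |I|$ over all sub-multisets, with the shift by $1$ in projective dimension turning these into strict inequalities in the non-augmented case. This yields the dragon Hall--Rado condition for $\deg_M(h^{\textbf{m}})$ and the Hall--Rado condition for $\deg^\aug_M((h^\aug)^{\textbf{m}})$. I expect this translation --- making the genericity and transversality statements precise and matching them to Rado's criterion --- to be the main obstacle; the rest is bookkeeping.

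Finally, for an arbitrary loopless $M$ I would argue by valuativity. By Lemma~\ref{lem:resformula}, the monomial $h^{\textbf{m}}$ on $X_{\Sigma_M}$ is the restriction $\iota^*$ of the corresponding monomial $\xi$ in the classes $h_S$ on the permutohedral variety $X_E$, so the Bergman-class projection formula gives $\deg_M(h^{\textbf{m}}) = \deg_{U_E}(\xi\cdot\Delta_M)$; since $M\mapsto\Delta_M$ is valuative \cite[Corollary 7.11]{BEST}, so is $M\mapsto\deg_M(h^{\textbf{m}})$, and the augmented case is valuative by the same argument using the augmented Bergman class and \cite[Proposition 4.7]{EurHuhLarson}. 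The indicator functions of the (dragon) Hall--Rado conditions are themselves valuative (a direct combinatorial check), so both sides of each asserted equality are valuative functions of $M$ that agree on all realizable matroids, hence agree for every $M$ by the theorem of Derksen and Fink \cite{DF10} that the matroid polytopes of realizable matroids span the valuative group. This recovers \cite[Theorem 5.2.4]{BES} and the result of \cite{EHLPolymatroid}.
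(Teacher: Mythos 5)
The paper does not prove this statement: Theorem~\ref{degreeChow} is imported wholesale from \cite[Theorem 5.2.4]{BES} (non-augmented case) and \cite{EHLPolymatroid} (augmented case), so there is no internal proof to compare yours against. Judged on its own terms, your architecture is sensible and your first step is correct: when $\sum m_F$ equals the dimension of the relevant variety, Lemma~\ref{lem:subspace} and Remark~\ref{bumping the last coordinate} do show that $h^{\textbf{m}}$ (resp.\ $(h^\aug)^{\textbf{m}}$) is either zero or the class of a single point, so the degree is $0$ or $1$ and everything reduces to a combinatorial nonvanishing criterion.

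However, the proposal has a genuine gap exactly where you flag "the main obstacle." The equivalence between "the iterated generic-hyperplane construction of Lemma~\ref{lem:subspace} never degenerates" and the (dragon) Hall--Rado rank inequalities \emph{is} the theorem; invoking "Rado's theorem on independent transversals" does not discharge it. Concretely, at step $j$ you must compute $\dim(L_{F_j}\cap L_{j-1})$ where $L_{j-1}$ is an intersection of generic hyperplanes each constrained to contain an earlier subspace $L_{F_i}\cap L_{i-1}$; these constraints interact, and showing that the resulting dimension count is governed precisely by $\rk\big(\bigcup_{i\in I}F_i\big)$ over sub-multisets $I$ (with the correct strict/non-strict dichotomy in the two cases) is a nontrivial genericity argument that you have not supplied. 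A second, smaller gap is the claim that the indicator function of the (dragon) Hall--Rado condition is valuative "by a direct combinatorial check." That indicator is a Boolean conjunction of rank inequalities over \emph{non-nested} unions of subsets, and valuativity is a linear condition not preserved by such Boolean operations; in the literature this valuativity is usually deduced as a \emph{consequence} of the degree formula (since $M\mapsto\Delta_M$ is valuative), so using it as an input here is circular unless you prove it independently, e.g.\ via the Derksen--Fink classification of valuative invariants. Your valuativity argument for the left-hand sides, via $\deg_M(h^{\textbf{m}})=\deg_{U_E}(s^{\textbf{m}}\cdot\Delta_M)$, is fine, as is the observation (cf.\ Remark~\ref{that's why}) that one must reindex by arbitrary subsets to make the valuativity statement meaningful.
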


The $K$-theoretic analogue of Theorem \ref{degreeChow} is almost exactly the same, except without
the condition on $\sum m_F$.

\begin{theorem}
\label{thm:euler-simplicial-matroid}
For any $m$, we have
$$\chi(M, \eta^{\textbf{m}}) = \begin{cases} 1 & \text{if $\textbf{m}$ satisfies the dragon Hall--Rado condition}\\ 0 & \text{otherwise}\end{cases}$$ and 
$$\chi^\aug(M, (\eta^\aug)^{\textbf{m}}) = \begin{cases} 1 & \text{if $\textbf{m}$ satisfies the Hall--Rado condition}\\ 0 & \text{otherwise.}\end{cases}$$
\end{theorem}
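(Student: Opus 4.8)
The plan is to reduce Theorem~\ref{thm:euler-simplicial-matroid} to the Chow-theoretic computation in Theorem~\ref{degreeChow} by exploiting the Hirzebruch--Riemann--Roch-type formula and a stability phenomenon: multiplying a simplicial generator monomial by $\eta_E$ (respectively $\eta_E^\aug$) leaves its Euler characteristic unchanged, while bringing the total degree up to the critical value $\rk M - 1$ (respectively $\rk M$) at which Theorem~\ref{degreeChow} applies. First I would record the key algebraic identity: since $\zeta_M(\eta_F) = h_F$ and $\chi(M,\xi) = \deg_M\!\big(\zeta_M(\xi)/(1-h_E)\big)$, we have
\[
\chi(M, \eta^{\textbf{m}}) \;=\; \deg_M\!\left(\frac{h^{\textbf{m}}}{1-h_E}\right) \;=\; \sum_{k \ge 0} \deg_M\!\left(h^{\textbf{m}} h_E^{\,k}\right),
\]
and exactly one term in this sum lies in the top graded piece $A^{\rk M - 1}(M)$, namely $k = \rk M - 1 - \sum_F m_F$ (and the sum is genuinely a finite sum of classes in various degrees, only the top-degree one contributing to $\deg_M$). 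The same works in the augmented case with $k = \rk M - \sum_F m_F$ and $\deg^\aug_M$.

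Next I would feed this into Theorem~\ref{degreeChow}. Writing $\textbf{m}^+$ for the tuple obtained from $\textbf{m}$ by increasing the $E$-coordinate by $k$, we get $\deg_M(h^{\textbf{m}}h_E^k) = \deg_M(h^{\textbf{m}^+})$, which is $1$ if $\textbf{m}^+$ satisfies the dragon Hall--Rado condition and $0$ otherwise. So the proof reduces to a purely combinatorial claim: $\textbf{m}^+$ satisfies the dragon Hall--Rado condition if and only if $\textbf{m}$ does. The point is that $E$ is the maximal flat, so padding with copies of $E$ interacts trivially with the rank inequalities. Concretely, for any $\textbf{m}' \le \textbf{m}^+$, if the $E$-coordinate of $\textbf{m}'$ is positive then $\bigcup_{m'_F>0} F = E$ has rank $\rk M$, and the required strict inequality $\rk M > \sum_F m'_F$ holds automatically since $\sum_F m'_F \le \sum_F m^+_F = \rk M - 1$; if the $E$-coordinate of $\textbf{m}'$ is zero, then $\textbf{m}'$ is really a sub-tuple of $\textbf{m}$ (note $m_E = m^+_E - k$ could be positive, but we may also need to check sub-tuples of $\textbf{m}$ using $E$; this is handled the same way). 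Hence the condition for $\textbf{m}^+$ collapses to the condition for $\textbf{m}$. The analogous bookkeeping works in the augmented case with ``Hall--Rado'' in place of ``dragon Hall--Rado'' and $\le \rk M$ in place of $< \rk M$; I would also need $k \ge 0$, i.e.\ $\sum_F m_F \le \rk M - 1$, and observe that if $\sum_F m_F \ge \rk M$ then both $\eta^{\textbf{m}} = 0$ and the dragon Hall--Rado condition fails (taking $\textbf{m}' = \textbf{m}$), so both sides vanish; similarly in the augmented case the cutoff is $\sum_F m_F \le \rk M$.

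The main obstacle, I expect, is not any single step but making the combinatorial equivalence ``$\textbf{m}^+$ satisfies (dragon) Hall--Rado $\iff$ $\textbf{m}$ satisfies it'' airtight, since one must quantify carefully over all sub-tuples $\textbf{m}' \le \textbf{m}^+$ and track when the $E$-coordinate is or is not used — and in the augmented (non-strict) case the boundary count $\sum m'_F = \rk M$ is exactly achievable, so one must verify the inequality direction matches. A secondary subtlety is justifying that $\chi(M,\eta^{\textbf{m}})$ really equals the single top-degree term: this is immediate from $\deg_M$ vanishing on non-top-degree classes, but it relies on $K(M)$ being torsion-free and on Theorem~\ref{thm:exceptionalmatroid} giving $\zeta_M$ with the stated values, both already available. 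One could alternatively bypass the matroid machinery entirely in the realizable case via Proposition~\ref{prop:HRRtype} and Remark~\ref{bumping the last coordinate}, then invoke valuativity (Lemma~\ref{everything is valuative} shows $M \mapsto \chi(M,\iota^*\xi)$ is valuative) together with the Derksen--Fink decomposition to extend to all matroids; but the direct reduction to Theorem~\ref{degreeChow} above is cleaner and avoids valuativity.
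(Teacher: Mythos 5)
Your proposal is correct and follows essentially the same route as the paper: the paper likewise notes that both sides vanish when $\sum m_F \geq \rk M$, and otherwise sets $k = \rk M - 1 - \sum m_F$, replaces $\textbf{m}$ by the tuple $\tilde{\textbf{m}}$ with $\tilde m_E = m_E + k$, computes $\chi(M,\eta^{\textbf{m}}) = \deg_M\bigl(h^{\textbf{m}}/(1-h_E)\bigr) = \deg_M(h^{\tilde{\textbf{m}}})$, and observes that $\textbf{m}$ satisfies dragon Hall--Rado if and only if $\tilde{\textbf{m}}$ does, so Theorem~\ref{degreeChow} finishes the argument. Your combinatorial verification of that last equivalence is the right one, and the augmented case is handled identically.
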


\begin{proof}
We prove the first statement; the augmented case is similar.  
If $\sum m_F \geq \rk M$, then $\eta^{\textbf{m}} = 0$ and $\textbf{m}$ fails to satisfy the dragon Hall--Rado condition.  If $\sum m_F < \rk M$,
we let $$k = \rk M - 1 - \sum m_F \geq 0,$$ and
define $\tilde{\textbf{m}}$ by putting $\tilde m_E = m_E + k$
and $\tilde m_F = m_F$ for all $F\neq E$.  Then $$\chi(M, \eta^{\textbf{m}}) 
=  \deg_M\left(\frac{\zeta_M(\eta^{\textbf{m}})}{1-h_E}\right) 
= \deg_M\left(\frac{h^{\textbf{m}}}{1-h_E}\right) 
= \deg_M\left(h^{\tilde{\textbf{m}}}\right).$$
We observe that $\textbf{m}$ satisfies dragon Hall--Rado if and only if $\tilde{\textbf{m}}$ does, thus our formula follows from Theorem \ref{degreeChow}.
\end{proof}

Given a proper variety $X$ and a finite tuple of line bundles $\cL = (\cL_1,\ldots,\cL_k)$ on $X$,
the function from $\Z^k$ to $\Z$ given by the formula
$$\Snap_\cL(\textbf{a}) \coloneqq \chi\left(X, \cL_1^{a_1}\otimes\cdots\otimes\cL_k^{a_k}\right)$$
is a polynomial of degree at most $\dim X$ \cite[Theorem 9.1]{MR0109156},
which we call the {\bf Snapper polynomial}.
Given an integer $x$ and a natural number $d$, let $$x^{(d)} = \frac{x(x+1)\cdots(x+d-1)}{d!} = \binom{d+x-1}{d} = (-1)^d\binom{-x}{d}.$$ 
If $\textbf{a}\in\Z^k$ and $\textbf{d}\in\N^k$, let $$\textbf{a}^{(\textbf{d})} \coloneqq a_1^{(d_1)}\cdots a_k^{(d_k)}.$$

\begin{lemma}\label{oh snap}
If $\sigma_i = 1 - [\cL_i^{-1}]$ for each $i\in\{1,\ldots,k\}$,
then $$\Snap_\cL(\textbf{a}) = \sum_{\textbf{d}\in \N^k}\chi\left(X, \sigma_1^{d_1} \cdots \sigma_k^{d_{k}}\right)\, \textbf{a}^{(\textbf{d})}.$$
\end{lemma}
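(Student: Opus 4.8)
The statement relates the Snapper polynomial, expressed in terms of line bundle powers $\cL_i^{a_i}$, to the Euler characteristics of monomials in the classes $\sigma_i = 1 - [\cL_i^{-1}]$. First I would observe that $[\cL_i^{-1}] = 1 - \sigma_i$, hence $[\cL_i] = (1-\sigma_i)^{-1}$, which makes sense since $\sigma_i$ lies in the coniveau filtration and is therefore nilpotent (as in the proof of Lemma~\ref{generation}), so the geometric series $(1-\sigma_i)^{-1} = \sum_{d\geq 0}\sigma_i^d$ terminates. Raising to the power $a_i$ (allowing $a_i$ negative, in which case we interpret $(1-\sigma_i)^{-a_i}$ directly as a polynomial in $\sigma_i$), the binomial-type expansion gives
\[
[\cL_i^{a_i}] = (1-\sigma_i)^{-a_i} = \sum_{d_i \geq 0} \binom{-a_i}{d_i}(-\sigma_i)^{d_i} = \sum_{d_i\geq 0} a_i^{(d_i)}\,\sigma_i^{d_i},
\]
using the identity $a^{(d)} = (-1)^d\binom{-a}{d}$ recorded just before the lemma. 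This identity holds first as an identity of formal power series in a variable, and then specializes to $K(X)$ because $\sigma_i$ is nilpotent; one should note it is valid for all integers $a_i$, positive, negative, or zero, since $x^{(d)}$ is a polynomial in $x$.

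Next I would multiply these expansions together over $i = 1,\ldots,k$ and collect terms. Since the tensor product in $K(X)$ is commutative and the sums are finite, we get
\[
\cL_1^{a_1}\otimes\cdots\otimes\cL_k^{a_k} = \prod_{i=1}^k\Big(\sum_{d_i\geq 0} a_i^{(d_i)}\sigma_i^{d_i}\Big) = \sum_{\textbf{d}\in\N^k} \Big(\prod_{i=1}^k a_i^{(d_i)}\Big)\,\sigma_1^{d_1}\cdots\sigma_k^{d_k} = \sum_{\textbf{d}\in\N^k} \textbf{a}^{(\textbf{d})}\,\sigma_1^{d_1}\cdots\sigma_k^{d_k}
\]
as classes in $K(X)$, where again the sum is finite because each $\sigma_i$ is nilpotent. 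Applying the Euler characteristic map $\chi(X, -)\colon K(X)\to\Z$, which is $\Z$-linear, yields exactly the claimed formula
\[
\Snap_\cL(\textbf{a}) = \chi\big(X, \cL_1^{a_1}\otimes\cdots\otimes\cL_k^{a_k}\big) = \sum_{\textbf{d}\in\N^k}\chi\big(X,\sigma_1^{d_1}\cdots\sigma_k^{d_k}\big)\,\textbf{a}^{(\textbf{d})}.
\]

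**Main obstacle.** There is no serious obstacle here; this is essentially a bookkeeping computation. The one point that deserves a sentence of care is the justification that all the infinite sums are actually finite: this follows because $\sigma_i = 1 - [\cL_i^{-1}] = [\cO(-D)] - [\cO] \cdot(-1)$... more precisely $\sigma_i$ lies in the first level of the coniveau (topological) filtration on $K(X)$, which is nilpotent on a variety of finite dimension, exactly as invoked in the proof of Lemma~\ref{generation}. A second minor point is checking that $a_i^{(d_i)}$ is the correct coefficient for negative $a_i$ as well: this is immediate from the definition $x^{(d)} = (-1)^d\binom{-x}{d}$ together with the formal power series identity $(1-t)^{m} = \sum_d \binom{m}{d}(-t)^d$ applied with $m = -a_i$, which is valid for any integer $m$ when read as a statement about the polynomial ring truncated by nilpotence. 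Once these two remarks are in place, the proof is just the multiplication of geometric series carried out above.
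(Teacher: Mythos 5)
Your proof is correct and follows essentially the same route as the paper: expand $[\cL_i^{a_i}] = (1-\sigma_i)^{-a_i} = \sum_{d_i} a_i^{(d_i)}\sigma_i^{d_i}$ using nilpotence of $\sigma_i$, multiply over $i$, and apply the linearity of $\chi$. The paper's proof is just a terser version of the same computation.
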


\begin{proof}
We have 
$$[\cL_i^{a_i}] = \frac{1}{(1-\sigma_i)^{a_i}} = \sum_{d_i=0}^\infty a_i^{(d_i)} \sigma_i^{d_i}.$$
The lemma follows.
\end{proof}

Let $M$ be a matroid, and consider a tuple $\textbf{a} = (a_F \mid \text{$F$ a nonempty flat})$.
Let $$\Snap_M(\textbf{a}) \coloneqq \sum_m\chi(M, \eta^{\textbf{m}}) \textbf{a}^{(\textbf{m})} \and \Snap_M^\aug(\textbf{a}) \coloneqq \sum_m\chi^\aug(M, (\eta^\aug)^{\textbf{m}}) \textbf{a}^{(\textbf{m})}.$$
This definition is motivated by the following observation.

\begin{lemma}\label{snap is geometric}
Let $\cA$ be a hyperplane arrangement with associated matroid $M$.
Then $\Snap_M$ and $\Snap_M^\aug$
coincide with the geometrically defined Snapper polynomials for the varieties $\WA$ and $\Waug$
with respect to the line bundles $(\cL_F)$ and $(\cL_F^\aug)$.
\end{lemma}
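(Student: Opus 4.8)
The plan is to unwind both sides of the claimed identity and match them term by term using the Hirzebruch--Riemann--Roch-type formula of Theorem~\ref{exceptional}. Recall that the geometrically defined Snapper polynomial for $\WA$ with respect to the tuple $(\cL_F)$ is, by definition, the function $\textbf{a} \mapsto \chi\big(\WA, \bigotimes_F \cL_F^{a_F}\big)$; similarly for $\Waug$ with $(\cL_F^\aug)$. So I must show $$\Snap_M(\textbf{a}) = \chi\Big(\WA, \bigotimes_F \cL_F^{a_F}\Big) \and \Snap_M^\aug(\textbf{a}) = \chi\Big(\Waug, \bigotimes_F (\cL_F^\aug)^{a_F}\Big).$$

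First I would invoke Lemma~\ref{oh snap} with $X = \WA$, with the line bundles being the $\cL_F$ indexed by nonempty flats $F$, and with $\sigma_F = 1 - [\cL_F^{-1}] = \eta_F$ (using the definition of $\eta_F$ from Section~\ref{sec:intro-simplicial}, identified across $K(M) \cong K(\WA)$ via Proposition~\ref{K is combinatorial}). This gives directly
$$\chi\Big(\WA, \bigotimes_F \cL_F^{a_F}\Big) = \sum_{\textbf{m} \in \N^{\{\text{nonempty flats}\}}} \chi\big(\WA, \eta^{\textbf{m}}\big)\, \textbf{a}^{(\textbf{m})}.$$
Since $\chi(\WA, \eta^{\textbf{m}}) = \chi(M, \eta^{\textbf{m}})$ under the identification of $K$-rings and their Euler characteristic functionals (this identification and its compatibility with $\chi$ was set up in Section~\ref{sec:intro-matroids}), the right-hand side is exactly $\Snap_M(\textbf{a})$ by definition. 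The augmented case is identical, replacing $\WA$ by $\Waug$, $\cL_F$ by $\cL_F^\aug$, $\eta_F$ by $\eta_F^\aug$, and $\chi$ by $\chi^\aug$.

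There is essentially no obstacle here: the lemma is a more or less immediate bookkeeping consequence of Lemma~\ref{oh snap} together with the already-established identifications $K(M) \cong K(\WA)$, $K^\aug(M) \cong K(\Waug)$ and the compatibility of the Euler characteristic functionals with these identifications. The only point requiring a word of care is to confirm that Lemma~\ref{oh snap} applies verbatim with the index set being the set of nonempty flats (it is stated for a finite tuple, and the set of nonempty flats of $M$ is finite), and that the substitution $\sigma_F = \eta_F$ is consistent with the definition $\eta_F = 1 - [\cL_F^{-1}]$. I expect this to be a short proof, perhaps two or three sentences, simply citing Lemma~\ref{oh snap} and the definitions.
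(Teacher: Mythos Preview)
Your proposal is correct and matches the paper's own proof, which simply cites the definition $\eta_F = 1 - [\cL_F^{-1}]$ and applies Lemma~\ref{oh snap}. Your extra remark identifying $\chi(\WA,\eta^{\textbf{m}})$ with $\chi(M,\eta^{\textbf{m}})$ is accurate and harmless, though the paper leaves it implicit; the mention of Theorem~\ref{exceptional} in your plan is unnecessary, as the argument only needs Lemma~\ref{oh snap} and the definitions.
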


\begin{proof}
By definition, $\eta_F = 1 - [\cL_F^{-1}]$.
The fact that $\Snap_M$ coincides with the Snapper polynomial for $\WA$
with respect to $(\cL_F)$ then follows from Lemma \ref{oh snap}.  The augmented case is identical.
\end{proof}

We can now rephrase Theorem \ref{thm:euler-simplicial-matroid} as a statement about Snapper polynomials.

\begin{corollary}
\label{cor:snapper-simplicial-matroid}
We have
$$\Snap_M(\textbf{a})\;\; = \sum_{\substack{\text{$\textbf{m}$ satisfies}\\ \text{dragon Hall--Rado}}} \textbf{a}^{(\textbf{m})}
\and
\Snap_M^\aug(\textbf{a})\;\; = \sum_{\substack{\text{$\textbf{m}$ satisfies}\\ \text{Hall--Rado}}} \textbf{a}^{(\textbf{m})}.$$
\end{corollary}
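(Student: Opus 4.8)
The plan is to combine Theorem~\ref{thm:euler-simplicial-matroid} with the definition of $\Snap_M$ and $\Snap_M^\aug$ given just above the corollary. By definition,
\[
\Snap_M(\textbf{a}) = \sum_{\textbf{m}}\chi(M,\eta^{\textbf{m}})\,\textbf{a}^{(\textbf{m})},
\]
and Theorem~\ref{thm:euler-simplicial-matroid} tells us that $\chi(M,\eta^{\textbf{m}})$ equals $1$ when $\textbf{m}$ satisfies the dragon Hall--Rado condition and $0$ otherwise. Substituting this directly collapses the sum to $\sum_{\textbf{m}\ \mathrm{dragon\ HR}}\textbf{a}^{(\textbf{m})}$, which is the first claimed identity. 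The augmented identity follows in exactly the same way, using the second half of Theorem~\ref{thm:euler-simplicial-matroid}, which says $\chi^\aug(M,(\eta^\aug)^{\textbf{m}})$ equals $1$ precisely when $\textbf{m}$ satisfies the (ordinary) Hall--Rado condition.

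The only point requiring a word of care is that the sums defining $\Snap_M$ and $\Snap_M^\aug$ are a priori infinite sums over all tuples $\textbf{m}$, so one should note that only finitely many terms are nonzero: $\eta^{\textbf{m}}=0$ as soon as $\sum_F m_F \ge \rk M$ (since each $\eta_F$ lies in the first piece of the coniveau filtration, hence is nilpotent, and the number of flats is finite), and likewise in the augmented setting $(\eta^\aug)^{\textbf{m}}=0$ once $\sum_F m_F > \rk M$. This matches the fact, already recorded, that a tuple failing the dragon Hall--Rado (resp.\ Hall--Rado) condition contributes nothing, and in particular every $\textbf{m}$ with $\sum_F m_F$ too large fails the relevant condition. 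So both sides are finite sums and the term-by-term comparison is legitimate.

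There is essentially no obstacle here: the corollary is a pure rephrasing, and all the substance is in Theorem~\ref{thm:euler-simplicial-matroid}, whose proof reduces the $K$-theoretic computation to the Chow-theoretic degree formula of Theorem~\ref{degreeChow} via the Hirzebruch--Riemann--Roch-type identity of Theorem~\ref{exceptional} (more precisely its matroid version, through $\zeta_M$). Accordingly I would keep the proof to one or two lines: invoke Theorem~\ref{thm:euler-simplicial-matroid}, note the finiteness of the sums, and conclude.

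\begin{proof}
Both equalities are immediate from Theorem~\ref{thm:euler-simplicial-matroid} and the definitions of $\Snap_M$ and $\Snap_M^\aug$. Indeed, $\eta_F$ is nilpotent for every flat $F$ and there are finitely many flats, so $\eta^{\textbf{m}} = 0$ for all but finitely many $\textbf{m}$; thus $\Snap_M(\textbf{a}) = \sum_{\textbf{m}} \chi(M, \eta^{\textbf{m}})\, \textbf{a}^{(\textbf{m})}$ is a finite sum, and by Theorem~\ref{thm:euler-simplicial-matroid} the coefficient $\chi(M, \eta^{\textbf{m}})$ is $1$ if $\textbf{m}$ satisfies the dragon Hall--Rado condition and $0$ otherwise. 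The same argument, using $(\eta^\aug)^{\textbf{m}}$ and the Hall--Rado condition, gives the formula for $\Snap_M^\aug$.
\end{proof}
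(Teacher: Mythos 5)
Your proof is correct and matches the paper exactly: the corollary is stated there as an immediate rephrasing of Theorem~\ref{thm:euler-simplicial-matroid} via the definitions of $\Snap_M$ and $\Snap_M^\aug$, with no further argument given. Your added remark on finiteness of the sums is a harmless (and accurate) bit of extra care.
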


\begin{remark}
In the special case where $M = U_E$ is Boolean, the first equality in \cref{cor:snapper-simplicial-matroid} appears in \cite[Theorem 11.3]{Postnikov}.
\end{remark}

\section{Snapper polynomials in the Feichtner--Yuzvinsky generators}\label{sec:FY Snap}
Section \ref{sec:applications} was about the Snapper polynomial of the wonderful variety of an arrangement 
with respect to the line bundles $\{\cL_F\mid\text{$F$ a nonempty flat}\}$, or of an arbitrary matroid with respect
to the corresponding $K$-classes.  This was fundamentally a ``simplicial'' construction, since the simplicial generators
for $K$-theory were defined in terms of these line bundles.  In this section, we consider the Snapper polynomial
with respect to a collection of line bundles related to the Feichtner--Yuzvinsky generators.  For simplicity, we work
only in the non-augmented setting.

Recall that, for all flats $F$, we have defined $$t_F \coloneqq \PhFY(x_F)\in A(M) \and \tau_F \coloneqq \PsFY(x_F)\in K(M).$$
For any tuple of natural numbers $\textbf{m} = (m_F\mid \text{$F$ a flat})$, let $$\mathbf{\tau}^{\textbf{m}} \coloneqq \prod_F \tau_F^{m_F}.$$
For any tuple of integers $\textbf{a} = (a_F \mid \text{$F$ a flat})$, we define
$$\Snap_M^{\FY}(\textbf{a}) \coloneqq \sum_{\textbf{m}} \chi(M, \mathbf{\tau}^{\textbf{m}}) \textbf{a}^{(\textbf{m})}.$$

If $M$ is the matroid associated with a hyperplane arrangement $\cA$,
let $D_{\emptyset}$ and  $D_E$ be the divisor classes on $\WA$ with first Chern classes $t_{\emptyset}$ and $t_E$ respectively, so that we have $\tau_F = 1 - [\mathcal{O}(-D_F)]$ for all flats $F$.
Lemma~\ref{oh snap} immediately implies that $\Snap_M^{\FY}$ is the Snapper polynomial of $\WA$ with respect to the tuple of line bundles $\left(\mathcal{O}(D_F)\mid \text{$F$ a flat}\right)$.

We now provide an explicit formula for the polynomial $\Snap_M^{\FY}(\textbf{a})$.
For any natural number $k$, let $\Flag_k$ be the collection
of flags of flats of the form
\[\calF = \{\varnothing = F_0 \subsetneq F_1 \subsetneq \cdots \subsetneq F_k = E\}.\]
If $F$ and $G$ are incomparable flats, then $\tau_F \tau_G = 0\in K(M)$, therefore $\Snap_M^{\FY}(\textbf{a})$ does not contain any monomials that are multiples of $a_F a_G$.
This implies that we may write
$$\Snap_M^{\FY}(\textbf{a}) = \sum_{k\geq 1}\sum_{\cF\in\Flag_k}\sum_{\textbf{m}} c(\cF,\textbf{m})\, a_{F_0}^{(m_0)}a_{F_1}^{(m_1+1)}a_{F_2}^{(m_2+1)}\cdots a_{F_{k-1}}^{(m_{k-1}+1)}a_{F_k}^{(m_k)},$$
where $\textbf{m}$ ranges over all tuples $(m_0,\ldots,m_k)$ of natural numbers.
It remains only to compute the constants $c(\cF,\textbf{m})$.

\begin{remark}
It may be slightly confusing that the first and last exponents are $(m_0)$ and $(m_k)$, whereas the middle exponents
are $(m_i+1)$ for $1 \le i < k$.  The point is that a particular ``monomial'' appears in the summand indexed by $\cF$ if and only if
the nonempty proper flats in its support are precisely $\{F_1,\ldots,F_{k-1}\}$.
Also, this convention leads to a tidier formula for $c(\cF, \textbf{m})$.
\end{remark}

Given $\cF \in \Flag_k$ and $i\in\{1,\ldots,k\}$, consider the matroid
$M^{F_{i}}_{F_{i-1}}$ obtained by contracting $F_{i-1}$ and deleting the complement of $F_{i}$, 
whose poset of flats is canonically isomorphic
to the interval $[F_{i-1},F_{i}]$ in the poset of flats of $M$.  Let $d_i = \rk M^{F_{i}}_{F_{i-1}} - 1$.
Let $\mu^j(M)$ denote the absolute value of the coefficient of $t^{\rk M - 1 - j}$ in the reduced characteristic polynomial of $M$.

\begin{theorem} 
\label{thm:snapper-FY-wonderful}
For any $k\geq 1$, $\cF \in \Flag_k$, and $\textbf{m} = (m_0,\ldots,m_k)$, we have
$$c(\cF,\textbf{m}) = \sum_{\substack{\textbf{e},\textbf{f}\in\mathbb{N}^k\\ e_1 = m_0}}(-1)^{|\textbf{e}|+|\textbf{f}|}\prod_{i=1}^k \binom{d_i-e_i}{f_i}\mu^{e_i}(M^{F_{i}}_{F_{i-1}}) \binom{m_i}{e_{i+1}+f_i-m_i,m_i-f_i,m_i-e_{i+1}},$$
where we adopt the convention that $e_{k+1}= 0$.
\end{theorem}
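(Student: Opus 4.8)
The plan is to reduce the computation of $c(\cF,\textbf{m})$ to a product of ``local'' Euler characteristics along the flag, and then evaluate each local factor using the exceptional isomorphism $\zeta_M$ together with Theorem~\ref{degreeChow}. First I would recall that $\tau_F\tau_G = 0$ whenever $F,G$ are incomparable (Theorem~\ref{Kmatroid}(1)), so only monomials $\tau^{\textbf{m}}$ supported on a flag contribute; this is already used to write $\Snap_M^{\FY}$ in the flag-indexed form above. For a fixed flag $\cF = \{\varnothing = F_0\subsetneq F_1 \subsetneq \cdots \subsetneq F_k = E\}$, the key structural input is a product formula: the subring of $K(M)$ generated by $\{\tau_{F_0},\dots,\tau_{F_k}\}$, together with the Euler characteristic functional restricted to monomials supported on $\cF$, should factor as a tensor product of the corresponding data for the minors $M^{F_i}_{F_{i-1}}$. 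Concretely, I expect $\chi(M,\tau_{F_0}^{n_0}\tau_{F_1}^{n_1}\cdots\tau_{F_k}^{n_k})$ to equal a product over $i\in\{1,\dots,k\}$ of a quantity depending only on $M^{F_i}_{F_{i-1}}$ and on $n_{i-1}, n_i$ (with the boundary indices $n_0$ and $n_k$ playing a slightly different, ``non-proper-flat'' role). This is the analogue, in the Feichtner--Yuzvinsky generators, of the way $\Snap_M$ in the simplicial generators decomposes; the geometric shadow is that the relevant intersection numbers on $\WA$ localize along the boundary stratum indexed by $\cF$, which is itself a product of wonderful varieties of the minors.

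Granting such a factorization, I would then compute each local factor. Using the exceptional isomorphism $\zeta_M$ and the HRR-type formula (Theorem~\ref{thm:exceptionalmatroid} and the definition of $\chi$), a local Euler characteristic translates into $\deg$ of a rational expression in the simplicial classes $h_G$ for $G$ in the interval $[F_{i-1},F_i]$; by Lemma~\ref{exceptional FY}, $\zeta_M(\tau_{F_i}) = t_{F_i}$ modulo higher-order terms, and one has the explicit leading behaviour $\zeta_M(\tau_\varnothing) = t_\varnothing$, $\zeta_M(\tau_E) = t_E/(1+t_E)$. Expanding $(1-\tau)^{-1}$ as a geometric series and collecting terms, the coefficient $c(\cF,\textbf{m})$ becomes an alternating sum over auxiliary exponent vectors $\textbf{e},\textbf{f}\in\mathbb{N}^k$, where $e_i$ records the ``degree shift'' coming from rewriting $\tau_{F_i}$ in terms of the $t$'s on the $i$th minor, and $f_i$ records the contribution of the $1/(1+t_E)$-type Todd correction. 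The binomial $\binom{d_i - e_i}{f_i}$ arises from expanding $(1-h_E)^{-1}$ truncated to the top degree $d_i = \rk M^{F_i}_{F_{i-1}} - 1$, the factor $\mu^{e_i}(M^{F_i}_{F_{i-1}})$ is the degree of $t_{F_i}^{e_i}$-type monomials on the minor (i.e., a reduced-characteristic-polynomial coefficient, via the known formula for degrees of monomials in the $t$'s on a wonderful variety — this is precisely where Theorem~\ref{degreeChow} and the classical degree computations enter), and the trinomial coefficient $\binom{m_i}{e_{i+1}+f_i-m_i,\; m_i-f_i,\; m_i-e_{i+1}}$ comes from multinomially redistributing the $m_i$ copies of $\tau_{F_i}$ among the contributions to the $i$th minor (via $f_i$), the $(i+1)$st minor (via $e_{i+1}$), and the remainder.

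The main obstacle will be establishing the factorization along the flag rigorously with the correct normalizations, rather than up to a scalar. In the realizable case one can argue geometrically: the closed stratum of $\WA$ cut out by the divisors $D_{F_1},\dots,D_{F_{k-1}}$ is a fiber bundle whose structure is a product of wonderful varieties $W_{\mathcal{A}^{F_i}_{F_{i-1}}}$, and the line bundles $\mathcal{O}(D_{F_i})$ restrict compatibly; Euler characteristics of the relevant sheaves then multiply by a Künneth/pushforward argument, keeping careful track of the normal-bundle twists (which is where the $d_i$ and the Todd-type corrections $1/(1+t_E)$ on each factor come from). To pass to arbitrary matroids, I would invoke valuativity: both sides of the asserted identity, as functions of $M$, are valuative (the left side by Lemma~\ref{everything is valuative} applied to the appropriate class $\xi\in K(U_E)$ obtained by pulling back the $\tau$-monomial, using Lemma~\ref{lem:resformula} and the fact that $\tau_S$ restricts to $\tau_{\bar S}$; the right side manifestly, since $\mu^j$ of a minor and the combinatorial data are valuative), so it suffices to check it for realizable matroids, where the geometric argument applies. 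I would carry out the steps in this order: (1) reduce to flags via $\tau_F\tau_G = 0$; (2) prove the flag factorization geometrically for realizable $M$; (3) evaluate each local factor by expanding $\zeta_M(\tau)$ and applying the degree formula, obtaining the stated alternating sum; (4) check valuativity of both sides and conclude in general. Step (2), with all normalizations pinned down, is the crux.
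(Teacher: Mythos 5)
Your overall architecture coincides with the paper's: reduce to flag-supported monomials via $\tau_F\tau_G=0$, factor along the flag using the product structure of the boundary stratum and K\"unneth, evaluate the resulting ``local'' two-variable pieces via Lemma~\ref{exceptional FY} and the formula $\deg_M(t_\varnothing^e t_E^\ell)=(-1)^{\rk M-1}\mu^e(M)$ (note this is \cite[Proposition 9.5]{AHK}, not Theorem~\ref{degreeChow}, which concerns the simplicial generators), and extend to non-realizable matroids by valuativity. However, your stated key structural input --- that $\chi(M,\tau_{F_0}^{n_0}\cdots\tau_{F_k}^{n_k})$ equals a \emph{product} over $i$ of quantities depending on $M^{F_i}_{F_{i-1}}$ and $n_{i-1},n_i$ --- is false, and this is a genuine gap rather than a normalization issue. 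The reason is that the restriction of $\mathcal{O}(-D_{F_i})$ to the stratum $D_{F_1}\cap\cdots\cap D_{F_{k-1}}\cong\prod_i W_{\!\cA^{F_i}_{F_{i-1}}}$ is an external tensor product of line bundles on the $i$th and $(i{+}1)$st factors, so $\tau_{F_i}=1-[\mathcal{O}(-D_{F_i})]$ restricted to the stratum does \emph{not} split as a sum or product of classes pulled back from single factors; powers of it produce cross terms, and the individual Euler characteristics are convolutions, not products. (Your own final answer confirms this: it is a sum over $\textbf{e},\textbf{f}$, not a single product.) The correct statement, which is what the paper proves, is a factorization of the \emph{Snapper polynomial}: K\"unneth applies cleanly to a single line bundle $\mathcal{L}'\boxtimes\mathcal{L}''$, so one works with $\chi(\WA,\mathcal{O}(\sum a_F D_F))$ as a function of $\textbf{a}$, uses the exact sequence $0\to\mathcal{O}(D-D_G)\to\mathcal{O}(D)\to\mathcal{O}_{D_G}(D)\to 0$ to show that the difference operator $\partial_{F_1}\cdots\partial_{F_{k-1}}$ applied to $\Snap_M^{\FY}$ equals $\prod_i\Snap^{\FY}_{M^{F_i}_{F_{i-1}}}$, and only then extracts coefficients. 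The trinomial coefficient you correctly anticipate arises at this last stage from the identity $x^{(m)}x^{(n)}=\sum_\ell\binom{m+n-\ell}{\ell,m-\ell,n-\ell}x^{(m+n-\ell)}$, which merges the two occurrences of the variable $a_{F_i}$ coming from adjacent local factors; without passing to the generating function there is no clean place for this identity to act.

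Two smaller points on your step (4). First, the valuativity of the product $\Snap^{\FY}_{M_\varnothing^G}\cdot\Snap^{\FY}_{M_G^E}$ over complementary minors is not ``manifest'': it is a nontrivial convolution theorem for valuative invariants (\cite[Theorem 4.6]{McMullen2009}, \cite[Theorem A]{AS22}). Second, to even state valuativity of an identity indexed by a fixed subset $G$ or flag $\cF$, one must extend $\Snap_M^{\FY}$ to variables indexed by arbitrary subsets and allow minors with loops (declaring the invariant zero there), since under a matroid polytope decomposition a given $G$ need not be a flat of every piece; this is the content of Remark~\ref{that's why} and must be built into the argument from the start.
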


\begin{corollary} 
\label{cor:degrees}
Fix an element $\cF \in \Flag_k$ and a tuple of natural numbers $\textbf{m} = (m_0,\ldots,m_k)$ with $\sum m_i = \rk M - k$.  Then the degree
$$\deg_M\left(t_{F_0}^{m_0}t_{F_1}^{m_1+1}\cdots t_{F_{k-1}}^{m_{k-1}+1}t_{F_k}^{m_k}\right)$$
is equal to 
$$(-1)^{\rk M - k}\sum_{\substack{\textbf{e}\in\mathbb{N}^k\\ e_1 = m_0}}\prod_{i=1}^k \mu^{e_i}(M^{F_{i}}_{F_{i-1}}) \binom{m_i}{e_{i+1}-e_i+d_i-m_i,m_i-d_i+e_i,m_i-e_{i+1}},$$
again with the convention that $e_{k+1}=0$.
\end{corollary}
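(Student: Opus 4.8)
The plan is to deduce Corollary~\ref{cor:degrees} from Theorem~\ref{thm:snapper-FY-wonderful} by reading off the coefficient of a top-degree monomial and simplifying under the dimension hypothesis $\sum_i m_i = \rk M - k$. Write $t^{\textbf{m}^{\FY}} := t_{F_0}^{m_0}t_{F_1}^{m_1+1}\cdots t_{F_{k-1}}^{m_{k-1}+1}t_{F_k}^{m_k}$ and $\tau^{\textbf{m}^{\FY}}$ for the analogous product in $K(M)$, where $\textbf{m}^{\FY}$ is the flat-indexed exponent tuple with $m^{\FY}_{F_0}=m_0$, $m^{\FY}_{F_i}=m_i+1$ for $1\le i\le k-1$, $m^{\FY}_{F_k}=m_k$, and $m^{\FY}_G=0$ otherwise; the hypothesis says exactly that $\sum_G m^{\FY}_G=\rk M-1$, so $t^{\textbf{m}^{\FY}}$ lies in the top graded piece of $A(M)$. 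First I would show $\deg_M(t^{\textbf{m}^{\FY}})=\chi(M,\tau^{\textbf{m}^{\FY}})$: by Lemma~\ref{exceptional FY}, $\zeta_M(\tau_G)=t_G+(\text{degree}\ge 2)$, so $\zeta_M(\tau^{\textbf{m}^{\FY}})=t^{\textbf{m}^{\FY}}+(\text{degree}\ge\rk M)=t^{\textbf{m}^{\FY}}$ in $A(M)$ since $A^{\ge\rk M}(M)=0$; then $\chi(M,\tau^{\textbf{m}^{\FY}})=\deg_M\big(\zeta_M(\tau^{\textbf{m}^{\FY}})/(1-h_E)\big)=\deg_M(t^{\textbf{m}^{\FY}})$, the division by $1-h_E=1+h_E+\cdots$ contributing nothing because it only raises degree. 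On the other hand $\chi(M,\tau^{\textbf{m}^{\FY}})$ is by definition the coefficient of $\textbf{a}^{(\textbf{m}^{\FY})}$ in $\Snap^{\FY}_M(\textbf{a})$, which after the reorganization preceding Theorem~\ref{thm:snapper-FY-wonderful} is precisely $c(\cF,\textbf{m})$: the support of any contributing exponent tuple is a chain, since $\tau_F\tau_G=0$ for incomparable $F,G$, and completing that chain with $\varnothing$ and $E$ recovers a unique flag. Hence $\deg_M(t^{\textbf{m}^{\FY}})=c(\cF,\textbf{m})$.

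The second step is to substitute the formula of Theorem~\ref{thm:snapper-FY-wonderful} for $c(\cF,\textbf{m})$ and show the sum over $\textbf{f}$ collapses. Here I would use that $\sum_{i=1}^k d_i=\sum_{i=1}^k(\rk F_i-\rk F_{i-1})-k=\rk M-k$, which by hypothesis equals $\sum_i m_i$. Given a tuple $(\textbf{e},\textbf{f})$ contributing a nonzero term (with $e_1=m_0$, $e_{k+1}=0$): nonvanishing of $\mu^{e_i}(M^{F_i}_{F_{i-1}})$ forces $0\le e_i\le d_i$; nonvanishing of $\binom{d_i-e_i}{f_i}$ then forces $0\le f_i\le d_i-e_i$, hence $e_i+f_i\le d_i$; and nonvanishing of the multinomial coefficient forces $e_{i+1}+f_i\ge m_i$. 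Summing the last inequality over $i$ and using $e_1=m_0$, $e_{k+1}=0$ gives $\sum_i(e_i+f_i)\ge\sum_i m_i+m_0=\sum_i d_i+m_0$, while $\sum_i(e_i+f_i)\le\sum_i d_i$; this squeezes $\sum_i(e_i+f_i)$ to $\sum_i d_i$, forcing $m_0=0$ and $e_i+f_i=d_i$ for every $i$. For such tuples $\binom{d_i-e_i}{f_i}=1$, the sign $(-1)^{|\textbf{e}|+|\textbf{f}|}$ becomes $(-1)^{\sum_i d_i}=(-1)^{\rk M-k}$, and substituting $f_i=d_i-e_i$ into the remaining multinomial coefficient turns it into $\binom{m_i}{e_{i+1}-e_i+d_i-m_i,\,m_i-d_i+e_i,\,m_i-e_{i+1}}$; the resulting sum over all $\textbf{e}\in\mathbb{N}^k$ with $e_1=m_0$ (the non-admissible terms vanishing) is exactly the right-hand side of the corollary. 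When $m_0>0$ the same squeezing shows there are no nonzero terms, and the right-hand side of the corollary is likewise empty, so the identity holds trivially.

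I expect the squeezing argument in the second step to be the main obstacle: one must get the three nonvanishing conditions ($\mu^{e_i}\neq 0$, $\binom{d_i-e_i}{f_i}\neq 0$, and the multinomial coefficient $\neq 0$) to pin $\textbf{f}$ down completely, and in particular to observe that it is nonvanishing of $\mu^{e_i}(M^{F_i}_{F_{i-1}})$ that guarantees $d_i-e_i\ge 0$, so that $\binom{d_i-e_i}{f_i}$ genuinely vanishes for $f_i>d_i-e_i$. The remaining ingredients — the degree truncation in the first step, the reindexing identifying $c(\cF,\textbf{m})$ with the coefficient of $\textbf{a}^{(\textbf{m}^{\FY})}$ in $\Snap^{\FY}_M$, and the bookkeeping with the conventions $e_{k+1}=0$ and the middle exponent shifts by one — should all be routine.
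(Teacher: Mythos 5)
Your overall strategy is exactly the paper's: identify the degree with $\chi\big(M,\tau_{\varnothing}^{m_0}\tau_{F_1}^{m_1+1}\cdots\tau_{F_{k-1}}^{m_{k-1}+1}\tau_{F_k}^{m_k}\big) = c(\cF,\textbf{m})$ via Lemma \ref{exceptional FY} and top-degree truncation, then collapse the sum over $\textbf{f}$ in Theorem \ref{thm:snapper-FY-wonderful} by squeezing. Your first step and your final substitution $f_i = d_i - e_i$ are both correct. But there is an arithmetic error in the squeezing step that leads to a false conclusion.

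Summing $m_i \le e_{i+1}+f_i$ over $i=1,\dots,k$ and using $e_{k+1}=0$ gives $\sum_{i=1}^k m_i \le \big(|\textbf{e}|-e_1\big)+|\textbf{f}|$, and then $e_1=m_0$ yields $|\textbf{e}|+|\textbf{f}| \ge \sum_{i=0}^{k} m_i = \rk M - k = \sum_i d_i$. You instead wrote $\sum_i(e_i+f_i)\ge \sum_i m_i + m_0 = \sum_i d_i + m_0$, double-counting $m_0$: your identity $\sum_i d_i = \rk M - k$ already equals the \emph{full} sum $m_0+\cdots+m_k$, so there is no extra $+m_0$ on the right. That spurious $m_0$ is what forces your conclusion that $m_0=0$, and that conclusion is false; so is the claim that for $m_0>0$ both sides of the corollary vanish. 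For example, with $k=1$ and $\cF=\{\varnothing\subsetneq E\}$ the corollary reduces to $\deg_M\big(t_{\varnothing}^{m_0}t_E^{m_1}\big)=(-1)^{\rk M-1}\mu^{m_0}(M)$, which is nonzero for every $0\le m_0\le \rk M-1$ (this is precisely the identity invoked in the proof of Lemma \ref{ups}). With the corrected bound, the two inequalities squeeze to $|\textbf{e}|+|\textbf{f}|=\sum_i d_i$ with no constraint on $m_0$, all the termwise inequalities become equalities, $f_i=d_i-e_i$ for all $i$, and the rest of your computation (the sign $(-1)^{\sum_i d_i}=(-1)^{\rk M-k}$ and the rewritten multinomial) goes through verbatim and uniformly in $m_0$, exactly as in the paper. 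So the error is a fixable slip rather than a structural one, but as written the proof asserts something false and must be repaired before it establishes the corollary.
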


\begin{proof}
By the definition of $\chi$ and the third item of Lemma \ref{exceptional FY}, we have
$$\deg_M\left(t_{F_0}^{m_0}t_{F_1}^{m_1+1}\cdots t_{F_{k-1}}^{m_{k-1}+1}t_{F_k}^{m_k}\right) = \chi\left(M,\tau_{\varnothing}^{m_0}\tau_{F_1}^{m_1+1}\cdots \tau_{F_{k-1}}^{m_{k-1}+1}\tau_{F_k}^{m_k}\right)
= c(\cF,\textbf{m}).$$ 
In the formula for $c(\cF,\textbf{m})$ in Theorem \ref{thm:snapper-FY-wonderful}, we see that a particular term vanishes unless $e_i + f_i\leq d_i$
for all $i\in\{1,\ldots,k\}$.  Taking the sum over all $i$, this implies that
$$|e| + |f| \leq |d| = \rk M - k = m_0+\cdots+m_k.$$
On the other hand, a term also vanishes unless $m_i\leq e_{i+1} + f_i$ for all $i\in\{1,\ldots,k\}$.  Taking the sum over all $i$ and including the equality $e_0 = m_0$,
this implies that $$m_0+\cdots+m_k \leq |e| + |f|.$$
Thus the aforementioned inequalities are all equalities, and we can simplify our formula for $c(\cF,\textbf{m})$ by setting $f_i = d_i - e_i$ for all $i$.
The result follows.
\end{proof}

If we take $m_0 = 0 = m_k$ in Corollary \ref{cor:degrees}, we recover a result of Eur \cite[Theorem 3.2]{Eur20}.

\begin{corollary}
\label{cor:volum-FY}
Fix an element $\cF \in \Flag_k$ and a tuple of natural numbers $(m_1,\ldots,m_{k-1})$ with $(m_1+1) + \cdots + (m_{k-1}+1) = \rk M - 1$.
For all $i\in\{1,\ldots,k\}$, let $e_i = (m_1-d_1)+\cdots+(m_{i-1}-d_{i-1})$, and let $e_{k+1}=0$.
Then we have $$\deg_M\left(t_{F_1}^{m_1+1}\cdots t_{F_{k-1}}^{m_{k-1}+1}\right) = (-1)^{\rk M - k}\prod_{i=1}^k \mu^{e_i}(M^{F_{i}}_{F_{i-1}}) \binom{m_i}{e_{i+1}}.$$
\end{corollary}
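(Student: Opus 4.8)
The plan is to obtain this as a direct specialization of Corollary~\ref{cor:degrees} with $m_0 = m_k = 0$, and then to show that the sum over $\textbf{e}$ appearing there collapses to a single surviving term.

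First I would check the hypotheses and record the relevant rank bookkeeping. Since $\rk M^{F_i}_{F_{i-1}} = \rk_M(F_i) - \rk_M(F_{i-1})$, the ranks telescope along $\cF$ from $0$ to $\rk M$, which gives $\sum_{i=1}^k d_i = \rk M - k$. The hypothesis $\sum_{i=1}^{k-1}(m_i+1) = \rk M - 1$ rearranges to $\sum_{i=1}^{k-1} m_i = \rk M - k$, so with the convention $m_0 = m_k = 0$ we both satisfy the hypothesis $\sum_{i=0}^k m_i = \rk M - k$ of Corollary~\ref{cor:degrees} and obtain $\sum_{i=1}^k(m_i - d_i) = 0$. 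Corollary~\ref{cor:degrees} then expresses $\deg_M\bigl(t_{F_1}^{m_1+1}\cdots t_{F_{k-1}}^{m_{k-1}+1}\bigr)$ as $(-1)^{\rk M - k}$ times $\sum_{\textbf{e}} \prod_{i=1}^k \mu^{e_i}(M^{F_i}_{F_{i-1}})\binom{m_i}{e_{i+1}-e_i+d_i-m_i,\, m_i-d_i+e_i,\, m_i-e_{i+1}}$, the sum being over $\textbf{e}\in\N^k$ with $e_1 = 0$ and $e_{k+1} = 0$.

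Next I would pin down $\textbf{e}$. A multinomial coefficient in the sum is nonzero only when all three of its lower arguments are nonnegative; the first argument gives $e_{i+1}-e_i \geq m_i - d_i$ for every $i$. Summing over $i$, the left side telescopes to $e_{k+1}-e_1 = 0$ while the right side equals $\sum_i(m_i-d_i) = 0$, so every one of these inequalities must in fact be an equality: $e_{i+1} = e_i + m_i - d_i$, and hence $e_i = \sum_{j=1}^{i-1}(m_j - d_j)$, which is exactly the tuple in the statement. Thus only this single term can contribute. For this $\textbf{e}$ the first lower argument of each multinomial coefficient is $0$, and since $m_i - d_i + e_i = e_{i+1}$ the coefficient collapses to the ordinary binomial $\binom{m_i}{e_{i+1}}$; substituting back recovers the asserted formula.

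I expect no genuine obstacle; the argument is essentially bookkeeping. The two points deserving care are the telescoping step — which relies on the identity $\sum_i d_i = \rk M - k$ so that the two running totals match — and the degenerate cases: if the forced value $e_i$ falls outside $\{0,\dots,d_i\}$ then $\mu^{e_i}(M^{F_i}_{F_{i-1}}) = 0$ by degree considerations on the reduced characteristic polynomial, and if $e_{i+1}$ falls outside $\{0,\dots,m_i\}$ then $\binom{m_i}{e_{i+1}} = 0$; in either case both sides of the claimed identity vanish, consistently with the sum in Corollary~\ref{cor:degrees} being empty.
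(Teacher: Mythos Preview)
Your proposal is correct and follows precisely the same route as the paper: specialize Corollary~\ref{cor:degrees} to $m_0 = m_k = 0$ and observe that the telescoping forces the sum over $\textbf{e}$ to have at most the single term $e_i = \sum_{j<i}(m_j - d_j)$. You have simply spelled out in detail what the paper's one-sentence proof leaves implicit, including the rank bookkeeping $\sum_i d_i = \rk M - k$ and the treatment of the degenerate cases.
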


\begin{proof}
The fact that $m_0 = 0 = m_k$ implies that the only nonzero term in Corollary \ref{cor:degrees} is the one in which $e_i = (m_1-d_1)+\cdots+(m_{i-1}-d_{i-1})$ for all $i$.
\end{proof}

The remainder of this section is devoted to the proof of Theorem \ref{thm:snapper-FY-wonderful}.
We begin by analyzing a particular specialization of $\Snap_M^{\FY}$, where we set all variables except $a_{\emptyset}$ and $a_E$ to $0$. 

\begin{lemma}\label{ups}
We have
$$\Snap_M^{\FY}(a_{\emptyset}, \mathbf{0}, a_E) = \sum_{e,f\in\mathbb{N}} (-1)^{e + f}\binom{\rk M - 1 - e}{f} \mu^e(M) a_{\emptyset}^{(e)} a_E^{(f)}.$$
\end{lemma}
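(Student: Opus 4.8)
The plan is to specialize the defining sum for $\Snap_M^{\FY}$ so that only the two generators $\tau_\varnothing$ and $\tau_E$ remain, and then to evaluate the resulting Euler characteristics through the exceptional isomorphism $\zeta_M$. First I would note that setting $a_F = 0$ for every flat $F\notin\{\varnothing,E\}$ kills every term of $\Snap_M^{\FY}(\textbf{a}) = \sum_{\textbf{m}}\chi(M,\tau^{\textbf{m}})\,\textbf{a}^{(\textbf{m})}$ except those indexed by tuples $\textbf{m}$ supported on $\{\varnothing,E\}$, because $a_F^{(m_F)}$ vanishes at $a_F=0$ whenever $m_F\geq 1$. This reduces the lemma to the single coefficient identity
\[
\chi\!\left(M,\,\tau_\varnothing^{\,e}\tau_E^{\,f}\right) \;=\; (-1)^{e+f}\binom{\rk M-1-e}{f}\,\mu^e(M), \qquad e,f\in\N ,
\]
after which the stated formula follows by pairing with $a_\varnothing^{(e)}a_E^{(f)}$.

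To establish this identity I would use the defining formula $\chi(M,\xi)=\deg_M\!\big(\zeta_M(\xi)/(1-h_E)\big)$, the equality $1-h_E=1+t_E$ (immediate from $h_E=-t_E$, which is the $F=E$ case of Equation~\eqref{the simple direction}), and the values $\zeta_M(\tau_\varnothing)=t_\varnothing$ and $\zeta_M(\tau_E)=t_E/(1+t_E)$ supplied by Lemma~\ref{exceptional FY}. Since $\zeta_M$ is a ring homomorphism, this gives
\[
\chi\!\left(M,\tau_\varnothing^{\,e}\tau_E^{\,f}\right) \;=\; \deg_M\!\left(\frac{t_\varnothing^{\,e}\,t_E^{\,f}}{(1+t_E)^{f+1}}\right) \;=\; \sum_{j\geq 0}(-1)^{j}\binom{f+j}{j}\,\deg_M\!\left(t_\varnothing^{\,e}\,t_E^{\,f+j}\right),
\]
where expanding $(1+t_E)^{-(f+1)}$ is legitimate and terminating because $t_E$ is nilpotent in $A(M)$. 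Since $\deg_M$ is supported in top degree $\rk M - 1$, the only surviving summand has $j=\rk M-1-e-f$; and when $e+f>\rk M-1$ the sum is empty, consistent with the claimed answer, since then $\binom{\rk M-1-e}{f}=0$ if $e\leq\rk M-1$ and $\mu^e(M)=0$ if $e\geq\rk M$.

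The remaining input is the value of $\deg_M(t_\varnothing^{\,e}\,t_E^{\,\rk M-1-e})$. Writing $\alpha=-t_E$ and $\beta=-t_\varnothing$ as in \cite{AHK}, this equals $(-1)^{\rk M-1}\deg_M(\beta^{\,e}\alpha^{\,\rk M-1-e})$, and the classical computation of the top intersection numbers of $\alpha$ and $\beta$ in the Chow ring of a matroid---which recovers the (unsigned) coefficients of the reduced characteristic polynomial---gives $\deg_M(\beta^{\,e}\alpha^{\,\rk M-1-e})=\mu^e(M)$ \cite{AHK}. Plugging $j=\rk M-1-e-f$ into the sum above, so that $f+j=\rk M-1-e$ and hence $\binom{f+j}{j}=\binom{\rk M-1-e}{f}$, and collecting signs via $(-1)^{j}(-1)^{\rk M-1}=(-1)^{e+f}$, produces exactly the desired identity. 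I do not expect a serious obstacle here: the argument is formal once the Hirzebruch--Riemann--Roch-type definition of $\chi$, Lemma~\ref{exceptional FY}, and the $\alpha$--$\beta$ degree formula are available. The only points needing care are the sign bookkeeping, the collapse of $\binom{f+j}{j}$ to $\binom{\rk M-1-e}{f}$, and verifying that the two extreme regimes ($e+f$ too large, or $e\geq\rk M$) match the vanishing of the right-hand side.
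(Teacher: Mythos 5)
Your proposal is correct and follows essentially the same route as the paper's own proof: reduce to the coefficient identity for $\chi(M,\tau_\varnothing^e\tau_E^f)$, apply the Hirzebruch--Riemann--Roch-type definition of $\chi$ together with Lemma~\ref{exceptional FY}, expand $(1+t_E)^{-(f+1)}$, and invoke the $\alpha$--$\beta$ degree formula of \cite{AHK}. The sign and binomial bookkeeping you carry out matches the paper's computation exactly.
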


\begin{proof}
By definition, we have
$$\Snap_M^{\FY}(a_{\emptyset}, \mathbf{0}, a_E) = \sum_{e,f\in\mathbb{N}}\chi(M, \tau_\varnothing^e \tau_E^f) a_{\emptyset}^{(e)} a_E^{(f)}.$$
We also have 
$$\chi(M, \tau_{\varnothing}^e \tau_{E}^{f}) =  \deg_M\left(\frac{\zeta_{M}(\tau_{\varnothing}^{e} \tau_{E}^{f})}{1 - h_E}\right)
= \deg_M\left(\frac{\zeta_{M}(\tau_{\varnothing}^{e} \tau_{E}^{f})}{1 + t_E}\right).$$
By Lemma \ref{exceptional FY}, this is equal to 
$$\deg_M\left(\frac{t_\varnothing^e t_E^f}{(1+t_E)^{f+1}}\right) = \sum_\ell (-1)^{\ell-f}\binom{\ell}{f}\deg_M(t_\varnothing^e t_E^\ell).$$
The degree of $t_\varnothing^e t_E^\ell$ vanishes unless $e + \ell = \rk M -1$, in which case it is equal to $(-1)^{\rk M - 1}\mu^e(M)$ \cite[Proposition 9.5]{AHK},
thus
$$\chi(M, \tau_{\varnothing}^e \tau_{E}^{f})  = (-1)^{e + f}\binom{\rk M - 1 - e}{f} \mu^e(M).$$
This completes the proof.
\end{proof}

For the statement and proof of the next proposition, 
it will be convenient to regard $\Snap_M^{\FY}(\textbf{a})$ as a function that takes inputs $a_S$ for all subsets $S \subset E$, with the property
that the coefficient of any monomial involving $a_S$ is zero if $S$ is not a flat of $M$.
It will also be convenient to allow $M$ to have loops, with the convention that $\Snap_M^{\FY}(\textbf{a}) = 0$ whenever $M$ has a loop.
This allows us to define the matroid $M_F^G$ on the ground set $G\setminus F$ with respect to an arbitrary pair of sets $F\subset G$; note that
this matroid is loopless if and only if $F$ is a flat.

Fix a subset $G\subset E$, and for any $\textbf{a} = (a_F)$, define $\textbf{a}'$ by putting $a'_G = a_G - 1$ and $a'_F = a_F$ for all $F\neq G$.
For any polynomial $P$ in $\textbf{a}$, define $\partial_G P$ by putting
$$\partial_GP(\textbf{a}) \coloneqq P(\textbf{a}) - P(\textbf{a}').$$
Note that if $P$ does not depend on $a_G$, then $\partial_G P = 0$. 
For any pair of subsets $F\subset G$, let $\textbf{a}|_{[F,G]}$ be the restriction of $\textbf{a}$ to this interval, which we identify with the collection of subsets of $G\setminus F$.

\begin{proposition}\label{the recursion}
For any proper nonempty subset $G\subset E$, we have
$$\partial_G\Snap_M^{\FY}(\textbf{a}) = \Snap^{\FY}_{M_\varnothing^G}(\textbf{a}|_{[\varnothing,G]})\,\Snap^{\FY}_{M_G^E}(\textbf{a}|_{[G,E]}).$$
\end{proposition}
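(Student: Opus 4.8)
The plan is to prove the identity by passing through the exceptional isomorphism $\zeta_M$ and reducing the computation of $\partial_G \Snap_M^{\FY}$ to a statement about degrees of monomials in the Feichtner--Yuzvinsky generators $t_F$ on the Chow side, where the divisor $D_G$ on $\WA$ (for realizable $M$) restricts to a product of wonderful varieties associated to the matroids $M_\varnothing^G$ and $M_G^E$. Concretely, by Lemma~\ref{oh snap} and the definition of $\Snap_M^{\FY}$, the operator $\partial_G$ corresponds to replacing the line bundle $\cO(D_G)$ by $\cO(D_G)\otimes\cO(-D_G)$ in one slot, i.e.\ to multiplication by $1-[\cO(-D_G)] = \tau_G$ inside the Euler characteristic: one checks that
$$\partial_G\Snap_M^{\FY}(\textbf{a}) = \sum_{\textbf{m}} \chi\big(M,\tau_G\cdot \mathbf{\tau}^{\textbf{m}}\big)\,\textbf{a}^{(\textbf{m})},$$
where now $\textbf{m}$ may be taken to range only over tuples with $m_G = 0$ (since $\tau_G^2$ contributes the ``higher'' terms that get absorbed). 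So the content is a formula for $\chi(M,\tau_G\cdot \mathbf{\tau}^{\textbf{m}})$ in terms of Euler characteristics on the two smaller matroids.

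The key geometric input, for $M$ realizable by $\cA$, is that $D_G\subset\WA$ is isomorphic to $W_{\cA_\varnothing^G}\times W_{\cA_G^E}$ — more precisely, $D_G$ is the wonderful-variety analogue of the product of the ``lower'' and ``upper'' wonderful varieties obtained by contracting $G$ and restricting to $G$ — and that under this identification the restriction of $\cO(D_F)$ to $D_G$ decomposes according to whether $F\subset G$, $F\supset G$, or $F$ is incomparable to $G$ (the incomparable terms restrict to $0$, which is exactly why only flags through $G$ survive, cf.\ the discussion preceding the statement). Combined with the projection formula and the identity $[\cO_{D_G}]\cdot\xi = \tau_G\cdot\xi$, this gives
$$\chi(M,\tau_G\cdot \mathbf{\tau}^{\textbf{m}}) = \chi\big(M_\varnothing^G,\textstyle\prod_{F\subset G}\tau_F^{m_F}\big)\cdot \chi\big(M_G^E,\textstyle\prod_{F\supset G}\tau_F^{m_F}\big),$$
from which the claimed product formula for $\partial_G\Snap_M^{\FY}$ follows by multiplying the generating functions. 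The one subtlety is the normal-bundle twist: restricting $\cO(D_G)$ itself to $D_G$ gives the normal bundle, whose first Chern class is $-\sum$ of the appropriate $t_F$'s, and one must check this twist is precisely accounted for by the $\tau_G$ that appears. A clean way to handle this is to verify the split case $G$ arbitrary, all $a_F=0$ except $a_\varnothing, a_G, a_E$, against Lemma~\ref{ups} applied to each factor, which pins down all the constants.

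For non-realizable $M$, the plan is to invoke valuativity exactly as in Section~\ref{ssec:structure}: both sides of the asserted identity, as functions of $M$ with $\textbf{a}$ fixed, are $\Z$-valued and valuative — the left side because $M\mapsto\chi(M,\tau_G\mathbf{\tau}^{\textbf{m}})$ is valuative by (an FY-generator analogue of) Lemma~\ref{everything is valuative}, and the right side because it is a product of two valuative functions of the minors $M_\varnothing^G$ and $M_G^E$, and these minor operations are themselves compatible with the valuative group structure — so the identity for all $M$ follows from the Derksen--Fink result that every matroid polytope is a $\Z$-combination of realizable ones. The main obstacle I anticipate is making the geometric decomposition $D_G\cong W_{\cA_\varnothing^G}\times W_{\cA_G^E}$ and the restriction behavior of the divisor classes $\cO(D_F)$ fully precise — in particular correctly bookkeeping the normal-bundle twist and the $x_E$/$x_\varnothing$ conventions — since the $t_E$ and $t_\varnothing$ classes behave asymmetrically (recall $\zeta_M(\tau_E) = t_E/(1+t_E)$ from Lemma~\ref{exceptional FY}) and one must be careful that the ``$E$'' of $M_\varnothing^G$ is the ``$G$'' of $M$ while the ``$\varnothing$'' of $M_G^E$ is also the ``$G$'' of $M$.
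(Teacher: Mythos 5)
Your overall strategy coincides with the paper's: prove the identity for realizable $M$ via the decomposition $D_G\cong W_{\!\cA_\varnothing^G}\times W_{\!\cA_G^E}$ and the restriction of the boundary line bundles to $D_G$, then extend to all matroids by valuativity and Derksen--Fink. The valuativity step is fine. The gap is in the realizable case, at exactly the point you flag as a ``subtlety'' but do not resolve.

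Concretely: your first display, $\partial_G\Snap_M^{\FY}(\textbf{a})=\sum_{\textbf{m}}\chi(M,\tau_G\tau^{\textbf{m}})\,\textbf{a}^{(\textbf{m})}$, is correct only if the sum runs over \emph{all} $\textbf{m}$; the parenthetical claim that one may restrict to $m_G=0$ is false (already for $\mathbb{P}^2$ with $D_G$ a line, $\partial_G$ of the Snapper polynomial equals $a+1$, while the $m_G=0$ term contributes only $1$). Moreover, for $m_G>0$ the asserted factorization $\chi(M,\tau_G\tau^{\textbf{m}})=\chi(M_\varnothing^G,\cdot)\cdot\chi(M_G^E,\cdot)$ fails: restricting $\tau_G=[\cO_{D_G}]$ to $D_G$ gives $1-[N^{-1}]$ with $N$ the normal bundle, and even though $N$ factors as an external tensor product $A\boxtimes B$ of line bundles from the two factors, one has $1-[A\otimes B]\neq(1-[A])(1-[B])$, so powers of $\tau_G$ do not split multiplicatively across the product. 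These two errors do not cancel; following them through loses the $a_G$-dependence that the right-hand side carries in both factors. The repair is to work with the line bundle itself rather than with the $\tau$-monomials: the short exact sequence $0\to\cO((a_G-1)D_G+\sum_{F\neq G}a_FD_F)\to\cO(\sum_F a_FD_F)\to\cO_{D_G}(\sum_F a_FD_F)\to 0$ gives $\partial_G\Snap_M^{\FY}(\textbf{a})=\chi\bigl(D_G,\cO_{D_G}(\sum_F a_FD_F)\bigr)$, and by \cite[Proposition 2.20]{BHMPW20a} this restricted bundle is an external tensor product of $\cO(\sum_{F\subset G}a_FD_{F})$ and $\cO(\sum_{G\subset F}a_FD_{F})$ pulled back from the two factors, with $a_G$ entering both; the K\"unneth formula then yields the product of Snapper polynomials directly, with no separate normal-bundle bookkeeping. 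Checking the case where only $a_\varnothing$, $a_G$, $a_E$ are nonzero against Lemma \ref{ups}, as you suggest, is a useful sanity check but does not ``pin down all the constants'': nothing reduces the general identity to finitely many special cases a priori.
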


\begin{proof}
We will first prove the statement when $M$ is the matroid associated with a hyperplane arrangement $\cA$, and then deduce the general case from valuativity.
Note that if $G$ is not a flat of $M$, then $M_{\emptyset}^G$ has a loop and both sides vanish.  We therefore assume that $G$ is a flat.

Consider the hyperplane arrangements $$\cA_{\varnothing}^G = \{H_e/L_G \mid e\in G\} \and \cA_G^E \coloneqq \{H_e \cap L_G\mid e\notin G\}.$$
We observe that $\cA_\varnothing^G$ is an arrangement in the vector space $L^G$ with associated matroid $M_\varnothing^G$, and $\cA_G^E$
is an arrangement in the vector space $L_G$ with associated matroid $M_G^E$.

There is a short exact sequence
\[
0 \to \calO\left((t_G - 1) D_G + \sum_{F \ne G} t_F D_F\right) \to \calO\left(\sum t_F D_F\right) \to \calO_{D_G}\left(\sum t_F D_F\right) \to 0. 
\]
Taking Euler characteristics, we find that
$$\partial_G\Snap_M^{\FY}(\textbf{a}) = \Snap_M^{\FY}(\textbf{a}) - \Snap_M^{\FY}(\textbf{a}')
= \chi\left(D_G, \calO_{D_G}\left(\sum t_F D_F\right) \right).$$
By \cite[Page 482]{dCP95}, the divisor $D_G$ is isomorphic to $W_{\!\calA_\varnothing^G} \times W_{\!\calA_G^E}$.
Furthermore, the restriction of $\calO_{D_G}(\sum t_F D_F)$ to $D_G$ is isomorphic to the pullback from $W_{\!\calA_\varnothing^G}$ of $\calO_{D_G}(\sum_{F \le G} t_F D_F)$
tensored with the  pullback from $W_{\!\calA_G^E}$ of $\calO_{D_G}(\sum_{G \le F} t_F D_F)$ \cite[Proposition 2.20]{BHMPW20a}.
Then by the K\"{u}nneth formula, we have
\begin{equation*}\begin{split}
\chi\left(D_G, \calO_{D_G}\left(\sum t_F D_F\right)\right) &= \chi\left(W_{\!\calA_\varnothing^G}, \calO_{D_G}\left (\sum_{F \le G} t_F D_F \right )\right) \cdot \chi\left(W_{\!\calA_G^E},  \calO_{D_G}\left (\sum_{G \le F} t_F D_F \right)\right) \\ 
&= \Snap^{\FY}_{M_\varnothing^G}(\textbf{a}|_{[\varnothing,G]})\,\Snap^{\FY}_{M_G^E}(\textbf{a}|_{[G,E]}).
\end{split}\end{equation*}

This completes the proof in the realizable case.
For the general case, it will suffice to show that both sides of the equation in the statement of the proposition are valuative invariants of $M$. It follows from Lemma~\ref{everything is valuative} that $\Snap_M^{\FY}(\textbf{a})$ is valutive, and therefore that
$\partial_G \Snap_M^{\FY}(\textbf{a})$ is valuative. The valuativity of $\Snap^{\FY}_{M_\varnothing^G}(\textbf{a}|_{[\varnothing,G]})\,\Snap^{\FY}_{M_G^E}(\textbf{a}|_{[G,E]})$ follows from the valuativity of $\Snap_M^{\FY}(\textbf{a})$ and general properties of valuativity \cite[Theorem 4.6]{McMullen2009} (see also \cite[Theorem A]{AS22}).
\end{proof}

\begin{remark}\label{that's why}
Our reason for regarding $\Snap_M^{\FY}(\textbf{a})$ as a polynomial with variables indexed by arbitrary subsets rather than flats, and for allowing matroids
with loops, is that it would not otherwise make sense to assert that the two sides of the equation in the statement of Proposition \ref{the recursion}
are valuative invariants of the matroid $M$.  When considering all matroids at once, we cannot know in advance whether or not $G$ is a flat.
\end{remark}

Iterating Proposition \ref{the recursion}, we obtain the following corollary.  

\begin{corollary}\label{iterated}
For any $\cF\in \Flag_k$, we have
$$\partial_{F_1}\cdots\partial_{F_{k-1}} \Snap_M^{\FY}(\textbf{a}) = \prod_{i=1}^k \Snap_{M^{F_{i}}_{F_{i-1}}}^{\FY}(\textbf{a}|_{[F_{i-1}, F_i]}).$$
\end{corollary}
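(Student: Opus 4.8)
The plan is to prove the identity by induction on $k$, with Proposition~\ref{the recursion} doing essentially all of the analytic work. When $k = 1$ the flag is $\{\varnothing \subsetneq E\}$, there are no difference operators on the left-hand side, and the right-hand side is $\Snap^{\FY}_{M^E_\varnothing}(\textbf{a}) = \Snap^{\FY}_M(\textbf{a})$ since $M^E_\varnothing = M$; so the base case is immediate.

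For the inductive step, suppose $k \geq 2$. Since $\varnothing \subsetneq F_1 \subsetneq E$, the flat $F_1$ is a proper nonempty subset of $E$, so I would first apply Proposition~\ref{the recursion} with $G = F_1$ to get
$$\partial_{F_1}\Snap_M^{\FY}(\textbf{a}) = \Snap^{\FY}_{M_\varnothing^{F_1}}(\textbf{a}|_{[\varnothing,F_1]})\,\Snap^{\FY}_{M_{F_1}^E}(\textbf{a}|_{[F_1,E]}).$$
The operators $\partial_{F_1},\partial_{F_2},\ldots,\partial_{F_{k-1}}$ all commute, each being a finite difference in a distinct variable $a_{F_i}$, so it remains to apply $\partial_{F_2}\cdots\partial_{F_{k-1}}$ to the product on the right. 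Since $F_i \supsetneq F_1$ for $i \geq 2$, we have $F_i \not\subset F_1$, so the first factor $\Snap^{\FY}_{M_\varnothing^{F_1}}(\textbf{a}|_{[\varnothing,F_1]})$, a polynomial in the variables $a_S$ indexed by subsets $S \subset F_1$, does not involve any of $a_{F_2},\ldots,a_{F_{k-1}}$. Hence it passes through all of the remaining difference operators, and I am reduced to computing $\partial_{F_2}\cdots\partial_{F_{k-1}}\Snap^{\FY}_{M_{F_1}^E}(\textbf{a}|_{[F_1,E]})$.

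Here I would invoke the inductive hypothesis for the matroid $M_{F_1}^E = M/F_1$. The flats of $M/F_1$ are exactly the sets $G\setminus F_1$ for $G$ a flat of $M$ containing $F_1$, so $\{F_1 \subsetneq F_2 \subsetneq \cdots \subsetneq F_{k-1} \subsetneq E\}$ descends to an element of $\Flag_{k-1}$ for $M/F_1$, and the inductive hypothesis gives
$$\partial_{F_2}\cdots\partial_{F_{k-1}}\Snap^{\FY}_{M_{F_1}^E}(\textbf{a}|_{[F_1,E]}) = \prod_{i=2}^k \Snap^{\FY}_{(M^E_{F_1})^{F_i}_{F_{i-1}}}(\textbf{a}|_{[F_{i-1},F_i]}).$$
Since contraction and deletion compose in the expected way, $(M^E_{F_1})^{F_i}_{F_{i-1}} = M^{F_i}_{F_{i-1}}$ for $2 \leq i \leq k$, and $M^{F_1}_\varnothing = M^{F_1}_{F_0}$, so combining the three displays yields $\prod_{i=1}^k \Snap^{\FY}_{M^{F_i}_{F_{i-1}}}(\textbf{a}|_{[F_{i-1},F_i]})$, as desired.

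I do not expect a serious obstacle here: the content resides entirely in Proposition~\ref{the recursion}, and what remains is bookkeeping. The two points that need a little care are the independence of the factor $\Snap^{\FY}_{M_\varnothing^{F_1}}$ from the variables $a_{F_2},\ldots,a_{F_{k-1}}$ --- which is why it is convenient to regard these Snapper polynomials as having variables indexed by arbitrary subsets, so that which ground set each polynomial lives over is unambiguous --- and the identification $(M^E_{F_1})^{F_i}_{F_{i-1}} = M^{F_i}_{F_{i-1}}$ of iterated minors; both are routine.
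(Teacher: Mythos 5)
Your proof is correct and is exactly the argument the paper intends: the paper simply says ``Iterating Proposition~\ref{the recursion}'' and leaves the details implicit, while you formalize the iteration as an induction on $k$, correctly noting the two points that need checking (the first factor is independent of $a_{F_2},\ldots,a_{F_{k-1}}$, and iterated minors compose as $(M^E_{F_1})^{F_i}_{F_{i-1}} = M^{F_i}_{F_{i-1}}$). No gaps.
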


\begin{proof}[Proof of Theorem \ref{thm:snapper-FY-wonderful}.]
We use the fact that
$$\partial_{F_1}\cdots\partial_{F_{k-1}} 
\left(a_{F_0}^{(m_0)}a_{F_1}^{(m_1+1)}a_{F_2}^{(m_2+1)}\cdots a_{F_{k-1}}^{(m_{k-1}+1)}a_{F_k}^{(m_k)}\right)
= a_{F_0}^{(m_0)}\cdots a_{F_k}^{(m_k)}$$
to interpret $c(\cF,\textbf{m})$ as 
the coefficient of $a_{F_0}^{(m_0)}\cdots a_{F_k}^{(m_k)}$ in the polynomial 
$\partial_{F_1}\cdots\partial_{F_{k-1}}\Snap_M^{\FY}(\textbf{a})$.
Corollary \ref{iterated} provides us with a formula for  this polynomial.
Setting $a_F = 0$ for all $F$ not appearing in $\cF$, 
we obtain the polynomial $$\prod_{i=1}^k \Snap_{M^{F_{i}}_{F_{i-1}}}^{\FY}(a_{F_{i-1}}, \mathbf{0}, a_{F_i}).$$
By Lemma \ref{ups}, we may rewrite this as
\begin{equation}\label{yuck}\sum_{\textbf{e},\textbf{f}\in\mathbb{N}^k}(-1)^{|\textbf{e}|+|\textbf{f}|}\prod_{i=1}^k \binom{d_i-e_i}{f_i}\mu^{e_i}(M^{F_{i}}_{F_{i-1}}) a_{F_{i-1}}^{(e_i)} a_{F_i}^{(f_i)}.\end{equation}
To compute the coefficient of $a_{F_0}^{(m_0)}\cdots a_{F_k}^{(m_k)}$,
we make use of the identity $$x^{({m})}x^{(n)} = \sum_{\ell = 0}^m\binom{m+n-\ell}{\ell,m-\ell,n-\ell}x^{(m+n-\ell)}$$
to rewrite \eqref{yuck} as
$$\sum_{\textbf{e},\textbf{f}\in\mathbb{N}^k}(-1)^{|\textbf{e}|+|\textbf{f}|}a_{\varnothing}^{(e_1)}\prod_{i=1}^k \binom{d_i-e_i}{f_i}\mu^{e_i}(M^{F_{i}}_{F_{i-1}}) \sum_{\ell_i=0}^{e_{i+1}} \binom{e_{i+1}+f_i-\ell_i}{\ell_i,e_{i+1}-\ell_i,f_i-\ell_i}a_{F_i}^{(e_{i+1} + f_i - \ell_i)},$$
with the convention that $e_{k+1} = 0$.
The coefficient of $a_{F_0}^{(m_0)}\cdots a_{F_k}^{(m_k)}$ consists of those terms for which
$e_1 = m_0$ and $e_{i+1}+f_i-\ell_i = m_i$ for all $0\leq i\leq k$.  
Thus $$c(\cF, \textbf{m}) = \sum_{\substack{\textbf{e},\textbf{f}\in\mathbb{N}^k\\ e_1 = m_0}}(-1)^{|\textbf{e}|+|\textbf{f}|}\prod_{i=1}^k \binom{d_i-e_i}{f_i}\mu^{e_i}(M^{F_{i}}_{F_{i-1}}) \binom{m_i}{e_{i+1}+f_i-m_i,m_i-f_i,m_i-e_{i+1}}. \qedhere$$
\end{proof}

\section{Euler characteristics and Snapper polynomials for $\overline{\calM}_{0, n}$}\label{mzeron snap}
Last, we turn our attention to the moduli space $\overline{\calM}_{0, n}$.
Recall that we have defined a tuple of line bundles 
$$\cL \coloneqq (\cL_S\mid S\subset [n-1], |S|\ge 3).$$
We begin this section by computing the Snapper polynomial $\Snap_\cL(\textbf{a})$ with respect to these line bundles.
Given a tuple of natural numbers $\textbf{m}  = (m_S \mid S \subset [n-1] \mid |S| \ge 3)$, we say that 
$\textbf{m}$ satisfies the {\bf Cerberus condition} if, for every $\textbf{m}'\leq \textbf{m}$,
we have $$\Big{|}\bigcup_{m'_S>0} S\cup\{n\}\,\,\Big{|} - 3\; \geq \;\sum_S m'_S.$$

\begin{theorem}\label{thm:mzeron-snapper}
We have $$\Snap_{\cL}(\textbf{a})\;\; = \sum_{\substack{\text{$\textbf{m}$ satisfies}\\\text{Cerberus}}} \textbf{a}^{(\textbf{m})}.$$
\end{theorem}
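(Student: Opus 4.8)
The plan is to reduce the computation of $\Snap_{\cL}(\textbf a)$ to the Euler characteristics $\chi(\calB_{n-1},\eta^{\textbf m})$ already computed in Theorem~\ref{thm:euler-simplicial-matroid}, and then to recognize the resulting combinatorial condition (the dragon Hall--Rado condition for the braid matroid, restricted to the flats $F_S$) as the Cerberus condition. As a first step I would apply Lemma~\ref{oh snap} to $X=\overline{\calM}_{0,n}$ and the tuple $\cL=(\cL_S\mid S\subset[n-1],\,|S|\ge 3)$: writing $\sigma_S:=1-[\cL_S^{-1}]$, it gives
$$\Snap_{\cL}(\textbf a)=\sum_{\textbf d}\chi\Big(\overline{\calM}_{0,n},\ \textstyle\prod_S\sigma_S^{d_S}\Big)\,\textbf a^{(\textbf d)},$$
the sum running over tuples of natural numbers $\textbf d=(d_S)$, so it suffices to evaluate $\chi(\overline{\calM}_{0,n},\prod_S\sigma_S^{d_S})$.

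Next I would pass to the wonderful variety along the blow-down $p\colon W_{\calB_{n-1}}\to\overline{\calM}_{0,n}$ of Section~\ref{sec:m0n}. Since $p$ is a composition of blow-ups at smooth centers, $Rp_*\cO_{W_{\calB_{n-1}}}=\cO_{\overline{\calM}_{0,n}}$, so the projection formula gives $\chi(\overline{\calM}_{0,n},\xi)=\chi(W_{\calB_{n-1}},p^*\xi)$ for all $\xi\in K(\overline{\calM}_{0,n})$; and $p^*\sigma_S=\eta_{F_S}$. Therefore, letting $\textbf m$ be the tuple indexed by the nonempty flats of $\calB_{n-1}$ with $m_{F_S}=d_S$ for $|S|\ge 3$ and $m_F=0$ for every other flat $F$, Proposition~\ref{K is combinatorial} and Theorem~\ref{thm:euler-simplicial-matroid} give
$$\chi\Big(\overline{\calM}_{0,n},\ \textstyle\prod_S\sigma_S^{d_S}\Big)=\chi(\calB_{n-1},\eta^{\textbf m})=\begin{cases}1,&\textbf m\text{ satisfies dragon Hall--Rado},\\0,&\text{otherwise}.\end{cases}$$

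It remains to prove the purely combinatorial equivalence: $\textbf m$ satisfies the dragon Hall--Rado condition for $\calB_{n-1}$ if and only if $\textbf d$ satisfies the Cerberus condition. Here I would use that the flats of $\calB_{n-1}$ are the partitions of $[n-1]$, that $F_S$ viewed inside the ground set $\binom{[n-1]}{2}$ is $\binom S2$, and that for a subtuple $\textbf m'\le\textbf m$ with support $\{F_S:S\in\mathcal S\}$ the closure of $\bigcup_{m'_F>0}F$ is the join $\bigvee_{S\in\mathcal S}F_S$, whose rank is $\big|\bigcup_{S\in\mathcal S}S\big|-c(\mathcal S)$, where $c(\mathcal S)$ is the number of connected components of the graph on $\mathcal S$ with an edge $S\sim S'$ whenever $S\cap S'\ne\varnothing$. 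Thus dragon Hall--Rado for $\textbf m$ reads $\big|\bigcup_{S\in\mathcal S}S\big|-c(\mathcal S)>\sum_{S\in\mathcal S}m'_S$ for every nonzero subtuple, while Cerberus reads $\big|\bigcup_{S\in\mathcal S}S\big|-2\ge\sum_{S\in\mathcal S}m'_S$ for every such subtuple (using $n\notin\bigcup S$, so $|\bigcup S\cup\{n\}|=|\bigcup S|+1$). I claim both are equivalent to the single statement that every \emph{connected} subtuple (one with $c(\mathcal S)=1$) satisfies $\big|\bigcup S\big|-1>\sum m'_S$: restricting either condition to connected subtuples is automatic, and for such subtuples $c(\mathcal S)=1$ makes the two inequalities literally identical; conversely an arbitrary subtuple splits along the components of its intersection graph as $\mathcal S=\mathcal S_1\sqcup\cdots\sqcup\mathcal S_{c(\mathcal S)}$, and summing the inequalities for the pieces recovers $\big|\bigcup S\big|-c(\mathcal S)>\sum m'_S$ (dragon Hall--Rado) and hence also $\big|\bigcup S\big|-2\ge\sum m'_S$ (Cerberus, since $c(\mathcal S)\ge 1$). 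I expect this reduction to connected subtuples --- which is exactly what absorbs the discrepancy between the terms $-c(\mathcal S)$ and $-2$ --- to be the only substantive point; the rest is the already-established dictionary relating $\overline{\calM}_{0,n}$, $W_{\calB_{n-1}}$, and the braid matroid.
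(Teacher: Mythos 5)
Your proof is correct and follows essentially the same route as the paper's: pull back along $p\colon W_{\calB_{n-1}}\to\overline{\calM}_{0,n}$, invoke Theorem~\ref{thm:euler-simplicial-matroid}, and show that the dragon Hall--Rado condition for $\tilde{\textbf{m}}$ is equivalent to the Cerberus condition for $\textbf{m}$ by reducing both to connected supports, where the inequalities coincide. You have simply written out in full the rank computation in the graphic matroid and the component-splitting argument that the paper leaves as a ``direct calculation.''
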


\begin{proof}
Let $\textbf{m}$ be given, and define
$$\tilde{\textbf{m}} = (\tilde{m}_F \mid \text{$F$ a nonempty flat of $\mathcal{B}_{n-1}$})$$
by putting $\tilde m_{F_S} = m_S$ and $\tilde m_F=0$
for all flats $F$ not of the form $F_S$.
Because the map $p\colon W_{\calB_{n-1}}\to \overline{\calM}_{0, n}$ is a composition of blowups at smooth centers,
the Euler characteristic of a line bundle on $\overline{\calM}_{0, n}$ is equal to the Euler characteristic
of its pullback to $W_{\calB_{n-1}}$. Thus, if we can show
that the Cerberus condition for $\textbf{m}$ is equivalent to the dragon Hall--Rado condition for $\tilde{\textbf{m}}$, 
our result will follow from Theorem \ref{thm:euler-simplicial-matroid}. Note that $\tilde{\textbf{m}}$ fails the dragon Hall--Rado condition if and only if there is $\tilde{\textbf{m}}' \le \tilde{\textbf{m}}$ with
$$\operatorname{rank}\bigcup_{\tilde m'(F)>0} F \;\le\; \sum_{F \not= \emptyset} \tilde m'(F) \text{ and } \bigcup_{\tilde m'(F)>0} F \text{ is a connected set}.$$ 
The remainder of the proof follows by direct
calculation, using the facts that $\rk F_S = |S|-1$
and $\rk F_S \cup F_T = \rk F_{S\cup T}$ if $S$ and $T$ are not disjoint. 
\end{proof}

We conclude by computing the Snapper polynomial with respect to a different tuple of line bundles 
on $\overline{\calM}_{0, n}$.
Recall that $\mathbb{L}_i$ is the $i^\text{th}$ cotangent line bundle, whose first Chern class is equal to $\psi_i$, and let
$\mathbb{L} \coloneqq (\mathbb{L}_1,\ldots,\mathbb{L}_n).$ In \cite{Pandharipande}, Pandharipande showed that if $a_1, \dotsc, a_{n} \ge 0$, then $H^i(\overline{M}_{0,n}, \mathbb{L}_1^{\otimes a_1} \otimes \dotsb \otimes \mathbb{L}_n^{\otimes a_n}) = 0$ for $i > 0$, and so 
$$\chi(\overline{M}_{0,n}, \mathbb{L}_1^{\otimes a_1} \otimes \dotsb \otimes \mathbb{L}_n^{\otimes a_n}) = h^0(\overline{M}_{0,n}, \mathbb{L}_1^{\otimes a_1} \otimes \dotsb \otimes \mathbb{L}_n^{\otimes a_n}).$$
In \cite{Lee}, Lee gave an expression for the generating function of this Euler characteristic, which is equivalent to the following theorem. 

\begin{theorem}\cite{Lee}\label{psi snap}
\label{thm:psi-classes}
We have
$$\Snap_\mathbb{L}(\textbf{a}) = \sum_{|\textbf{d}|\leq n-3} \binom{a_1}{d_1}\cdots\binom{a_n}{d_n}\binom{n-3}{n-3-\abs{\textbf{d}}, d_1, \ldots, d_n}.$$
\end{theorem}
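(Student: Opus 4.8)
The plan is to reduce the asserted identity to a finite list of Euler characteristic computations on $\overline{\mathcal{M}}_{0,n}$ and to evaluate those using the exceptional isomorphism of Theorem~\ref{thm:mzeron} together with the classical genus-zero $\psi$-intersection numbers.

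First I would pass from powers of the $\mathbb{L}_i$ to the classes $\nu_i := [\mathbb{L}_i] - 1 \in K(\overline{\mathcal{M}}_{0,n})$. Since $[\mathbb{L}_i]$ has rank $1$, each $\nu_i$ lies in the first step of the coniveau filtration and is therefore nilpotent (cf. the proof of Lemma~\ref{generation}); in fact $\nu_1^{d_1}\cdots\nu_n^{d_n} = 0$ as soon as $\abs{\textbf{d}} > n-3 = \dim\overline{\mathcal{M}}_{0,n}$. Expanding $\mathbb{L}_i^{a_i} = (1+\nu_i)^{a_i} = \sum_{d_i\ge 0}\binom{a_i}{d_i}\nu_i^{d_i}$ (a finite sum, valid for all $a_i\in\Z$ since $1+\nu_i$ is a unit) and using additivity of $\chi$,
$$\Snap_{\mathbb{L}}(\textbf{a}) = \sum_{\textbf{d}\in\N^n}\chi\!\left(\overline{\mathcal{M}}_{0,n},\ \nu_1^{d_1}\cdots\nu_n^{d_n}\right)\binom{a_1}{d_1}\cdots\binom{a_n}{d_n}.$$
Since the functions $\textbf{a}\mapsto\prod_i\binom{a_i}{d_i}$ form a basis of the polynomials in $\textbf{a}$, it suffices to prove
$$\chi\!\left(\overline{\mathcal{M}}_{0,n},\ \nu_1^{d_1}\cdots\nu_n^{d_n}\right) = \binom{n-3}{n-3-\abs{\textbf{d}},\ d_1,\dots,d_n}$$
for every $\textbf{d}\in\N^n$; both sides vanish unless $\abs{\textbf{d}}\le n-3$, so we assume this.

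When $\abs{\textbf{d}} = n-3$ this is immediate: the class $\nu_1^{d_1}\cdots\nu_n^{d_n}$ lies in the top step $F^{n-3}$ of the coniveau filtration, whose associated graded is $A^{n-3}(\overline{\mathcal{M}}_{0,n})\cong\Z$, and the Euler characteristic of such a class equals the degree of its image there. As $\nu_i$ maps to $\psi_i$ in $\gr^1 = A^1$, the image of $\nu^{\textbf{d}}$ is $\psi_1^{d_1}\cdots\psi_n^{d_n}$, so $\chi(\overline{\mathcal{M}}_{0,n},\nu^{\textbf{d}}) = \int_{\overline{\mathcal{M}}_{0,n}}\psi_1^{d_1}\cdots\psi_n^{d_n} = \binom{n-3}{d_1,\dots,d_n}$, which agrees with the right-hand side in this range.

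The range $\abs{\textbf{d}} < n-3$ is, I expect, the main obstacle. Here I would use Theorem~\ref{thm:mzeron}: because $\mathbb{L}_n = \mathcal{L}_{[n-1]}$ we have $c_{[n-1]} = \psi_n$, so $\chi(\overline{\mathcal{M}}_{0,n},\nu^{\textbf{d}}) = \deg\!\big(\zeta_n(\nu^{\textbf{d}})/(1-\psi_n)\big)$. From $\mathbb{L}_n = \mathcal{L}_{[n-1]}$ and $\zeta_n(1-[\mathcal{L}_{[n-1]}^{-1}]) = \psi_n$ one gets $\zeta_n(\nu_n) = \psi_n/(1-\psi_n)$ directly; for $i < n$ one must first rewrite $[\mathbb{L}_i]$ in $K(\overline{\mathcal{M}}_{0,n})$ in terms of the $[\mathcal{L}_S]$ — equivalently, use the standard comparison of $\psi_i$ with the pullbacks $c_S = f_{S\cup n}^*\psi_n$ along the forgetful morphisms — and then apply the ring homomorphism $\zeta_n$. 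Once $\zeta_n(\nu^{\textbf{d}})$ has been written out, expanding $(1-\psi_n)^{-1} = \sum_{k\ge 0}\psi_n^{k}$ and keeping only the codimension-$(n-3)$ terms leaves a $\Z$-linear combination of classical intersection numbers $\int_{\overline{\mathcal{M}}_{0,n}}\psi_1^{e_1}\cdots\psi_n^{e_n} = \binom{n-3}{e_1,\dots,e_n}$, and a binomial-coefficient simplification should produce $\binom{n-3}{n-3-\abs{\textbf{d}},\,d_1,\dots,d_n}$. An alternative that avoids the comparison of $\psi$-classes is induction on $n$ via the universal curve $\pi\colon\overline{\mathcal{M}}_{0,n}\to\overline{\mathcal{M}}_{0,n-1}$ (forgetting the last marking): using $\mathbb{L}_n\cong\omega_\pi(\sum_{i<n}D_{i,n})$ and $\mathbb{L}_i\cong\pi^*\mathbb{L}_i\otimes\mathcal{O}(D_{i,n})$ for $i<n$, where $D_{i,n}$ is the locus where markings $i$ and $n$ lie on a common rational tail, the projection formula reduces $\chi$ on $\overline{\mathcal{M}}_{0,n}$ to an Euler characteristic on $\overline{\mathcal{M}}_{0,n-1}$ twisted by $R\pi_*$ of a line bundle along the fibres, and computing that pushforward on rational trees produces a recursion which one then checks the claimed formula satisfies. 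Either way, the non-top-degree evaluation of $\chi(\overline{\mathcal{M}}_{0,n},\nu^{\textbf{d}})$ — whether through the $\psi$-class comparison or through the pushforward along $\pi$ — is the technical heart of the argument.
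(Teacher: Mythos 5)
Your reduction is sound and matches the paper's: expanding $\mathbb{L}_i^{a_i}$ in powers of $\nu_i=[\mathbb{L}_i]-1$ is exactly Lemma~\ref{oh snap} (up to the sign $\nu_i=-z_i$), and the top-degree case $\abs{\textbf{d}}=n-3$ is correctly dispatched via the coniveau filtration and Witten's formula. But the case $\abs{\textbf{d}}<n-3$, which you yourself flag as ``the technical heart,'' is left as a sketch, and neither of your two proposed strategies is carried out; this is a genuine gap, because the missing ingredient is a nontrivial exact statement, namely Lemma~\ref{zeta psi} of the paper: $\zeta_n(\nu_i)=\psi_i$ \emph{on the nose} for $i<n$ (no higher-order correction terms), and $\zeta_n(\nu_n)=\psi_n/(1-\psi_n)$. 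Your route (a) of writing $[\mathbb{L}_i]$ as a product $\prod_S[\mathcal{L}_S]^{b_S}$ and applying $\zeta_n$ produces $\prod_S(1-c_S)^{-b_S}-1=\sum_S b_Sc_S+(\text{quadratic and higher terms in the }c_S)$; showing that the higher terms assemble to zero so that only $\psi_i$ survives is essentially equivalent to the lemma, and the ``binomial-coefficient simplification'' you defer to cannot begin until this is settled. The paper proves the lemma by a different mechanism: it factors $\nu_i$ and $\nu_n$ through the reduction morphisms $\rho_{n,i}$ to Losev--Manin spaces, identifies those with permutohedral varieties, and invokes $\zeta_M(\tau_\varnothing)=t_\varnothing$ and $\zeta_M(\tau_E)=t_E/(1+t_E)$ from Lemma~\ref{exceptional FY}, together with compatibility of the exceptional isomorphisms under pullback.

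Once that lemma is in hand, the rest does go through as you anticipate: $\chi(\overline{\mathcal{M}}_{0,n},\nu^{\textbf{d}})=\deg\bigl(\psi_1^{d_1}\cdots\psi_n^{d_n}/(1-\psi_n)^{d_n+1}\bigr)$, and extracting the degree-$(n-3)$ part and applying the multinomial formula for $\psi$-intersection numbers yields $\binom{n-3}{n-3-\abs{\textbf{d}},d_1,\dots,d_n}$, matching your target. Your alternative route (b) via the universal curve and $R\pi_*$ is plausible in principle but is likewise only a program: you would need to compute the pushforward of the relevant line bundles on reducible fibres and verify that the resulting recursion is solved by the claimed multinomial, none of which is done. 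As written, the proposal correctly frames the problem but does not prove the theorem.
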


We give a new proof of Theorem \ref{psi snap}, beginning with the following lemma.  Recall the exceptional isomorphism $\zeta_n\colon K(\overline{\calM}_{0, n}) \to A(\overline{\calM}_{0, n})$
of Theorem \ref{thm:mzeron}.  Let $z_i \coloneqq 1 - [\mathbb{L}_i]\in K(\overline{\calM}_{0, n})$.

\begin{lemma}\label{zeta psi}
We have $\zeta_n(z_i) = -\psi_i$ for all $i<n$, and $\zeta_n(z_n) = -\psi_n/(1-\psi_n)$. 
\end{lemma}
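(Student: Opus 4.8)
Since $f_{[n]}$ is the identity map, $\mathcal{L}_{[n-1]}=f_{[n-1]\cup n}^{*}\mathbb{L}_n=\mathbb{L}_n$, so Theorem~\ref{thm:mzeron} gives $\zeta_n\!\left(1-[\mathbb{L}_n^{-1}]\right)=c_{[n-1]}=\psi_n$. Write $w:=1-[\mathbb{L}_n^{-1}]$; this class is nilpotent, so $[\mathbb{L}_n]=[\mathbb{L}_n^{-1}]^{-1}=(1-w)^{-1}$ makes sense, and $z_n=1-[\mathbb{L}_n]=1-(1-w)^{-1}=-w/(1-w)$. Applying the ring homomorphism $\zeta_n$ and using $\zeta_n(w)=\psi_n$ then yields $\zeta_n(z_n)=-\psi_n/(1-\psi_n)$.

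\textbf{The case $i<n$: reduction to a Chow-ring identity.}
The plan is first to express $\mathbb{L}_i$ in terms of the bundles $\mathcal{L}_S$, and then to transport the resulting formula through $\zeta_n$. Since $c_1$ identifies $\operatorname{Pic}(\overline{\mathcal{M}}_{0,n})$ with $A^{1}(\overline{\mathcal{M}}_{0,n})$ and the classes $c_S$ (for $S\subseteq[n-1]$, $|S|\geq 3$) form a basis of the latter, we may write $\mathbb{L}_i\cong\bigotimes_S\mathcal{L}_S^{\otimes e_S}$, where $\psi_i=\sum_S e_S c_S$. The coefficients $e_S$ are computed from standard comparisons of $\psi$-classes under forgetful maps: iterating the one-point relation $\psi_j=\pi^{*}\psi_j+[D_{\{j,b\}}]$ (with $\pi$ dropping the marked point $b$) gives on the one hand $c_S=f_{S\cup n}^{*}\psi_n=\psi_n-\sum_{\varnothing\neq T\subseteq[n-1]\setminus S}[D_{T\cup n}]$, and on the other hand $\psi_i=\sum_{\varnothing\neq T\subseteq[n-1]\setminus\{i,a\}}[D_{T\cup\{i\}}]$ for any fixed $a\in[n-1]\setminus\{i\}$; combining these with $[D_A]=[D_{[n]\setminus A}]$ and a M\"obius inversion over subsets, one finds $e_S=(-1)^{|S|-1}$ when $i\in S$ and $|S|\geq 3$, and $e_S=0$ otherwise. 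Now $\zeta_n$ is a ring homomorphism with $\zeta_n(1-[\mathcal{L}_S^{-1}])=c_S$, hence $\zeta_n([\mathcal{L}_S])=(1-c_S)^{-1}$ and therefore $\zeta_n([\mathbb{L}_i])=\prod_S(1-c_S)^{-e_S}$, so that
\[
\zeta_n(z_i)=1-\prod_S(1-c_S)^{-e_S}.
\]
Thus the lemma for $i<n$ is equivalent to the identity $\prod_S(1-c_S)^{-e_S}=1+\psi_i$ in $A(\overline{\mathcal{M}}_{0,n})$.

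\textbf{The main obstacle.}
Proving this last identity is where the work lies. Tensoring with $\mathbb{Q}$ (harmless, since $A(\overline{\mathcal{M}}_{0,n})$ is torsion-free) and taking formal logarithms, it is equivalent to the family of identities
\[
\sum_S e_S\, c_S^{\,d}=(-1)^{d-1}\psi_i^{\,d}\qquad(d\geq 1),
\]
the case $d=1$ being the defining relation for the $e_S$. To establish these one uses that $c_S^{\,d}=f_{S\cup n}^{*}(\psi_n^{\,d})$ vanishes as soon as $d\geq|S|-1$, since $\dim\overline{\mathcal{M}}_{0,S\cup n}=|S|-2$; hence only the sets $S$ with $i\in S$ and $|S|\geq d+2$ contribute to the left-hand side, and one is reduced to comparing known intersection numbers of $\psi$-classes. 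Equivalently, one may argue in Kapranov's realization of $\overline{\mathcal{M}}_{0,n}$ as an iterated blow-up of $\mathbb{P}^{n-3}$, in which $\psi_n$ is the pullback of the hyperplane class and the $c_S$ are pullback-minus-exceptional classes, turning the identity into a computation in the intersection ring of the blow-up. The extreme case $d=n-3$ already reduces to $e_{[n-1]}=(-1)^{n}$ together with $\int_{\overline{\mathcal{M}}_{0,n}}\psi_i^{\,n-3}=\int_{\overline{\mathcal{M}}_{0,n}}\psi_n^{\,n-3}=1$; the intermediate cases follow in the same way, after bookkeeping with multinomial coefficients. I expect this degree-by-degree verification, rather than the reduction, to be the delicate part of the argument.
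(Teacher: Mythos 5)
Your argument for $i=n$ is correct, and in fact more direct than the paper's: since $\mathcal{L}_{[n-1]}=\mathbb{L}_n$ and $c_{[n-1]}=\psi_n$, Theorem~\ref{thm:mzeron} gives $\zeta_n(1-[\mathbb{L}_n^{-1}])=\psi_n$ immediately, and the algebraic manipulation $z_n=-w/(1-w)$ is sound because $w$ is nilpotent.

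For $i<n$, however, there is a genuine gap. Your reduction is valid: the claim $\zeta_n(z_i)=-\psi_i$ is equivalent to the Chow-ring identity $\prod_S(1-c_S)^{-e_S}=1+\psi_i$, i.e.\ to the family $\sum_S e_S\,c_S^{\,d}=(-1)^{d-1}\psi_i^{\,d}$ for all $d\geq 1$. But this equivalence means you have merely restated the lemma, not proved it. You verify only the cases $d=1$ (which is the definition of the $e_S$) and $d=n-3$ (where $A^{n-3}$ has rank one, so degrees suffice). For $2\leq d\leq n-4$ the required statement is an identity of classes in $A^d(\overline{\mathcal{M}}_{0,n})$, a group of large rank; ``comparing known intersection numbers'' would mean pairing against an entire basis of $A^{n-3-d}$, and the assertion that the intermediate cases ``follow in the same way, after bookkeeping with multinomial coefficients'' is not a proof. (The preliminary claim $e_S=(-1)^{|S|-1}$ for $i\in S$ is also only sketched via an unspecified M\"obius inversion, though that part is more routine.) The paper avoids this computation entirely: it pulls back along the Losev--Manin reduction morphisms $\rho_{n,i}\colon\overline{\mathcal{M}}_{0,n}\to\overline{\mathcal{M}}_{0,w_{n,i}}$, identifies $\overline{\mathcal{M}}_{0,w_{n,i}}$ with a permutohedral variety so that $\rho_{n,i}^*\tau_\varnothing=1-[\mathbb{L}_i]$ and $\rho_{n,i}^*t_\varnothing=-\psi_i$, and then invokes the compatibility of the exceptional isomorphisms with $\rho_{n,i}^*$ together with Lemma~\ref{exceptional FY} ($\zeta_M(\tau_\varnothing)=t_\varnothing$, $\zeta_M(\tau_E)=t_E/(1+t_E)$). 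If you want to complete your route, you would need either to carry out the degree-by-degree verification in full or to find a structural reason for the identity; as written, the heart of the lemma for $i<n$ remains unproved.
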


\begin{proof}
We will consider a family of closely related moduli spaces of curves, called the Losev--Manin spaces \cite{LosevManin}. 
The Losev--Manin space $\overline{\calM}_{0, w_{k, \ell}}$ is a moduli space of stable rational curves with $n$ weighted 
marked points \cite{Hassett}, with weights 1 for the the $k^\text{th}$ and $\ell^\text{th}$ points and 
$\varepsilon \in \Q \cap (0, \frac{1}{n-2})$ for the remaining $n-2$ points. 
Reduction of weights from $(1,\ldots,1)$ to $w_{k, \ell}$ induces a morphism \cite[Theorem 4.1]{Hassett}
$\rho_{k, \ell}\colon \overline{\calM}_{0, n}\to\overline{\calM}_{0, w_{k, \ell}}.$
Furthermore, $\overline{\calM}_{0, w_{k, \ell}}$ is isomorphic to the permutohedral variety $X_{{[n] \smallsetminus \{k, \ell\}}}$
\cite{LosevManin}, so we have an exceptional isomorphism
$$\zeta_{[n] \smallsetminus \{k, \ell\}} \colon K(\overline{\calM}_{0, w_{k, \ell}}) \to A(\overline{\calM}_{0, w_{k, \ell}}).$$ 
For $i\not= n$, consider the classes  $t_E, t_{\emptyset} \in A(\overline{\calM}_{0, w_{i, n}})\cong A(X_{{[n] \smallsetminus \{i, n\}}})$  and $\tau_E, \tau_{\emptyset} \in K(\overline{\calM}_{0, w_{i, n}})\cong K(X_{{[n] \smallsetminus \{i, n\}}})$.
Then we have the following identities:
$$\begin{aligned}
\rho_{n, i}^{\ast}t_E &= -\psi_n  & \quad\quad
\rho_{n, i}^{\ast}\tau_E &= 1 - [\mathbb{L}_n]\\
\rho_{n, i}^{\ast}t_{\emptyset} &= -\psi_i & \quad \quad
\rho_{n, i}^{\ast}\tau_{\emptyset} &= 1 - [\mathbb{L}_i],
\end{aligned}$$
see, for example, \cite[Section 2]{Ross}.
For each $i \ne n$, we have the following diagram
\[
\begin{tikzcd}
    K(\overline{\calM}_{0, w_{n, i}}) \arrow{r}{\zeta_{[n] \smallsetminus \{n, i\}}} \arrow{d}{\rho_{n, i}^{\ast}} & A(\overline{\calM}_{0, w_{n, i}}) \arrow{d}{\rho_{n, i}^{\ast}} \\
    K(\overline{\calM}_{0, n}) \arrow{r}{\zeta_n} & A(\overline{\calM}_{0, n}),
\end{tikzcd}
\] 
whose commutativity can be checked by using that $\rho_{n, i}^* \tau_E = 1 - [\mathbb{L}_n^{-1}]$, and that similar formulas hold for pullbacks of the $n$th cotangent line bundle under forgetful maps. 

The Lemma \ref{exceptional FY} gives that
$$\zeta_{n}(z_i) = \zeta_n(1-[\mathbb{L}_i])
= \zeta_n\circ\rho_{n, i}^{\ast}(\tau_{\emptyset})
= \rho_{n, i}^{\ast}\circ\zeta_{[n] \smallsetminus \{n, i\}}(\tau_{\emptyset})
= \rho_{n, i}^{\ast}(t_{\emptyset})
= -\psi_i,$$
and similarly 
$\zeta_n(z_n) = \rho_{n, i}^{\ast} (\frac{t_E}{1 + t_E}) = \frac{-\psi_n}{1-\psi_n}.$
\end{proof}

\begin{proof}[Proof of Theorem \ref{psi snap}.]
By Lemma \ref{oh snap}, we have
$$\Snap_\mathbb{L}(-\textbf{a}) = \sum_{\textbf{d}\in\mathbb{N}^n}\chi\left(\overline{\calM}_{0, n},\, z_1^{d_1}\cdots z_n^{d_n}\right)\, \textbf{a}^{(\textbf{d})}.$$
(Note that the minus sign comes from the fact that $z_i = 1 - [\mathbb{L}_i]$ rather than $1-[\mathbb{L}_i^{-1}]$.)
By Theorem \ref{thm:mzeron} and Lemma \ref{zeta psi}, we have 
$$\chi\left(\overline{\calM}_{0, n},\, z_1^{d_1}\cdots z_n^{d_n}\right)
= \deg \left (\frac{\zeta_n(z_1^{d_1}\cdots z_n^{d_n})}{1-c_{[n-1]}} \right)
= (-1)^{|\textbf{d}|} \deg\left(\frac{\psi_1^{d_1}\cdots\psi_{n}^{d_{n}}}{(1-\psi_n)^{d_n+1}}\right).$$
Next, we observe that
$$\frac{\psi_1^{d_1}\cdots\psi_{n}^{d_{n}}}{(1-\psi_n)^{d_n+1}} = \psi_1^{d_1}\cdots\psi_{n}^{d_{n}}\sum_{k=0}^\infty (d_n+1)^{(k)} \psi_n^k,$$
and therefore the part of this sum in degree $n-3$ is equal to
$$\psi_1^{d_1}\cdots\psi_{n-1}^{d_{n-1}}\psi_n^{n-3-d_1-\cdots-d_{n-1}}(d_n+1)^{(n-3-|\textbf{d}|)}$$ if $|\textbf{d}|\leq n-3$, and zero otherwise.
By work of Witten \cite{MR1144529}, the degree of this term is equal to
$$\binom{n-3}{d_1,\ldots,d_{n-1},n-3-d_1-\cdots-d_{n-1}}(d_n+1)^{(n-3-|\textbf{d}|)}.$$
Putting it all together, we have
\begin{eqnarray*}
\Snap_\mathbb{L}(-\textbf{a}) &=& \sum_{d\in\mathbb{N}^n}\chi\left(\overline{\calM}_{0, n},\, z_1^{d_1}\cdots z_n^{d_n}\right)\, \textbf{a}^{(\textbf{d})}\\
&=& \sum_{|\textbf{d}|\leq n-3} (-1)^{|\textbf{d}|} \textbf{a}^{(\textbf{d})}\deg\left(\frac{\psi_1^{d_1}\cdots\psi_{n}^{d_{n}}}{(1-\psi_n)^{d_n+1}}\right)\\
&=& \sum_{|\textbf{d}|\leq n-3} (-1)^{|\textbf{d}|} \textbf{a}^{(\textbf{d})}(d_n+1)^{(n-3-|\textbf{d}|)}\binom{n-3}{d_1,\ldots,d_{n-1},n-3-d_1-\cdots-d_{n-1}}\\
&=& \sum_{|\textbf{d}|\leq n-3} (-1)^{|\textbf{d}|} \textbf{a}^{(\textbf{d})}\binom{d_n+n-3-|\textbf{d}|}{n-3-|\textbf{d}|}\binom{n-3}{d_1,\ldots,d_{n-1},n-3-d_1-\cdots-d_{n-1}}\\
&=& \sum_{|\textbf{d}|\leq n-3} (-1)^{|\textbf{d}|} \textbf{a}^{(\textbf{d})}\binom{n-3}{d_1,\ldots,d_{n},n-3-|\textbf{d}|}.
\end{eqnarray*}
Finally, this implies that 
$$\Snap_\mathbb{L}(\textbf{a}) = \sum_{|\textbf{d}|\leq n-3} \binom{a_1}{d_1}\cdots\binom{a_n}{d_n}\binom{n-3}{d_1,\ldots,d_{n},n-3-|\textbf{d}|},$$
which completes the proof.
\end{proof}

\appendix
\section{The simplicial presentation of Chow rings}\label{sec:augChow}
This appendix is devoted to proving the simplicial presentation for the (augmented) Chow ring of a matroid. 
The simplicial generators of $A(M)$ were extensively studied in \cite{BES}, and the surjectivity of $\Phi_{\nabla}$ was proved there. This immediately generalizes to $\Phi_{\nabla}^{\aug}$. That work did not give a simple description of the kernel of $\Phi_{\nabla}$. 

For any pair of flats $F$ and $G$, let $$z_{F,G} \coloneqq \sum_{\substack{F\subset F'\subsetneq F\vee G\\ G\subset G'\subsetneq F\vee G}} x_{F'}x_{G'}\in T_M.$$
For any element $e\in E$ and flat $F\neq \emptyset$, let $$w_{e,F} \coloneqq \sum_{F\subset G \not\ni e} y_ex_{G} 
\in T_M.$$
Recall the definitions of the ideals $\cI_1,\cI_2,\cI_3,\cI_4^\aug\subset T_M$ in Equation \eqref{eq:Idef}.

\begin{lemma}\label{change of coords}
We have the following equalities of ideals in $T_M$:
\begin{eqnarray*}
\cI_3 &=& \left\langle z_{F,G}\mid \text{$F, G$ {\em arbitrary}} \right \rangle\\
\cI_2 + \cI_4^\aug&=& \cI_2 +\langle y_e^2\mid e\in E\rangle +  \left\langle w_{e,F}\mid e\in E,  F\neq \varnothing \right \rangle.
\end{eqnarray*}
\end{lemma}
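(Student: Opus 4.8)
The plan is to prove the two displayed ideal equalities one at a time, in each case by a ``leading term'' argument: I will show that the new generators ($z_{F,G}$, respectively $w_{e,F}$) and the old ones (the generators of $\cI_3$, respectively the relevant part of $\cI_4^\aug$) generate the same ideal, by peeling off the lowest-rank monomial and inducting downward on rank.

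For $\cI_3=\langle z_{F,G}\rangle$, I would first record the lattice fact that whenever $F\subseteq F'\subsetneq F\vee G$ and $G\subseteq G'\subsetneq F\vee G$ are flats, the flats $F'$ and $G'$ are incomparable: if $F'\subseteq G'$, then $G'$ is a flat containing both $F$ and $G$, so $F\vee G\subseteq G'$, contradicting $G'\subsetneq F\vee G$, and symmetrically for $G'\subseteq F'$. This shows immediately that every monomial of $z_{F,G}$ lies in $\cI_3$, hence $\langle z_{F,G}\rangle\subseteq\cI_3$ (and, noting that the indexing set is empty when $F,G$ are comparable, that $z_{F,G}=0$ in that case). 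For the reverse inclusion I would show $x_Fx_G\in\langle z_{\bullet,\bullet}\rangle$ for all incomparable $F,G$ by downward induction on $\rk F+\rk G$: the monomial $x_Fx_G$ occurs in $z_{F,G}$, and $z_{F,G}-x_Fx_G$ is a sum of monomials $x_{F'}x_{G'}$ with $F',G'$ incomparable and, since $(F',G')\neq(F,G)$ forces $F\subsetneq F'$ or $G\subsetneq G'$, with $\rk F'+\rk G'>\rk F+\rk G$; by the inductive hypothesis these lie in $\langle z_{\bullet,\bullet}\rangle$, hence so does $x_Fx_G$. (The top case, where $\rk F+\rk G$ is maximal among incomparable pairs, is precisely the case in which this correction sum is empty.)

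For the second equality I would first reduce modulo $\cI_2$, using the standard elimination-of-variables isomorphism $T_M/\cI_2\cong\Z[x_F\mid F\text{ a flat}]$ under which $y_e$ is identified with $\bar{y}_e:=\sum_{e\notin F}x_F$; since both sides of the claimed equality contain $\cI_2$, it suffices to compare their images in this polynomial ring. Setting $c_{e,F}:=\sum_{F\subseteq G,\,e\notin G}x_G$, one has $c_{e,\varnothing}=\bar{y}_e$, so the image of $\langle y_e^2\rangle$ together with the $w_{e,F}$ for $F\neq\varnothing$ is $J:=\langle\bar{y}_e\,c_{e,F}\mid e\in E,\ F\text{ a flat}\rangle$, while the image of $\cI_4^\aug$ is $\langle\bar{y}_e\,x_F\mid e\notin F\rangle$. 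The inclusion $J\subseteq\langle\bar{y}_e x_F\mid e\notin F\rangle$ is immediate because every monomial of $c_{e,F}$ avoids $e$. For the reverse I would show $\bar{y}_e x_F\in J$ for every flat $F$ with $e\notin F$ by downward induction on $\rk F$: $x_F$ occurs in $c_{e,F}$, and $\bar{y}_e c_{e,F}-\bar{y}_e x_F=\sum_{F\subsetneq G,\,e\notin G}\bar{y}_e x_G$, where each $G$ is a flat of strictly larger rank with $e\notin G$, hence lies in $J$ by induction.

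I do not anticipate a genuine obstacle here; the argument is essentially bookkeeping. The point requiring the most care is to set up the two downward inductions correctly, namely to check that in each case the correction terms are of exactly the type covered by the inductive hypothesis (incomparable pairs of strictly larger rank sum in the first case; flats of strictly larger rank avoiding $e$ in the second), using that a strict inclusion of flats forces a strict inequality of ranks so the induction is well-founded. I would also want to be careful with the degenerate conventions ($z_{F,G}=0$ when $F,G$ are comparable, $c_{e,F}=0$ when $e\in F$) and to state cleanly why $\langle y_e^2\rangle$ is absorbed as the ``$F=\varnothing$'' case of $\langle\bar{y}_e c_{e,F}\rangle$.
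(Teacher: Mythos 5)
Your proof is correct and follows essentially the same route as the paper: the same ``peel off the leading monomial and induct on rank (equivalently, corank) sum'' argument for both equalities. The only cosmetic difference is in the second part, where the paper keeps the $w_{e,F}$ comparison in $T_M$ and disposes of $y_ex_\varnothing$ via the explicit identity $y_e^2-x_\varnothing y_e=y_e\bigl(y_e-\sum_{e\notin F}x_F\bigr)+\sum_{e\notin F\neq\varnothing}y_ex_F$, whereas you pass to $T_M/\cI_2$ first and absorb $y_e^2$ uniformly as the $F=\varnothing$ member of your family $\bar y_e c_{e,F}$ --- the same computation in a slightly tidier package.
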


\begin{proof}
We begin with the first statement.
Every term in $z_{F,G}$ is a product of incomparable flats, thus $z_{F,G}$ is contained in $\cI_3$.
The opposite inclusion follows by induction on sum of the coranks of $F$ and $G$, using the observation that, whenever $F$ and $G$ are incomparable,
$$z_{F,G} = x_F x_G + \text{terms with lower sum of coranks.}$$

For the second statement, we write $$\cI_4^\aug = \langle y_e x_\varnothing \mid e\in E\rangle + \langle y_e x_F \mid e\notin F\neq\varnothing\rangle.$$
A similar argument to the one above shows that 
$$\langle y_e x_F \mid e\notin F\neq\varnothing\rangle = \left\langle w_{e,F}\mid e\in E,  F\neq \varnothing \right \rangle,$$
so it remains only to show that $y_e x_\varnothing$ is congruent to $y_e^2$ modulo the ideal $\cI_2 +\left\langle w_{e,F}\mid e\in E,  F\neq \varnothing \right \rangle$.
Indeed, we have $$y_e^2 - x_\varnothing y_e = y_e\left( y_e - \sum_{e\notin F} x_F\right) + \sum_{e\notin F\neq \varnothing} y_e x_F,$$
which completes the proof.
\end{proof}

Recall that we have defined the homomorphism $\Phsim^\aug\colon S_M\to A^\aug(M)$ by the formula
$$\Phsim^\aug(u_F) \coloneqq -\sum_{F\subset G}\PhFY(x_G).$$
It is easy to see that the kernel of the map $A^{\aug}(M) \to A(M)$ is generated by $\langle h_F : \operatorname{rk} F = 1\rangle$, so the following theorem implies Theorem~\ref{Amatroid}. 

\begin{theorem}\label{thm:augmentedsimplicial}
The map $\Phsim^\aug$ is surjective with kernel $\mathcal{J}_1 + \mathcal{J}_2^{\aug}$.
\end{theorem}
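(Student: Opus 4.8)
The plan is to transport the Feichtner--Yuzvinsky presentation of $A^\aug(M)$ (Theorem~\ref{Amatroid}(2)) to the simplicial variables by an explicit M\"obius-inversion change of coordinates. Let $\Xi_0 \colon S_M \to T_M$ be the ring map $u_F \mapsto -\sum_{F \subseteq G} x_G$; then $\PhFY^\aug \circ \Xi_0 = \Phsim^\aug$ by the definition of $\Phsim^\aug$. Going the other way, introduce the ring map $\psi \colon T_M \to S_M$ with
$$\psi(x_F) = -\sum_{\substack{G \ne \varnothing \\ F \subseteq G}} \mu(F,G)\, u_G \and \psi(y_e) = u_{\cl(e)},$$
where $\cl(e)$ is the rank-one flat through $e$ (recall $M$ is loopless). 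Standard M\"obius inversion shows that $\psi \circ \Xi_0 = \mathrm{id}_{S_M}$, that $\psi$ annihilates $\cI_1$ and $\cI_2$ (the latter using that $\sum_{F \subseteq H,\, e \notin F} \mu(F,H)$ vanishes unless $H = \varnothing$ or $H = \cl(e)$), and hence that $\psi$ is surjective.

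Surjectivity of $\Phsim^\aug$ is then immediate: $A^\aug(M)$ is generated by the classes $t_F^\aug$, and for $F \ne \varnothing$ M\"obius inversion writes $t_F^\aug$ as an integer combination of the $h_G^\aug = \Phsim^\aug(u_G)$, while $t_\varnothing^\aug = -\sum_{F \ne \varnothing} t_F^\aug$ by $\cI_1$. For the inclusion $\cJ_1 + \cJ_2^\aug \subseteq \ker \Phsim^\aug$, I would first record the identity $h_{\cl(e)}^\aug = t_e^\aug$, which follows from $h_{\cl(e)}^\aug = -\sum_{e \in G} t_G^\aug$ together with $\cI_1$ and the relation $t_e^\aug = \sum_{e \notin G} t_G^\aug$ coming from $\cI_2$. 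Then $(t_e^\aug)^2 = 0$ and $t_e^\aug t_G^\aug = 0$ for $e \notin G$ by $\cI_4^\aug$, which kills the images of the generators $u_F^2$ and $u_F(u_G - u_{F \vee G})$ of $\cJ_2^\aug$ with $\rk F = 1$. And $\Phsim^\aug$ sends the generator $(u_F - u_{F \vee G})(u_G - u_{F \vee G})$ of $\cJ_1$ to a sum of products $t_H^\aug t_{H'}^\aug$ with $H \supseteq F$, $H' \supseteq G$, and $F \vee G$ contained in neither $H$ nor $H'$; any such $H, H'$ are incomparable, so each product vanishes in $A^\aug(M)$.

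For the reverse inclusion, suppose $\Phsim^\aug(p) = 0$. Then $\PhFY^\aug(\Xi_0(p)) = 0$, so $\Xi_0(p) \in \cI_1 + \cI_2 + \cI_3 + \cI_4^\aug$ by Theorem~\ref{Amatroid}(2); applying $\psi$ and using $\psi \circ \Xi_0 = \mathrm{id}$ and $\psi(\cI_1) = \psi(\cI_2) = 0$ gives $p \in \psi(\cI_3) + \psi(\cI_4^\aug)$. By Lemma~\ref{change of coords}, $\cI_4^\aug \subseteq \cI_2 + \langle y_e^2 \rangle + \langle w_{e,F}\rangle$, and a short computation using $\psi\bigl(\sum_{F \subseteq G} x_G\bigr) = -u_F$ gives $\psi(y_e^2) = u_{\cl(e)}^2$ and $\psi(w_{e,F}) = u_{\cl(e)}\bigl(u_{\cl(e) \vee F} - u_F\bigr)$, both in $\cJ_2^\aug$; hence $\psi(\cI_4^\aug) \subseteq \cJ_2^\aug$. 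It remains to show $\psi(\cI_3) \subseteq \cJ_1$, equivalently that $\gamma_F \gamma_G \in \cJ_1$ for incomparable flats $F, G$, where $\gamma_F := \psi(x_F)$. I would prove this working modulo $\cJ_1$: rewrite the defining relation as $u_P u_Q \equiv u_{P \vee Q} u_P + u_{P \vee Q} u_Q - u_{P \vee Q}^2$, substitute into $\gamma_F \gamma_G = \sum_{P \supseteq F,\, Q \supseteq G} \mu(F,P)\mu(G,Q)\, u_P u_Q$, and then collapse each of the three resulting double sums by M\"obius inversion via the identity $u_K = -\sum_{R \supseteq K} \gamma_R$ (in the third sum, applied to both factors of $u_{P \vee Q}^2$). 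The first two sums vanish because $F \not\subseteq G$ and $G \not\subseteq F$; the third vanishes because $F \ne G$ forces the meet $R_1 \wedge R_2$ of the indices appearing there to differ from at least one of $F, G$. This final M\"obius computation is the main obstacle; the rest is bookkeeping.
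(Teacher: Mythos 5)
Your proof is correct, and its overall strategy coincides with the paper's: both transport the Feichtner--Yuzvinsky presentation of $A^{\aug}(M)$ through the unipotent change of variables $u_F \mapsto -\sum_{F \subset G} x_G$, using M\"obius inversion to go back. The genuine difference is in how the step matching $\cI_3$ with $\cJ_1$ is organized. The paper first replaces the generators $x_F x_G$ of $\cI_3$ by the elements $z_{F,G}$ (the first half of Lemma~\ref{change of coords}) and then recognizes each $z_{F,G}$, modulo $\cI_1+\cI_2$, as the image of a generator of $\cJ_1$; you instead keep the generators $x_F x_G$ and push them forward under your explicit inverse $\psi$, checking directly that $\gamma_F\gamma_G \in \cJ_1$ for incomparable $F,G$. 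I verified that your final M\"obius computation closes: after substituting $u_Pu_Q \equiv u_{P\vee Q}u_P + u_{P\vee Q}u_Q - u_{P\vee Q}^2$ and using $u_K = -\sum_{K \subset R}\gamma_R$, the first double sum collapses to $-\gamma_G\sum_{F\subset P\subset G}\mu(F,P)\,u_P$, which is empty because $F\not\subset G$ (symmetrically for the second), and in the third the coefficient of $\gamma_{R_1}\gamma_{R_2}$ is $-\delta_{F,\,R_1\wedge R_2}\,\delta_{G,\,R_1\wedge R_2}=0$ because $F\neq G$; so the step you flagged as the main obstacle does go through. The trade-off: your route requires writing down $\psi$ explicitly (including $\psi(y_e)=u_{\cl(e)}$ and the checks $\psi(\cI_1)=\psi(\cI_2)=0$) and a separate verification of the forward inclusion $\cJ_1+\cJ_2^{\aug}\subseteq\ker\Phsim^{\aug}$, which the paper obtains simultaneously with the reverse inclusion by identifying generating sets under the induced isomorphism $S_M \to T_M/(\cI_1+\cI_2)$; in exchange, you avoid the first half of Lemma~\ref{change of coords} entirely and make the inverse change of coordinates fully explicit.
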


\begin{proof}
Consider the homomorphism $\Theta\colon S_M\to T_M$ defined by the formula $$\Theta(u_F) \coloneqq -\sum_{F\subset G} x_G$$
for all nonempty flats $F$, so that $\Phsim^\aug = \PhFY^\aug\circ\Theta$.
The image of $\Theta$ is equal to the subring $\Z[x_F\mid \text{$F$ a nonempty flat}] \subset T_M$.  In particular, the composition $$\bar\Theta\colon S_M\to T_M \to T_M/(\cI_1+\cI_2)$$ is
an isomorphism. This implies that $\Phsim^\aug$ is surjective, 
and its kernel is equal to $$\Theta^{-1}(\cI_1 + \cI_2 + \cI_3 + \cI_4^\aug).$$

For the remainder of the proof, we put a bar over an element of $T_M$ or an ideal in $T_M$ to denote its image in $T_M/(\cI_1+\cI_2)$.
Then we need to compute $\bar{\Theta}^{-1}(\bar\cI_3 + \bar\cI_4^\aug)$.
By Lemma \ref{change of coords}, we may rewrite this as 
$$\bar{\Theta}^{-1}\Big(\left\langle \bar z_{F,G}\mid \text{$F, G$ arbitrary} \right \rangle 
+ \langle \bar y_e^2\mid e\in E\rangle +  \left\langle w_{e,F}\mid e\in E,  F\neq \varnothing \right \rangle\Big).$$
We have the following equalities:
\begin{eqnarray*} \bar z_{F,G} &=&  \bar\Theta\Big((u_F - u_{F\vee G})(u_G - u_{F\vee G})\Big),\\
\bar y_e^2 &=& \bar\Theta(u_{\bar e}^2),\\
\bar w_{e,F} &=& \bar\Theta\Big(u_{\bar{e}}(u_{\bar{e}\vee F}-u_F)\Big),
\end{eqnarray*}
which together imply that the kernel of $\Phsim^\aug$ is equal to
$$\cJ_1 + \langle u_{\bar e}^2\mid e\in E\rangle + \langle u_{\bar{e}}(u_{\bar{e}\vee F}-u_F) \mid e\in E, F\neq \varnothing\rangle.$$
The theorem now follows from the fact that the second and third summand above add to $\cJ_2^\aug$.
\end{proof}

\bibliographystyle{alpha}
\bibliography{wondk.bib}
\end{document}